 \newtheorem{theorem}{Theorem}[section]
 \newtheorem{cor}[theorem]{Corollary}
 \newtheorem{lemma}[theorem]{Lemma}
 \newtheorem{proposition}[theorem]{Proposition}
 \newtheorem{definition}[theorem]{Definition}
 \numberwithin{equation}{section}
\newcommand{\ben}{\begin{equation}}
\newcommand{\een}{\end{equation}}
\newcommand{\integer}{\ensuremath{{\mathbb Z}}}
\newcommand{\real}{\ensuremath{{\mathbb R}}}
\newcommand{\rational}{\ensuremath{{\mathbb Q}}}
\newcommand{\Pp}{\ensuremath{{\mathbb P}}}
\newcommand{\Pf}{\ensuremath{{\mathfrak P}}}
\newcommand{\Aa}{{\mathcal A}}
\newcommand{\DD}{{\mathcal D}}
\newcommand{\PP}{{\mathcal P}}
\newcommand{\BB}{{\mathcal B}}
\newcommand{\KK}{{\mathcal K}}
\newcommand{\ZZ}{{\mathcal Z}}
\newcommand{\LL}{\mathcal{L}}
\newcommand{\Hom}{\mathrm{Hom}}
\newcommand{\Ext}{\mathrm{Ext}}
\newcommand{\Tor}{\mathrm{Tor}}
\newcommand{\Loop}{\mathsf{L}}
\newcommand{\To}{\longrightarrow}
\newcommand{\fiberprod}[2]{\: {}_{#1}  \! \times_{#2}}
\newcommand{\gr}{\mathfrak}
\newcommand{\HHom}{\ensuremath{\mathcal{H}om}}
\newcommand{\RHHom}{\ensuremath{\mathcal{RH}om}}
\newcommand{\lineB}{\ensuremath{\overline{B}}}
\newcommand{\Aut}{\ensuremath{{\mathrm{Aut}}}}
\title[Hochschild cohomology and string topology]{Hochschild cohomology and string topology of global quotient orbifolds}
\author{Andr\'es \'Angel, Erik Backelin and Bernardo Uribe}
\begin{document}
\maketitle

\begin{abstract}
Let $M$ be a connected, simply connected, closed  and oriented manifold,
and $G$ a finite group acting on $M$ by orientation preserving diffeomorphisms.
 In this paper we show an explicit ring isomorphism between the orbifold string topology
 of the orbifold $[M/G]$ and the Hochschild cohomology of the dg-ring obtained by performing
  the smash product between the group $G$ and the singular cochain complex of $M$.
\end{abstract}

\section{Introduction}
String topology stands for the study of the topological properties
associated to the space of smooth  free loops $\LL M$ on a closed
and oriented manifold $M$ of dimension $d$.

The starting point of string topology was the paper
\citep{ChasSullivan} by Chas and Sullivan where the authors
discovered an intersection product in the homology of the free
loop space $$H_p(\LL M) \otimes H_q(\LL M)  \to H_{p+q-d}(\LL
M),$$ having total degree $-d$, which together with the degree 1
operator $H_*(\LL M) \to H_{*+1}(\LL M)$ induced by the circle
action on the loops, endowed the homology of the free loop space with the structure of
a Batalin-Vilkovisky algebra.

Cohen and Jones \citep{CohenJones} developed a homotopical
theoretic realization of string topology, by endowing the Thom
spectrum $\LL M^{-TM}$ with the structure of a ring spectrum
$$\LL M^{-TM} \wedge \LL M^{-TM} \to \LL M^{-TM},$$
that allowed them to show that at the level of homology, the
intersection product of Chas and Sullivan can be recovered by the
product in homology induced by the ring spectrum $\LL M^{-TM}$. In
the same paper Cohen and Jones furthermore showed, generalizing
results of Jones \citep{Jones}, that in the case when $M$ is simply
connected, there is a ring isomorphism between the homology of the
ring spectrum $\LL M^{-TM}$ and the Hochschild cohomology of the
singular cochains of $M$
$$H_*(\LL M^{-TM}) \cong HH^*(C^*(M),C^*(M)).$$

In the case of global quotient orbifolds of the form $[M/G]$ for
$G$ finite group, Lupercio and the third author
\citep{LupercioUribe} constructed the loop groupoid $[P_GM/G]$ as
the natural free loop space of the orbifold, whose homotopic
quotient turns out to be homotopy equivalent to the space of free
loops of the homotopy quotient $M\times_G EG$, i.e.
$$P_GM \times_G EG \simeq \LL(M\times_GEG);$$
  and with this equivalence at hand, Lupercio, Xicot\'encatl and the third author
   \citep{LUX} showed that the homology of the free loop space of the orbifold
$$H_*(\LL(M\times_G EG);\rational)\cong H_*(P_GM ;\rational)^G$$
could also be endowed with the structure of a Batalin-Vilkovisky
algebra; the authors coined this  structure with the name  {\em
orbifold string topology}.

But,  can the orbifold string topology be defined over the
homology with integer coefficients? And, is there any relation
between the orbifold string topology ring and the Hochschild
cohomology of some specific dg-ring? This paper is devoted to
positively answer these two questions.

Let us start with a brief description of the answer of the second
question, as its solution leads the way to solve the first. From
the isomorphism showed by Cohen and Jones between the string
topology ring of a manifold and the Hochschild cohomology of the
singular cochains, one is tempted to try to show that the orbifold
string ring should be isomorphic to the ring
$$HH^*(C^*(M\times_G EG), C^*(M \times_G EG));$$
but due to convergence issues (the Eilenberg-Moore spectral
sequence does not converge in general \citep{Dwyer} ), one cannot
show using standard cosimplicial methods that indeed this ring
recovers the orbifold string ring, and even worse, in some cases we
show (see section \ref{Morita}) that this ring does not
give the appropriate ring structure on the homology of the free loop
space of $[M/G]$. Instead we consider the dg-ring
$C^*(M) \#G$ defined as the smash product of $G$ with the singular
cochains $C^*(M) = C^*(M; \integer)$ of $M$, and we compare its
Hochschild cohomology with the homology ring of the ring spectrum
$P_GM^{-TM}$ that was constructed in \citep{LUX}.

In the case that $M$ is simply connected and connected we find
that that the orbifold string topology ring can be recovered as
the Hochschild cohomology ring of $C^*(M;\rational) \#G$, i.e.
there is an isomorphism of rings
$$HH^*(C^*(M;\rational) \# G, C^*(M; \rational) \#G) \cong H_*(P_GM^{-TM};\rational)^G.$$

This isomorphism is obtained by carefully decomposing the
Hochschild cohomology ring into smaller parts, which leads to the
ring isomorphism
$$HH^*(C^*(M) \# G, C^*(M); \#G) \cong \Ext^*_{\integer G} (\integer, C_*(P_GM^{-TM})).$$
With the previous isomorphism at hand, it was clear that in order
to get a topological counterpart to the Hochschild cohomology of
$C^*(M) \#G$, it was necessary to introduce some sort of Poincar\'e
dual to the universal principal $G$ bundle $EG$. Fortunately the
spaces $EG$ can be approximated by finite dimensional manifolds
$EG_n$ with free $G$ actions, which together with the S-duality
identification $$C^*(EG_n) \simeq C_{-*}(EG_n^{-TEG_n})$$ allow us to
construct a pro-ring spectrum whose homology
$$H^{pro}_*(\LL(M\times_G EG)^{-T(M\times_G EG)}) := \lim_{{\leftarrow}{n}} H_*((P_GM \times_G EG_n)^{-e_0^*T(M \times_G EG_n)})$$ turned out to be isomorphic to the Hochschild cohomology of $C^*(M) \#G$
$$HH^*( C^*(M) \#G,C^*(M) \#G) \cong H^{\rm pro}_*\left(\LL(M\times_G EG)^{-T(M\times_G EG)}\right).$$
This is the main theorem of the paper (Theorem \ref{main
theorem}).

Because of this last isomorphisms we define the orbifold string
topology ring with integer coefficients  to be the ring
$$H^{\rm pro}_*\left(\LL(M\times_G EG)^{-T(M\times_G EG)}\right),$$
and in this way we answer also the first question.

The first five sections are devoted to show the main theorem
(Theorem \ref{main theorem}). After that we have some simple applications
of the main theorem and in section \ref{Morita} we
explain why Hochschild cohomology is not preserved under groupoid
equivalence.

When we started this project we found that the literature about
Hochschild cohomology of dg-rings was a little disperse, it tended
to either be based on formulas whose categorical meaning were not
properly explained or gave much more sophisticated expositions
than was required for our purposes. Therefore we decided to give a
detailed and elementary description of the homological aspects of
Hochschild cohomology that both describe the concrete formulas
needed to compute things as well as the interpretation of
Hochschild cohomology as Ext-groups in the derived category of
dg-modules. This also clarifies the relationships between
algebraic constructions associated to $C^*(M) \#G$ and topological
constructions that deal with free loop spaces.

The layout of the paper is as follows. In
section \ref{section HH for dg-rings} we consider the smash product of
 a discrete group $G$ with a dg-ring $\Aa$ and we give an explicit
resolution of $\Aa \#G$ as a $\Aa \#G^e$-module, that allows us
to decompose the complex that produces the Hochschild cohomology
of $\Aa \#G$ into the composition of two functors
$$ \RHHom_{\integer G}(\integer, \RHHom_{\Aa^e}(\Aa,\Aa \#G)).$$
In section \ref{section loops quotient} we construct cosimplicial
spaces $\Pp_gM$ whose total spaces realize the orbifold loops
$P_gM$, and with this identification at hand we construct a quasi-isomorphism
$$C^*(M) \stackrel{L}{\otimes}_{C^*(M)^e} C^*(M) \#G \simeq C^*(P_GM).$$
In section \ref{section OST} we construct cosimplicial spectra
$\Pf_gM$ whose total spectra realize the spectra $P_gM^{-TM}$, and
that moreover permits to construct a quasi-isomorphism
$$\HHom_{C^*(M)^e}(B(C^*(M)), C^*(M) \#G) \simeq C_*(P_GM^{-TM})$$
where $B(C^*(M))$ denotes the Bar construction of $C^*(M)$.
And in section \ref{section homotopical} we prove the main
theorem of the paper (Theorem \ref{main theorem}) by constructing
a pro-ring spectrum for the orbifold $[M/G]$ whose homology ring is
isomorphic to the Hochschild cohomology of $C^*(M) \#G$.

In section \ref{Applications} we show some applications of the
main theorem, and in section \ref{Morita} we show why the Hochschild cohomology
is not an invariant of the orbifold by presenting two equivalent groupoids with different Hochschild cohomologies.
We finish with three Appendices,     in section \ref{section
derived} we give the preliminaries on derived categories of
dg-modules over dg-rings and we describe several equivalent ways
on which the Hochschild cohomology of a dg-ring  $\Aa$ can be defined.
Section \ref{Appendix B} is devoted to explain the sign notations that
are used in the Bar construction, and section \ref{Appendix C} is devoted
to show that all the constructions performed in this paper for dg-rings that
are free with respect to $\integer$, can be applied to the dg-ring of singular
 cochains $C^*(M)$ on a manifold which in general is not free over $\integer$.
  This second Appendix puts in solid grounds the results of sections \ref{section OST} and \ref{section homotopical}.

\vspace{0.2cm}

 {\it Acknowledgments.} The authors thank the hospitality of the Max Planck Institut f\"ur Mathematik in Bonn, where part of this paper was carried out. The first author would like to thank the support of the Hausdorff Center for Mathematics in Bonn. The third author would like to thank Prof.
Dr. Wolfgang L\"uck for his support at the final stage of this project.

\section{Hochschild cohomology for the smash product of a group and a dg-ring} \label{section HH for dg-rings}

In this section we will describe how to calculate the Hochschild
cohomology for the smash product of a group and a dg-ring.
 The main construction of this section consists of an
explicit cofibrant replacement from which the results that we
claim follow clearly, and which gives an alternative explanation of
the results of \citep{Sanada} over the Hochschild cohomology of
crossed products over commutative rings.

In this section $G$ is a finitely generated discrete group.

Let $\Aa = (A,d_\Aa)$ be a dg-ring together with a group
homomorphism $\sigma: G \to \Aut_{dg-Rings}(A)$, we write $ga :=
g(a) := \sigma(g)(a)$, for $a \in A$, $g \in G$. We refer to such
an $\Aa$ as a $G$-module dg-ring (this name is a dg-analog of the
term module-algebra.)

\begin{definition}
Let $\Aa$ be a $G$-module dg-ring. The smash product gives a
dg-ring that is a principal object of study in this paper
$$\Aa \# G : = \Aa \otimes_\integer \integer G$$
with multiplication given on generators by
$$(x \otimes g)(y \otimes h) := xg(y) \otimes gh$$
and the differential is $d_\Aa \otimes 1$.
\end{definition}

Note that $\Aa$ is a sub-dg-ring of $\Aa \# G$ by the map
$x\mapsto x \otimes 1_G$, and  $\integer G$ is also a sub-dg-ring
of $\Aa \# G$ via the map $g\mapsto 1_\Aa \otimes g$.

In this section we will construct an explicit cofibrant
replacement for $\Aa \# G$ as a $(\Aa \#G)^e$-module, that
together with an explicit diagonal map, will be the main tools to
show that if we consider $\Aa \# G$ as a $G$-module-dg-ring by the
action $g\cdot (x \otimes h) \mapsto g(x) \otimes g h g^{-1}$,
then we have that
\begin{theorem} \label{theorem decomposition HH} There are isomorphisms of graded groups
$$HH_*((\Aa \#G,\Aa \#G) \cong \Tor^*_{\integer G}(\integer,
\Aa \stackrel{L}{\otimes}_{\Aa^e} \Aa \#G)$$
$$HH^*(\Aa \#G,\Aa \#G) \cong \Ext^*_{\integer G}(\integer,
\RHHom_{\Aa^e}(\Aa,\Aa \#G))$$ defined in an appropriate way such
that the second isomorphism becomes one of graded rings.
\end{theorem}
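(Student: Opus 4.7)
The plan is to recognize $\Aa \# G$, viewed as an $(\Aa \# G)^e$-module, as an induced module from the smaller subring $\Aa^e \# \Delta G$, where $\Delta G \subset G \times G$ is the diagonal. I would first identify the ring $(\Aa \# G)^e$ with $\Aa^e \# (G \times G)$, where $G \times G$ acts on $\Aa^e$ by $(g,h) \cdot (a \otimes b) = g(a) \otimes h(b)$, via the obvious embedding of $\Aa^e$ together with $g \mapsto (1 \otimes g) \otimes (1 \otimes g^{-1})^{\mathrm{op}}$ for $\Delta G$. Using this, I would establish the key isomorphism
\[
\Aa \# G \;\cong\; (\Aa \# G)^e \otimes_{\Aa^e \# \Delta G} \Aa
\]
of $(\Aa \# G)^e$-modules, where on the right $\Aa$ carries its tautological $\Aa^e$-bimodule structure and $\Delta G$ acts through $\sigma$. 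Checking this over the coset representatives $\{(g,1) : g \in G\}$ of $\Delta G$ in $G \times G$ reduces to a direct verification on generators.

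With this identification in hand, base-change yields natural derived isomorphisms
\[
\RHHom_{(\Aa \# G)^e}(\Aa \# G, M) \;\cong\; \RHHom_{\Aa^e \# \Delta G}(\Aa, M), \qquad M \stackrel{L}{\otimes}_{(\Aa \# G)^e} (\Aa \# G) \;\cong\; \Aa \stackrel{L}{\otimes}_{\Aa^e \# \Delta G} M,
\]
while a direct unfolding at the underived level produces
\[
\Hom_{\Aa^e \# \Delta G}(\Aa, M) \,=\, \Hom_{\integer G}(\integer, \Hom_{\Aa^e}(\Aa, M)), \quad \Aa \otimes_{\Aa^e \# \Delta G} M \,=\, \integer \otimes_{\integer G}(\Aa \otimes_{\Aa^e} M).
\]
To promote these to the derived statement of the theorem, and to exhibit the explicit cofibrant replacement advertised in the section opening, I would combine the reduced Bar resolution $B_\bullet(\Aa) \to \Aa$ over $\Aa^e$ (which inherits a compatible $\Delta G$-action by functoriality of the Bar construction) with a projective resolution $F_\bullet \to \integer$ of $\integer$ over $\integer G$, form the bicomplex $B_\bullet(\Aa) \otimes_\integer F_\bullet$ with its natural $\Aa^e \# \Delta G$-module structure, totalize, and induce up to produce a cofibrant resolution of $\Aa \# G$ over $(\Aa \# G)^e$. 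Applying $\Hom_{(\Aa \# G)^e}(-, \Aa \# G)$ (respectively $(\Aa \# G) \otimes_{(\Aa \# G)^e} -$) to this resolution collapses to the iterated Hom (respectively tensor) complex computing the right-hand sides in the theorem.

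For the multiplicative statement I would trace the Yoneda/cup product through the chain of identifications. A $\Delta G$-equivariant Alexander--Whitney-type diagonal on $B_\bullet(\Aa)$, combined with the standard coproduct on $F_\bullet$, produces a diagonal on the bicomplex resolution whose induction yields the cup product on $HH^*(\Aa \# G, \Aa \# G)$; under the identifications this matches the cup product on $\Ext^*_{\integer G}(\integer, -)$ paired with the Yoneda product on $\RHHom_{\Aa^e}(\Aa, \Aa \# G)$ inherited from the algebra structure of $\Aa \# G$. The main obstacles I anticipate are (i) making the induction isomorphism $\Aa \# G \cong (\Aa \# G)^e \otimes_{\Aa^e \# \Delta G} \Aa$ rigorous as $(\Aa \# G)^e$-modules, where the twisted embedding of $\Delta G$ must be handled with care so that the two actions really descend compatibly, and (ii) the $\Delta G$-equivariance and sign bookkeeping required to verify that the combined diagonal is $\Aa^e \# \Delta G$-linear up to chain homotopy and that it induces the correct multiplicative structure after induction.
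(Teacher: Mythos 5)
Your proposal is correct and, at bottom, produces the same resolution as the paper, but it is organized around a different key lemma. The paper constructs the cofibrant replacement $B(\Aa)\otimes_\integer \lineB(\integer G)\to \Aa\#G$ directly over $(\Aa\#G)^e$, writes out its module structure by hand, proves cofibrancy via an explicit filtration, and only afterwards ``factors'' the resulting Hom- and tensor-complexes through the subrings $\Aa^e$ and $\integer G^e$, passing to the sub-resolution $\lineB_G(\integer)$ of $\integer$ (which is exactly a free $\integer G$-resolution via the diagonal embedding $g\mapsto g\otimes g^{-1}$). You instead begin with the structural statement that $(\Aa\#G)^e\cong \Aa^e\#(G\times G)$ and that $\Aa\#G$ is induced from the $\Aa^e\#\Delta G$-module $\Aa$, so that Shapiro's lemma does the factoring for you, and the cofibrant object $B(\Aa)\otimes_\integer F_\bullet$ over $\Aa^e\#\Delta G$ only then needs to be induced up. What your route buys is conceptual transparency (cofibrancy and the Hom/tensor adjunctions become formal, since $(\Aa\#G)^e$ is free over $\Aa^e\#\Delta G$ on the cosets $(g,1)$); what the paper's route buys is the completely explicit formulas for the resolution, its diagonal, and hence the product formula (\ref{product in HomGHomA2}), which are used verbatim in the later topological sections. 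Your treatment of the ring structure --- the Alexander--Whitney diagonal on $B(\Aa)$ juxtaposed with the coproduct on the $\integer G$-resolution, yielding the cup product on $\Ext^*_{\integer G}(\integer,-)$ with coefficients in the $G$-module dg-ring $\RHHom_{\Aa^e}(\Aa,\Aa\#G)$ --- agrees with the paper's; the only points you should not leave implicit are the ones you already flag, plus the standing hypothesis (handled in the paper's Appendix C) that $\Aa$ be free over $\integer$ so that $B(\Aa)$ is genuinely cofibrant.
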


The explicit ring structure on the right hand side of the second
isomorphism will be explained in the proof of the theorem, and it is further emphasized in sections \ref{subsubsection B(G)} and \ref{subsubsection Hom_A2}. This
ring structure will be of use when we will study the string
topology for orbifolds and its relation with Hochschild cohomology
of the dg-rings of singular cochains.

\subsection{Cofibrant replacement for $\Aa \# G$}

Let us consider the cofibrant replacements constructed via the bar
construction (see section \ref{section bar}) for the dg-ring $\Aa$
as an $\Aa^e$-module and for the ring $\integer G$ as a $\integer
G^e$-module
$$B(\Aa) \stackrel{\epsilon}{\to} \Aa   \ \ \ \ \ \ \ B(\integer G)
\stackrel{}{\to} \integer G.$$

If we consider the isomorphism \begin{eqnarray*}(\integer G)
^{k+2} & \to & (\integer G)
^{k+2}\\
(g_0|g_1| \dots |g_{k+1}) & \mapsto & (g_0|g_0g_1|g_0g_1g_2| \dots
|g_0 \dots g_kg_{k+1})\end{eqnarray*} then by transportation of
structures we can change $B(\integer G)$ by an alternative
cofibrant replacement $\lineB(\integer G)$ of $\integer G$ defined
as follows: as a $\integer$-graded module we have that
$$\lineB(\integer G) = \bigoplus_{k=0}^\infty
(\integer G)^{k+2}[k]$$ the differential becomes
$$\delta(h_0| \dots |h_{k+1}) = \sum_{j=0}^k (-1)^j (h_0| \dots |
\widehat{h_j} | \dots | h_{k+1}),$$ the $\integer G^e$-module
structure is
$$(g \otimes k) (h_0| \dots |h_{k+1}) = (gh_0|gh_1|gh_2| \dots
|gh_{k+1}k)$$ and the $\integer G^e$-module homomorphism
$\bar{\epsilon} : \lineB(\integer G) \to \integer G$ is
$$\bar{\epsilon}(h_0|h_1) = h_1 \ \ \ {\rm{and}} \ \ \
\bar{\epsilon}(h_0| \dots | h_{k+1})=0  \ \mbox{for} \ k >1.$$

Notice that for $\lineB(\integer G)$ the diagonal map defined in
(\ref{diagonal map bar}) becomes
\begin{eqnarray} \label{diagonal map lineBarG}
\lineB(\integer G) & \to & \lineB(\integer G) \otimes_{\integer G}
\lineB(\integer G) \\
(h_0| \dots |h_{k+1}) & \mapsto & \sum_{j=0}^k (h_0 | \dots |h_j
|1) \otimes_{\integer G} (h_j|h_{j+1}| \dots | h_{k+1}). \nonumber
\end{eqnarray}

\begin{lemma}
The tensor product $B(\Aa) \otimes_\integer \lineB(\integer G)$
can be endowed with the structure of a $\Aa \# G^e$-module
structure thus making
$$\epsilon \otimes \bar{\epsilon} : B(\Aa) \otimes_\integer
\lineB(\integer G) \to \Aa \# G$$ a cofibrant replacement for $\Aa
\#G$ as an $\Aa \# G^e$-module.
\end{lemma}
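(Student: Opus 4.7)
The plan is to prove the three assertions in order: defining the $\Aa \# G^e$-module structure on $B(\Aa) \otimes_\integer \lineB(\integer G)$, showing that $\epsilon \otimes \bar\epsilon$ is a quasi-isomorphism, and exhibiting semi-freeness (hence cofibrancy) via a bar-length filtration.

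First I would write down the module structure explicitly. Using the diagonal $G$-action on $B(\Aa)$ induced from $\sigma$, together with the existing $\integer G^e$-structure on $\lineB(\integer G)$, I would set
\begin{align*}
(a \otimes g) \cdot \bigl[(a_0|\cdots|a_{n+1}) \otimes (h_0|\cdots|h_{m+1})\bigr] &:= \bigl(a\, g(a_0) \mid g(a_1) \mid \cdots \mid g(a_{n+1})\bigr) \otimes (gh_0 \mid \cdots \mid gh_{m+1}), \\
\bigl[(a_0|\cdots|a_{n+1}) \otimes (h_0|\cdots|h_{m+1})\bigr] \cdot (b \otimes k) &:= \bigl(a_0 \mid \cdots \mid a_n \mid a_{n+1}\, h_{m+1}(b)\bigr) \otimes (h_0 \mid \cdots \mid h_m \mid h_{m+1} k).
\end{align*}
The mnemonic is that in $\Aa \# G$ one has $g\, a = g(a)\, g$, so pushing a left-acting $g$ past the $\Aa$-bars twists each slot by $\sigma(g)$; symmetrically, moving a right-acting $b$ inside past the outermost $G$-element $h_{m+1}$ twists it by $\sigma(h_{m+1})$. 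A direct verification on the four generators $(a \otimes 1), (1 \otimes g), (b \otimes 1), (1 \otimes k)$ shows the left and right actions commute and that the total differential is $\Aa \# G^e$-linear. Since $\epsilon$ and $\bar\epsilon$ are equivariant with values in $\Aa$ and $\integer G$ respectively, their tensor product lands in $\Aa \otimes \integer G = \Aa \# G$ and is an $\Aa \# G^e$-linear map.

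For the quasi-isomorphism I would invoke that both bar resolutions are already quasi-isomorphisms, that $B(\Aa)$ and $\lineB(\integer G)$ are degreewise flat over $\integer$ (under the flatness hypothesis on $\Aa$ addressed in the final appendix), and then apply K\"unneth to the tensor over $\integer$ to conclude that $\epsilon \otimes \bar\epsilon$ is a quasi-isomorphism. For cofibrancy, I would introduce the total bar-length filtration $F_N := \bigoplus_{n + m \leq N} B(\Aa)_n \otimes \lineB(\integer G)_m$. Using the action formulas, each $F_N$ is visibly an $\Aa \# G^e$-submodule, and since both $d_\Aa$ and $\delta$ strictly decrease the bar-length $n+m$, the associated graded $F_N / F_{N-1}$ has vanishing differential. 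I would then identify each summand $B(\Aa)_n \otimes \lineB(\integer G)_m$ in the graded piece with the free $\Aa \# G^e$-module on $\bar\Aa^{\otimes n} \otimes (\integer G)^{\otimes m}$, presenting the whole complex as semi-free and thus cofibrant.

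The main bookkeeping obstacle is this last identification. Because $(\Aa \# G)^e$ is not $\Aa^e \otimes \integer G^e$ but a twisted tensor product of them, one must write down an explicit $\integer$-linear bijection
\[
\Phi: \Aa \otimes \integer G \otimes \bar\Aa^{\otimes n} \otimes (\integer G)^{\otimes m} \otimes \Aa \otimes \integer G \;\stackrel{\cong}{\longrightarrow}\; \Aa \otimes \bar\Aa^{\otimes n} \otimes \Aa \otimes \integer G \otimes (\integer G)^{\otimes m} \otimes \integer G
\]
that shuffles the factors while compensating via $\sigma$; concretely, $\Phi$ sends $a \otimes g \otimes (\bar a_1 | \cdots | \bar a_n) \otimes (g_1 | \cdots | g_m) \otimes b \otimes k$ to $\bigl(a \mid g(\bar a_1) \mid \cdots \mid g(\bar a_n) \mid g(b)\bigr) \otimes (g \mid g g_1 \mid \cdots \mid g g_m \mid g k)$. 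Checking that $\Phi$ intertwines the standard free $\Aa \# G^e$-structure on the left with the formulas defined above on the right is exactly the place where the twisted action earns its keep; all other verifications are routine.
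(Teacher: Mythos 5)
Your module structure agrees with the paper's (the paper writes the combined two-sided action in a single formula, but it is the same twisted action), and the quasi-isomorphism step is acceptable: both augmentations admit $\integer$-linear contracting homotopies, so their tensor product over $\integer$ is a chain homotopy equivalence, and your K\"unneth variant also works. The untwisting isomorphism $\Phi$ is likewise the right way to see that a piece of the form $\Aa\otimes V\otimes\Aa\otimes(\integer G)^{m+2}$, with $V$ a free graded $\integer$-module carrying \emph{zero} differential, is a direct sum of shifted copies of $\Aa\#G^e$.

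The gap is in the cofibrancy filtration. The internal differential $d_\Aa$ does \emph{not} decrease the bar length: it acts slotwise on $\Aa\otimes\bar\Aa^{\otimes n}\otimes\Aa$ and preserves $n$; only the simplicial part $\delta$ of the bar differential drops $n$ by one. Consequently the associated graded of your total-bar-length filtration does not have vanishing differential: each piece $B(\Aa)_n\otimes\lineB(\integer G)_m$ retains the internal differential on the $n$ interior $\Aa$-slots, and such a complex is in general not a direct summand of a direct sum of shifted copies of $\Aa\#G^e$ (already $\Aa\otimes\mathrm{cone}(\mathrm{id}_\integer)\otimes\Aa$ is contractible, hence not free, and for $\Aa=C^*(M)$ the interior slots genuinely carry a nonzero differential). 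The paper repairs this by interleaving a second step at each bar length: writing $W^{p}\subseteq\Aa^{\otimes p}$ for the kernel of the internal differential, it first adjoins the pieces whose middle factor is $W^{p+1}$ (on which $d$ vanishes by definition) and only then the full pieces, whose subquotient has middle factor $\Aa^{\otimes j}/W^{j}$; since $\operatorname{im}d\subseteq\ker d=W^{j}$, the induced internal differential on that quotient vanishes too, so both families of subquotients have differential supported only on the outer $\Aa$-factors and are then summands of free modules via your $\Phi$ (modulo the $\integer$-freeness caveat treated in Appendix C). As written, your one-step-per-length filtration does not establish cofibrancy.
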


\begin{proof}
Let us denote the elements in $\Aa \# G^e$ by $(a \otimes g | b
\otimes k)$ where $a \otimes g$ and $b \otimes k$ belong to $\Aa
\# G$. The $\Aa \# G^e$-module structure of $B(\Aa)
\otimes_\integer \lineB(\integer G)$ is defined as
$$(a \otimes g | b
\otimes k)\left((x_0 | \dots | x_{k+1}) \otimes (h_0 | \dots
|h_{l+1})\right) = $$ $$ (a g(x_0)| g(x_1)| \dots | g(x_k) |
g(x_{k+1}) gh_{l+1}(b) ) \otimes ( gh_0 | gh_1 \dots | gh_l |
gh_{l+1}k).$$ It is a simple calculation to show that indeed the
previous structure is a $\Aa \#G^e$-module  structure on
$B(\Aa) \otimes \lineB( \integer G)$.

To show that $\epsilon \otimes \bar{\epsilon} : B(\Aa)
\otimes_\integer \lineB(\integer G) \to \Aa \# G$ is a morphism of
$\Aa \# G^e$-modules we need only to concentrate our attention to
the elements in $\Aa^2 \otimes_\integer \integer G^2$ and this follows from the commutativity of the following diagram

$$\xymatrix{
(x_0|x_1) \otimes(h_0,h_1) \ar[d]^{\epsilon \otimes
\bar{\epsilon}} \ar[rrr]^{(a\otimes g | b \otimes k)}& & &
(ag(x_0) | g(x_1) gh_1(b) ) \otimes (gh_0| gh_1k)
\ar[d]^{\epsilon \otimes \bar{\epsilon}} \\
(x_0x_1|h_1) \ar[rrr]^{(a\otimes g | b \otimes k)}& & &
(ag(x_0x_1)gh_1(b) | gh_1k) .}$$

The fact that the map $\epsilon \otimes \bar{\epsilon}$ is a
quasi-isomorphism follows from the facts that $\epsilon$ and
$\bar{\epsilon}$ are quasi-isomorphisms. Now we are left to prove
the cofibrant condition. For it consider filtration defined for $q
\geq 0$
$$F^{2q}:=\bigoplus_{p=0}^q \bigoplus_{j=0}^p \left(\left( \Aa \otimes \Aa^{\otimes j} \otimes
\Aa\right) \otimes (\integer G)^{p-j+2}\right)[p]$$ and
$$F^{2q+1}:= F^{2q} \oplus  \bigoplus_{p=0}^q
 \left(\left( \Aa \otimes W^{p+1} \otimes \Aa \right)\otimes (\integer G)^{q-p+2}\right)[q+1] $$
where $W^{p}$ is the $\Aa^e$-submodule of $\Aa^p$ defined as the kernel of the differential $d : \Aa^p \to \Aa^{p+1}$.

The subquotients of the filtration are isomorphic to the $\Aa \# G$-modules

$$F^{2q+1}/F^{2q} \cong \bigoplus_{p=0}^q
 \left(\left( \Aa \otimes W^{p+1} \otimes \Aa \right)\otimes (\integer G)^{q-p+2}\right)[q+1]$$
 $$F^{2q}/F^{2q-1} \cong \bigoplus_{j=0}^q \left(\left( \Aa \otimes \Aa^{\otimes j}/W^{j} \otimes
\Aa\right) \otimes (\integer G)^{q-j+2}\right)[q]$$ where in both
cases the induced differential is only different from zero on the
components of $\Aa$ on the far left and on the far right. As the
$\integer G^e$-modules $(\integer G)^{q-p+2}$ are all free, it
follows that the subquotients $F^{2q+1}/F^{2q}, F^{2q}/F^{2q-1}$
are summands of a direct sum of shifted copies of $\Aa \# G^e$,
therefore $B(\Aa) \otimes_\integer \lineB(\integer G)$ is
cofibrant.

\end{proof}

Now let us define a diagonal map for $B(\Aa) \otimes_\integer
\lineB(\integer G)$:
\begin{align} B(\Aa) \otimes_\integer
\lineB(\integer G) &\to B(\Aa) \otimes_\integer \lineB(\integer G)
\otimes_{\Aa \# G} B(\Aa)
\otimes_\integer \lineB(\integer G)  \label{diagonal B(A)xB(ZG)}\\
(x_0 | \dots | x_{k+1}) \otimes (h_0| \dots | h_{l+1}) &\mapsto  \nonumber \\
\sum_{i=0}^k \sum_{j=0}^l (x_0| ... |x_i
|1) \otimes (h_0| ... |  h_j  | & 1)  \otimes_{\Aa \#G}
(1|x_{i+1}|...  | x_{k+1}) \otimes (h_j|h_{j+1}| ... |h_{l+1})
\nonumber
\end{align}
that is just the juxtaposition of the diagonals maps for $B(\Aa)$
and $\lineB(\integer G)$ defined in (\ref{diagonal map bar}) and
(\ref{diagonal map lineBarG}) respectively. It follows then that the
diagonal map for $B(\Aa) \otimes_\integer \lineB(\integer G)$
satisfies the hypothesis for  Proposition \ref{proposition
diagonal map} and therefore it will induce the ring structure on
the Hochschild cohomology of $\Aa \# G$.

We are now ready to prove the main theorem of this section.

\subsection{ Proof of Theorem \ref{theorem decomposition HH}} We
have that $B(\Aa) \otimes_\integer \lineB(\integer G) \to \Aa\#G$
is a cofibrant replacement  for $\Aa \# G$ as an $\Aa \#
G^e$-module. Therefore we have the isomorphisms
$$HH_*(\Aa \#G , \Aa \#G) =
H^*\left( \left( B(\Aa) \otimes \lineB(\integer G) \right)
\otimes_{\Aa \#G ^e} \Aa \# G \right)$$

and

$$HH^*(\Aa \#G , \Aa \#G) =
H^* \HHom_{\Aa \#G ^e} \left( B(\Aa) \otimes \lineB(\integer G) ,
\Aa \# G \right). $$

Let us consider the sub-dg-rings $\integer G^e \subset \Aa \# G^e$
with  $g \otimes k \mapsto (1 \otimes g | 1 \otimes k)$ and $\Aa^e
\subset \Aa \# G^e$ with $x_0 \otimes x_1 \mapsto (x_0 \otimes 1|
x_1 \otimes 1)$, and note that as such $\Aa^e$ and $\integer G^e$
generate the dg-ring $\Aa \# G^e$. The sub-dg-ring $\Aa^e$ acts
trivially on the component $\lineB(\integer G)$ of $B(\Aa) \otimes
\lineB(\integer G)$ therefore we have an isomorphism

$$\left( B(\Aa) \otimes \lineB(\integer G) \right)
\otimes_{\Aa \#G ^e} \Aa \# G \cong \lineB(\integer G)
\otimes_{\integer G^e} \left( B(\Aa) \otimes_{\Aa^e} \Aa \#G
\right)$$ where the induced action of $g \otimes k \in \integer
G^e $ into $B(\Aa)$ is diagonal on $g $ and trivial on $k$, i.e.
$$(g \otimes k)(x_0 | \dots |x_{k+1}) \mapsto (g(x_0)| \dots |
g(x_{k+1})).$$

The previous argument applies also for the $\HHom$ functor and
therefore we have
$$\HHom_{\Aa \#G^e} \left( B(\Aa) \otimes \lineB(\integer G) ,
\Aa \# G \right) \cong \HHom_{\integer G^e} \left(\lineB(\integer
G) , \HHom_{\Aa^e}( B(\Aa), \Aa\#G)\right).$$

Let us now see more carefully the functors $$\lineB(\integer G)
\otimes_{\integer G^e} \_  \ \ \ \ \ \mbox{and} \ \ \ \
\HHom_{\integer G^e} \left(\lineB(\integer G) , \_ \right).$$ Note
that the action of the elements of the form $(1 \otimes k)$ is by
multiplication on the right, and this action is free on
$\lineB(\integer G)$. Note also that we could generate the ring
$\integer G^e$ by the subrings $1 \otimes \integer G$ and the
image of the diagonal homomorphism $\Delta :\integer G \to
\integer G^e$, $\Delta(g) = g \otimes g^{-1}$. Therefore we could
restrict our attention to the sub-complex $\lineB_G(\integer)$ of
$\lineB(\integer G)$
$$\lineB_G(\integer) = \{(h_0 | \dots | h_{k+1}) \in
\lineB(\integer G) | h_{k+1}=1 \}$$ consisting of the elements
which end in $1$, disregarding the module structure of the subring
$1 \otimes \integer G$ and considering it as a $\integer G$-module
given by the action induced by its image under the diagonal map in
$\integer G^e$.

Note that the $\integer G$ module structure  $\lineB_G(\integer)$
becomes a diagonal action
\begin{eqnarray*}
g \cdot (h_0| \dots |h_k|1) & := & (g \otimes g^{-1})(h_0| \dots
h_k|1) \\
& = & (gh_0 | \dots |gh_k| g1g^{-1})\\
& = &(gh_0 | \dots |gh_k| 1)
\end{eqnarray*}
and that $\lineB_G(\integer)$ becomes a cofibrant replacement for
$\integer$ as a trivial $\integer G$-module.

Thus we have the isomorphisms of complexes
$$
\lineB(\integer G) \otimes_{\integer G^e} \left( B(\Aa)
\otimes_{\Aa^e} \Aa \#G \right)   \cong  \lineB_G(\integer)
\otimes_{\integer G} \left( B(\Aa) \otimes_{\Aa^e} \Aa \#G
\right)$$
$$
\HHom_{\integer G^e} \left(\lineB(\integer G) , \HHom_{\Aa^e}(
B(\Aa), \Aa\#G)\right) \cong  \HHom_{\integer G}
\left(\lineB_G(\integer) , \HHom_{\Aa^e}( B(\Aa), \Aa\#G)\right)
$$
where the $\integer G$-module structure of $B(\Aa)$ is given by
the diagonal action and the $\integer G$-module structure of $\Aa
\#G$ is given by the natural action on $\Aa$ and by conjugation on
$G$, i.e.
\begin{equation*}
g \cdot (x \otimes k)  = (1 \otimes g)(x \otimes k) ( 1 \otimes
g^{-1}) = (g(x) \otimes g k g^{-1}).
\end{equation*}
We can therefore endow $\HHom_{\Aa^e}( B(\Aa), \Aa\#G)$ with the
structure of a $\integer G$-module as follows: for $f \in
\HHom_{\Aa^e}( B(\Aa), \Aa\#G)$  and $g \in G$ we have \begin{eqnarray} \label{G-module st Hom(BA, AG)}
(g f)
(x_0|\dots |x_{k+1}) := g^{-1}(f (g(x_0)|\dots |g(x_{k+1}))) .\end{eqnarray}

Hence we can conclude that there are isomorphisms
\begin{eqnarray*}
HH_*(\Aa \#G,\Aa \#G) & = & H^* \left( \lineB_G(\integer)
\otimes_{\integer G} \left( B(\Aa) \otimes_{\Aa^e} \Aa \#G \right)
\right) \\
& = & \Tor_{\integer G}^*(\integer, \Aa
\stackrel{L}{\otimes}_{\Aa^e}\Aa \#G)
\end{eqnarray*}
\begin{eqnarray*}
HH^*(\Aa \#G,\Aa \#G) & = & H^* \HHom_{\integer G}
\left(\lineB_G(\integer) , \HHom_{\Aa^e}( B(\Aa), \Aa\#G)\right)
\\
& = & \Ext_{\integer G}^*( \integer, \RHHom_{\Aa^e}(\Aa,\Aa \#G)).
\end{eqnarray*}

 \subsubsection{} We are now left to describe the
dg-ring structure of
$$\HHom_{\integer G}
\left(\lineB_G(\integer) , \HHom_{\Aa^e}( B(\Aa), \Aa\#G)\right)$$
that will induce the ring structure on the Hochschild cohomology.
The dg-ring structure is induced from the diagonal map for $B(\Aa)
\otimes_\integer \lineB(\integer G)$ that was defined in
(\ref{diagonal B(A)xB(ZG)}) and from this it follows that for
$$\phi, \psi \in \HHom_{\integer G}
\left(\lineB_G(\integer) , \HHom_{\Aa^e}( B(\Aa), \Aa\#G)\right)$$
we have that
\begin{align} \label{product in HomGHomA2}
 \phi \cdot \psi (h_0| \dots
|h_k|1)(x_0| \dots  | &x_{l+1}) =  \\
 \sum_{i=0}^k \sum_{j=0}^l \left( \phi(h_0| \dots |h_i|1)(x_0 |
\dots |x_j|1) \right) & \left( \psi(h_i| \dots | h_k|1)(1 |
x_{j+1} | \dots | x_{l+1}) \right). \nonumber \end{align}

\noindent This ends the proof of Theorem \ref{theorem
decomposition HH}.

Note that the $\integer G$-module structure on $\Aa \#G$ is given
by $g \cdot (x \otimes k) = (g(a), gkg^{-1})$. Therefore we can
split $\Aa \# G$ into $\integer G $-modules in the following way
\begin{eqnarray} \label{definition A_g}
\Aa \# G \cong \bigoplus_{T \in [G]} \Aa_T \ \ \ \ \ \mbox{with} \
\ \ \ \ \Aa_T=  \bigoplus_{h \in T} \Aa_h \end{eqnarray} where
$[G]$ is the set of conjugacy classes of elements in $G$ and
$\Aa_h$ is the subset of $\Aa \#G$ of elements of the form $x
\otimes h$,
\begin{eqnarray} \label{definition Aa_h}
\Aa_h : = \{ x \otimes h \in \Aa \# G | x \in \Aa\}.
\end{eqnarray}
 Let us choose one element of $G$ for each conjugacy
class in $[G]$, and let us denote this set of representatives by
$\langle G \rangle$. Then we have
\begin{cor} \label{corollary conjugacy classes}
There are isomomorphisms of graded groups
$$HH_*(\Aa \#G , \Aa \#G) \cong \bigoplus_{g \in \langle G \rangle} \Tor_{\integer C_g} (\integer, B(\Aa) {\otimes}_{\Aa^e} \Aa_g)$$
$$HH^*(\Aa \#G , \Aa \#G) \cong \bigoplus_{g \in \langle G \rangle} \Ext_{\integer C_g} (\integer, \HHom_{\Aa^e}( B(\Aa), \Aa_g))$$
where $C_g$ is the centralizer of $g$ in $G$ and $\Aa_g$ is viewed
as a $\integer C_g$ module.
\end{cor}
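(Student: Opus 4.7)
The plan is to deduce the corollary directly from Theorem~\ref{theorem decomposition HH} by decomposing $\Aa\#G$ along its conjugacy classes as in (\ref{definition A_g}) and then applying a version of Shapiro's lemma.

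First I would verify that the decomposition $\Aa\#G=\bigoplus_{T\in[G]}\Aa_T$ is compatible with the $\integer G$-action on $\Aa\#G$: conjugation by $g$ sends $x\otimes h\in\Aa_h$ to $g(x)\otimes ghg^{-1}\in\Aa_{ghg^{-1}}$, so each summand $\Aa_T$ is a $\integer G$-submodule. Picking the representative $g\in\langle G\rangle\cap T$, the centralizer $C_g$ preserves the subspace $\Aa_g\subset\Aa\#G$ (because conjugation by elements of $C_g$ fixes the group component $g$), and an orbit-stabilizer argument gives a canonical identification of $\integer G$-modules
$$\Aa_T \cong \integer G\otimes_{\integer C_g}\Aa_g.$$

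Next I would substitute this decomposition into the two formulas of Theorem~\ref{theorem decomposition HH}. Since $\Aa\stackrel{L}{\otimes}_{\Aa^e}\_$ commutes with arbitrary direct sums and with the induction $\integer G\otimes_{\integer C_g}\_$, while $\RHHom_{\Aa^e}(\Aa,\_)$ commutes with finite direct sums and (in the relevant case) with induction, I obtain
$$\Aa\stackrel{L}{\otimes}_{\Aa^e}\Aa\#G\cong\bigoplus_{g\in\langle G\rangle}\integer G\otimes_{\integer C_g}\bigl(\Aa\stackrel{L}{\otimes}_{\Aa^e}\Aa_g\bigr),$$
together with its Hom-analog. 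A standard Shapiro's lemma then eliminates the induction step, giving
$$\Tor^*_{\integer G}(\integer,\integer G\otimes_{\integer C_g}M)\cong\Tor^*_{\integer C_g}(\integer,M)$$
and the corresponding Ext-statement. Replacing the derived functors by the explicit models $B(\Aa)\otimes_{\Aa^e}\_$ and $\HHom_{\Aa^e}(B(\Aa),\_)$ via the cofibrant replacement $B(\Aa)\stackrel{\epsilon}{\to}\Aa$ yields exactly the formulas in the statement.

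The step that requires the most care is the Ext case: $\Hom_{\integer G}(\integer,\_)$ and therefore $\Ext^*_{\integer G}(\integer,\_)$ do not in general commute with infinite direct sums, and the clean form of Shapiro's lemma for $\Ext$ requires coinduction rather than induction. In the setting of the rest of the paper $G$ is finite, so $[G]$ is finite, every centralizer $C_g$ has finite index in $G$, and induction and coinduction along $C_g\hookrightarrow G$ coincide: this is the observation that makes the Ext isomorphism go through without modification. Once this is noted, both isomorphisms follow by direct substitution, and the description of the ring structure on $\Ext^*_{\integer G}(\integer,\RHHom_{\Aa^e}(\Aa,\Aa\#G))$ in Theorem~\ref{theorem decomposition HH} induces the stated ring structure summand-by-summand.
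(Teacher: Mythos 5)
Your proposal is correct and follows essentially the same route as the paper: decompose $\Aa\#G$ (equivalently, $\HHom_{\Aa^e}(B(\Aa),\Aa\#G)$) into conjugacy-class summands, identify each summand as induced from the centralizer, and reduce $\Ext_{\integer G}$ to $\Ext_{\integer C_g}$ via Shapiro's lemma. You are in fact more explicit than the paper, which simply asserts the two isomorphisms in the displayed chain; your remark that the $\Ext$ case needs induction to agree with coinduction (automatic for finite $G$, where all centralizers have finite index and $[G]$ is finite) is a genuine point the paper glosses over.
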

\begin{proof}
Both isomorphisms are proved by the same argument; let us prove
the second one. As a $\integer G$-module have the isomorphism
$$\HHom_{\Aa^e}( B(\Aa), \Aa \# G) \cong \bigoplus_{T \in [G]} \HHom_{\Aa^e}( B(\Aa), \Aa_T)$$
and therefore we have that
\begin{eqnarray*}
\Ext_{\integer G}^*( \integer, \HHom_{\Aa^e}(B(\Aa), \Aa \#G)) & \cong & \bigoplus_{T \in [G]} \Ext_{\integer G}^*( \integer, \HHom_{\Aa^e}(B(\Aa), \Aa_T))\\
& \cong & \bigoplus_{g \in \langle G \rangle } \Ext_{\integer
C_g}^*( \integer, \HHom_{\Aa^e}(B(\Aa), \Aa_g)).
\end{eqnarray*}

\end{proof}

\begin{cor}
If $\Aa$ is a free $\integer G$-module-dg-ring, then we have
isomorphisms of graded groups
$$HH_*(\Aa \#G , \Aa \#G) \cong \bigoplus_{g \in \langle G \rangle} H^* \left( \left( B(\Aa) {\otimes}_{\Aa^e} \Aa_g \right)^{C_g} \right)$$
$$HH^*(\Aa \#G , \Aa \#G) \cong \bigoplus_{g \in \langle G \rangle}  H^* \left( \left( \HHom_{\Aa^e}( B(\Aa), \Aa_g) \right)^{C_g} \right).$$
\end{cor}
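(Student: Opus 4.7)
I would build directly on Corollary \ref{corollary conjugacy classes}, which already supplies the decomposition
$$HH_*(\Aa \#G,\Aa \#G) \cong \bigoplus_{g \in \langle G \rangle} \Tor^*_{\integer C_g}\bigl(\integer,\, B(\Aa) \otimes_{\Aa^e} \Aa_g\bigr),$$
$$HH^*(\Aa \#G,\Aa \#G) \cong \bigoplus_{g \in \langle G \rangle} \Ext^*_{\integer C_g}\bigl(\integer,\, \HHom_{\Aa^e}(B(\Aa), \Aa_g)\bigr).$$
Therefore the remaining task is to show that, under the freeness hypothesis, the $\integer C_g$-module structure on each term of the complexes $B(\Aa) \otimes_{\Aa^e} \Aa_g$ and $\HHom_{\Aa^e}(B(\Aa),\Aa_g)$ is nice enough that these derived functors collapse to the cohomology of $C_g$-invariants.

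The key preliminary is to propagate the hypothesis ``$\Aa$ is a free $\integer G$-module'' through the bar construction. Because $G$ is finite, $\integer G$ is $\integer$-free, so $\Aa$ is also $\integer$-free. For any $\integer$-free $\integer G$-module $N$, the untwisting map $\integer G \otimes_\integer N \to \integer G \otimes_\integer N$, $g \otimes n \mapsto g \otimes g^{-1}n$, conjugates the diagonal action into the action on the first factor only, so $\integer G \otimes_\integer N$ with diagonal action is a free $\integer G$-module. Iterating, every tensor power $\Aa^{\otimes n}$ with diagonal $G$-action is a free $\integer G$-module, hence free as a $\integer C_g$-module. Since $\Aa_g \cong \Aa$ as a $\integer C_g$-module (for $h\in C_g$, $h\cdot(x\otimes g) = h(x)\otimes g$), the identification $B(\Aa)_k \otimes_{\Aa^e} \Aa_g \cong \Aa^{\otimes k} \otimes_\integer \Aa_g$ is $C_g$-equivariant with diagonal actions on both sides, and shows that $B(\Aa) \otimes_{\Aa^e} \Aa_g$ is termwise $\integer C_g$-free. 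On the cochain side, $\HHom_{\Aa^e}(B(\Aa)_k, \Aa_g) \cong \HHom_\integer(\Aa^{\otimes k}, \Aa_g)$ is a (possibly infinite) direct product of copies of $\HHom_\integer(\integer C_g, \Aa_g)$; the latter is a coinduced $\integer C_g$-module and hence $\integer C_g$-injective, and arbitrary products preserve injectivity, so each of these terms is $\integer C_g$-injective.

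Two standard facts from finite-group homological algebra then finish the argument. First, since $\integer C_g$ is a Frobenius $\integer$-algebra, every free $\integer C_g$-module is both projective and injective; therefore for a complex $M$ of free $\integer C_g$-modules, $\Tor^*_{\integer C_g}(\integer, M) = H^*(M_{C_g})$, and the norm map $M_{C_g}\to M^{C_g}$, $\bar m \mapsto \sum_{h\in C_g}hm$, is a level-by-level isomorphism. Second, for a complex $N$ of $\integer C_g$-injective modules, $\RHHom_{\integer C_g}(\integer, N) = \HHom_{\integer C_g}(\integer, N) = N^{C_g}$, so $\Ext^*_{\integer C_g}(\integer, N) = H^*(N^{C_g})$. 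Applying these to $M = B(\Aa) \otimes_{\Aa^e} \Aa_g$ and $N = \HHom_{\Aa^e}(B(\Aa), \Aa_g)$ and summing over $g \in \langle G\rangle$ yields the two claimed isomorphisms.

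The main obstacle is the asymmetry between chains and cochains when $\Aa$ has infinite rank over $\integer G$: the complex $\HHom_\integer(\Aa^{\otimes k}, \Aa_g)$ is then a direct \emph{product} of free $\integer C_g$-modules rather than a direct sum, and need not be $\integer C_g$-free, so one cannot naively invoke the norm isomorphism on the Hom side. The fix is to phrase the $\Ext$-computation via injectivity (products of injectives are injective) while the $\Tor$-computation is phrased via projectivity (direct sums of projectives are projective). Apart from this bookkeeping, the proof is purely a matter of combining Corollary \ref{corollary conjugacy classes} with the two finite-group facts above.
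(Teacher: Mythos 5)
Your proposal follows the paper's own route: the paper disposes of this corollary in one sentence, citing Corollary \ref{corollary conjugacy classes} together with the fact that for free modules over a finite group ring the invariants and coinvariants agree. Your expansion of that sentence --- untwisting the diagonal action to see that $B(\Aa)\otimes_{\Aa^e}\Aa_g \cong \bigoplus_k \Aa^{\otimes k}[k]\otimes_\integer\Aa_g$ is termwise $\integer C_g$-free, and then collapsing $\Tor_{\integer C_g}(\integer,-)$ and $\Ext_{\integer C_g}(\integer,-)$ to coinvariants and invariants --- is the right fleshing-out, and your observation that the $\HHom$-side must be handled as a \emph{product} rather than a sum is a genuine point the paper leaves implicit.

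One caveat: two of your intermediate justifications are false as stated over $\integer$. Free $\integer C_g$-modules are not injective ($\integer C_g$ is not divisible as an abelian group, so it is not injective even over $\integer$, hence not over $\integer C_g$), and $\HHom_\integer(\integer C_g, W)$ is $\integer C_g$-injective only when $W$ is an injective abelian group; the ``Frobenius, hence free $=$ injective'' argument works over a field but not over the integers. What is true, and all that you need, is that induced ($=$ coinduced, $C_g$ being finite) modules are acyclic for both group homology and group cohomology: $\Ext^{>0}_{\integer C_g}(\integer, \HHom_\integer(\integer C_g,W))=0$ with $\Ext^0$ equal to the invariants, by Shapiro's lemma (concretely, $\HHom_{\integer C_g}(P,\HHom_\integer(\integer C_g,W))\cong\HHom_\integer(P,W)$ and $P\to\integer$ is a homotopy equivalence of complexes of free abelian groups), this being preserved under arbitrary products; and dually $H_{>0}(C_g,N)=0$ with the norm map $N_{C_g}\to N^{C_g}$ an isomorphism for $N$ free. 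Substituting these standard facts for the injectivity claims, your argument goes through and coincides in substance with the paper's.
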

\begin{proof}
The isomorphisms follow form Corollary \ref{corollary conjugacy
classes} and the fact that for free modules the invariants and the
coinvariants are isomorphic.
\end{proof}

\subsection{Further results} We end up this section with some results that
follow from the proof of Theorem \ref{theorem decomposition HH}.

\subsubsection{} \label{subsubsection B(G)} The diagonal map of $B(\Aa)$ defined on (\ref{diagonal map
bar}) endows the complex $$\HHom_{\Aa^e}( B(\Aa), \Aa\#G)$$ with
the structure of a dg-ring which is moreover a $\integer G$-module
dg-ring. Following the notation of (\ref{definition Aa_h}) if we take two functions
$$ \phi \in \HHom_{\Aa^e}(\Aa^{k+2}[k], \Aa_g) ,\ \ \ \psi \in  \HHom_{\Aa^e}(\Aa^{l+2}[l], \Aa_h)$$
then its product  $\phi \cdot \psi$ lives in $\HHom_{\Aa^e}(\Aa^{k +l+2}[k+l], \Aa_{gh})$ and is defined as
\begin{align*}
(\phi \cdot \psi)(a_0 |   \dots |  & a_{k+l+2}) \\
=& (-1)^{|\psi| \varepsilon_k} (\phi_g(a_0|\dots |a_k|1)  \otimes g )(\psi_h(1|a_{k+1}| \dots |a_{k+l+1}) \otimes h) \\
= & (-1)^{|\psi| \varepsilon_k} \phi_g(a_0|\dots |a_k|1) g(\psi_h(1|a_{k+1}| \dots |a_{k+l+1}) )\otimes gh
\end{align*}
where we use the convention  $$\phi({\bf a}) = \sum_g (\phi_g({\bf a}) \otimes g).$$
The $\integer G$-module structure of $\HHom_{\Aa^e}( B(\Aa), \Aa\#G)$ is defined in ({\ref{G-module st Hom(BA, AG)}) and it is easy to check that
for $k \in G$ we $(k\phi)(k\psi) = k(\phi \cdot \psi)$.

We therefore have that $\Ext^*_{\Aa^e}(\Aa,\Aa \# G)$ becomes a
$\integer G$-module ring.

\subsubsection{} \label{subsubsection Hom_A2} Whenever $W$ is a $\integer G$-module ring, the complex
$\HHom_{\integer G}(\lineB_G(\integer), W)$ can be endowed with
the structure of a dg-ring with a product structure induced by the
formula in (\ref{product in HomGHomA2}), i.e. for $\alpha, \beta \in
\HHom_{\integer G}(\lineB_G(\integer), W)$ we have that
$$\alpha \cdot \beta \ (h_0 | \dots | h_l|1) = \sum_{j=0}^l
\alpha(h_0| \dots | h_j | 1) \beta (h_j| \dots |h_l|1).$$ This
dg-ring structure  makes $\Ext^*_{\integer G}(\integer, W)$ into a
ring. In the case that $W = \integer$, the ring $\Ext^*_{\integer
G}(\integer, \integer)$ is isomorphic to the ring $H^*(BG,
\integer)$.

\subsubsection{} If we filter both complexes
$$\lineB_G(\integer)
\otimes_{\integer G} \left( B(\Aa) \otimes_{\Aa^e} \Aa \#G
\right)$$
$$\HHom_{\integer G}
\left(\lineB_G(\integer) , \HHom_{\Aa^e}( B(\Aa), \Aa\#G)\right)$$
by the degree of the elements in $\lineB_G(\integer)$ then we get
spectral sequences which abut to the Hochschild homology and
cohomology respectively and whose second page is given by
$$ E^2=\Tor_{\integer G}^*(\integer, \Tor_{\Aa^e}(\Aa, \Aa \# G))
\Rightarrow HH_*(\Aa \# G, \Aa \#G)$$ and
$$ E^2=\Ext_{\integer G}^*(\integer, \Ext_{\Aa^e}(\Aa, \Aa \# G))
 \Rightarrow HH^*(\Aa \# G, \Aa \#G).$$

In the case of the Hochschild cohomology ring, the spectral
sequence is a sequence of rings where
$$ \Ext_{\integer G}^*(\integer, \Ext_{\Aa^e}(\Aa, \Aa \# G))$$
has the ring structure explained in \ref{subsubsection B(G)} and
\ref{subsubsection Hom_A2}.

\section{Loops on quotient spaces} \label{section loops quotient}
This section is devoted to show the relation between the complexes
$$B(\Aa) \otimes_{\Aa^e} \Aa \#G \ \ \ \ \ \ \mbox{and}  \ \ \  \ \
\HHom_{\Aa^e}(B(\Aa),\Aa\#G),$$ and the cochains and chains
respectively for the loops on the groupoid $[X/G]$ whenever we
take $\Aa= C^*(X)$.

\subsection{Loops on $[X/G]$}

Let $X$ be a connected CW-complex of finite type and $G$ a
discrete group acting on $X$. Denote by $[X/G]$ the action
groupoid whose objects are $X$ and whose morphisms are $X \times
G$ with $s(x,g)=x$ and $t(x,g)=xg$.

The loops on $[X/G]$ can be understood as the groupoid whose
objects are the functors from the groupoid $[\real/\integer]$ to
$[X/G]$ and whose morphisms are given by natural transformations.
More explicitly, we have

\begin{definition} \label{definition loop groupoid}
The loop groupoid $\Loop[X/G]$ for $[X/G]$ is the action groupoid
$[P_GX/G]$ whose space of objects is
$$P_GX := \bigsqcup_{g \in G} P_gX \times \{g\} \ \ \ \
\mbox{with} \ \ \ \  P_gX := \{f : [0,1] \to X | f(0)g =
f(1)\}$$ and which are endowed with the $G$-action
\begin{eqnarray*}
G \times P_GX & \to & P_GX\\
((f,k),g) & \mapsto & (fg, g^{-1}kg).
\end{eqnarray*}
\end{definition}

In Theorem 2.3 of \citep{LUX} it is proven that for $G$ a finite
group there exist a natural weak homotopy equivalence
$$ \LL(EG \times_G X) \to EG \times_G P_GX$$
between the free loop space of the homotopy quotient $EG \times_G
X$ and the homotopy quotient of the loop groupoid. This proof can
be easily generalized to the case that $G$ is discrete.

In this section we will consider $P_GX$ as a $G$-space and will
not work with its homotopy quotient. Now we will give a
cosimplicial description for the spaces $P_gX$.

\subsection{Cosimplicial description for $P_gX$}

Let us start by recalling a cosimplicial construction of the space
of paths of a topological space $X$ that was done in \citep{Jones}
(we will use the properties of simplicial and cosimplicial spaces
that are developed in \citep{SegalCategories} and in
\citep{BottSegal} respectively). Take the category $\Delta$ whose
objects are the finite ordered sets ${\gr n} = \{0, 1, \dots, n\}$
and whose morphisms $\Delta({\gr n}, {\gr m})$ are the order
preserving maps $s \colon {\gr n} \to {\gr m}$. The morphisms of
$\Delta$ are generated by:
\begin{itemize}
\item The face maps $\delta_i \in \Delta({\gr n}-1,{\gr n})$, $0
\leq i \leq n$; the unique order preserving map  whose image does
not contain $i$. \item The degeneracy maps $\sigma_i \in
\Delta({\gr n}+1, {\gr n})$, $0 \leq i \leq n$; the unique
surjective order preserving map which repeats $i$.
\end{itemize}
These generators satisfy the usual cosimplicial relations. We
define a simplicial object in a category ${\gr C}$ to be a
contravariant functor $\Delta \to {\gr C}$ and a cosimplicial
object in ${\gr C}$ to be a covariant functor $\Delta \to {\gr
C}$.

Now let us define the simplicial sets $\lambda^n$ where
$\lambda^n({\gr m}):=\Delta({\gr m},{\gr n})$ with the natural
coface and codegeneracy maps induced by $\Delta$. The geometric
realization $|\lambda^n|$ of the simplicial set $\lambda^n$ is
homeomorphic to the $n$-simplex $\Delta_n$. In particular when
$n=1$, the simplicial space $\lambda^1$  has as geometric
realization the 1-simplex $\Delta_1$. One could think of it as
$\lambda^1({\gr n}) = {\gr n}+2$ with cofaces $\lambda^1({\gr n})
\to \lambda^1({\gr n} -1)$ and codegeneracies $\lambda^1({\gr
n}-1) \to \lambda^1({\gr n})$ given by order preserving maps that
send $0$ to $0$, and $n+2$ to $n+1$ in the former case and $n+1$
to $n+2$ in the latter.

Consider the cosimplicial space $\Pp X:= X^{\lambda^1}$ defined by
taking the maps from the simplicial set $\lambda^1$ to $X$. Then
we have that  $$\Pp X({\gr n}):= \mbox{Map}(\lambda^1({\gr n}),
X)\cong X^{n+2}$$ and the cosimplicial structure maps are the ones
induced by the simplicial structure of $\lambda^1$. We have the
tautology \citep[Prop 5.1]{BottSegal}

\begin{lemma}
There is a natural homeomorphism between the space of paths
$$PX:= \mbox{Map}(|\lambda^1|,X) =
\mbox{Map}(\Delta_1,X)$$ and the total space $|\Pp X|$ of the
cosimplicial space $\Pp X$.
\end{lemma}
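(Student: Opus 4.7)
The plan is to identify the total space of the cosimplicial space $\Pp X$ with the mapping space $\mathrm{Map}(|\lambda^1|,X)$ via the standard exponential adjunction combined with the coend formula for geometric realization. Concretely, I will exhibit a natural homeomorphism in both directions and show they are mutually inverse.

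First, I recall the general description of the total space. For a cosimplicial space $Y^\bullet$, one has
$$|Y^\bullet| \;=\; \mathrm{Tot}(Y^\bullet) \;=\; \int_{\mathbf{n}\in\Delta}\mathrm{Map}(\Delta_n, Y^n),$$
i.e. the space of families $(f_n\colon\Delta_n\to Y^n)_{n\ge 0}$ compatible with all coface and codegeneracy maps. Specializing to $Y^n=\Pp X({\gr n})=\mathrm{Map}(\lambda^1({\gr n}),X)$, and using the exponential adjunction (valid because $\lambda^1({\gr n})$ is a finite discrete set, so $\mathrm{Map}(\lambda^1({\gr n}),X)=X^{\lambda^1({\gr n})}$), I get a natural homeomorphism
$$\mathrm{Map}\bigl(\Delta_n, \Pp X({\gr n})\bigr) \;\cong\; \mathrm{Map}\bigl(\Delta_n\times\lambda^1({\gr n}),\,X\bigr).$$

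The next step is to transport the end along this adjunction. Since $\mathrm{Map}(-,X)$ converts colimits in the first variable to limits, the end on the left corresponds to the mapping space out of the corresponding coend on the right:
$$\int_{\mathbf{n}\in\Delta}\mathrm{Map}\bigl(\Delta_n\times\lambda^1({\gr n}),X\bigr) \;\cong\; \mathrm{Map}\!\left(\int^{\mathbf{n}\in\Delta}\Delta_n\times\lambda^1({\gr n}),\,X\right).$$
But the coend on the right is precisely the standard coend formula for the geometric realization of the simplicial set $\lambda^1$, namely $|\lambda^1|=\int^{\mathbf{n}}\Delta_n\times\lambda^1({\gr n})$. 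Composing the displayed isomorphisms yields the desired natural homeomorphism $|\Pp X|\cong\mathrm{Map}(|\lambda^1|,X)=PX$.

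Unwinding, the map from $PX$ to $|\Pp X|$ sends a path $\gamma\colon|\lambda^1|\to X$ to the compatible family whose $n$-th component is
$$\Delta_n \longrightarrow X^{\lambda^1({\gr n})}, \qquad t \longmapsto \bigl(s\mapsto \gamma(|s|(t))\bigr),$$
where $|s|\colon\Delta_n\to|\lambda^1|$ is the realization of $s\in\lambda^1({\gr n})=\Delta({\gr n},{\gr 1})$, while the inverse glues the family $(f_n)$ via the coend presentation of $|\lambda^1|$. Checking compatibility with cofaces and codegeneracies amounts to naturality of $|s|$ in $s$, which is tautological. The only non-routine point is verifying that the adjunction and coend identifications are continuous and preserve the limit topology on $\mathrm{Tot}$; this is standard in the setting of compactly generated spaces (as in \citep{BottSegal}), so no real obstacle is expected beyond bookkeeping.
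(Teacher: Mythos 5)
Your argument is correct and is essentially the paper's: the paper simply cites \citep[Prop.\ 5.1]{BottSegal} and calls the identification a tautology, and your end/coend adjunction $\mathrm{Tot}(X^{\lambda^1})=\int_{\mathbf n}\mathrm{Map}(\Delta_n\times\lambda^1({\gr n}),X)\cong\mathrm{Map}(|\lambda^1|,X)$ is precisely the content of that tautology. The explicit formula you extract, $t\mapsto(s\mapsto\gamma(|s|(t)))$, also matches the maps $\phi_k$ the paper uses later for $\Pp_gX$.
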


Consider $\overline{\Pp}_gX$ the subcosimplicial space of $\Pp X$
defined by the spaces
$$\overline{\Pp}_gX({\gr n}) := \{(x_0, \dots, x_{n+1}) \in \Pp X({\gr n}) \colon x_0g = x_{n+1} \} \cong X^{n+1}.  $$
Clearly the coface and codegeneracy maps are well defined in
$\overline{\Pp}_g X$ and it follows that the total space of
$\overline{\Pp}_g X$ is homeomorphic to the space of paths $\gamma
\in PX$ such that $\gamma(0)g=\gamma(1)$.

Define the cosimplicial space $\Pp_g X$ by dropping the last
coordinate of $\overline{\Pp}_gX$ (as the last coordinate
$x_{n+1}$ on the $n$-the level is equal to $x_0g$) therefore
having
$$\Pp_gX ({\gr n}):= X^{n+1}$$ together with
the induced coface and codegeneracy maps given by:
\begin{eqnarray}
\label{cofaces P_gX} \delta_i(x_0, \dots, x_{n}) & = & (x_0, \dots, x_{i-1}, x_i, x_i, x_{i+1}, \dots, x_{n}) \ \ \ \ 0 \leq i \leq n\\
\nonumber\delta_{n+1}(x_0, \dots, x_{n}) & = & (x_0, \dots, x_{n}, x_0g)\\
\nonumber\sigma_i(x_0, \dots, x_{n}) & = & (x_0, \dots, x_i,
x_{i+2}, \dots, x_{n}) \ \ \ \ 0 \leq i \leq n-1.
\end{eqnarray}
We can now consider the maps
\begin{eqnarray}
\phi_k \colon \Delta_k \times P_gX & \to & X^{k+1}=\Pp_gX({\gr n}) \label{functions phi_k} \\
(t_1, \dots, t_k) \times \gamma & \mapsto & (\gamma(0),
\gamma(t_1), \dots, \gamma(t_k)). \nonumber
\end{eqnarray}
together with $\hat{\phi}_k : P_gX \to \mbox{Map}(\Delta_k, \Pp_gX)$
their adjoints. Let $$\phi: P_gX \to \prod_{k \geq 0}
\mbox{Map}(\Delta_k, X^{k+1})$$ denote the product of the maps
$\hat{\phi}_k$, then we have

\begin{lemma} \label{lemma total map of spectra}
The image of the map $\phi$ is the total space of $\Pp_gX$ and
therefore  $\phi : P_gX \to |\Pp_gX|$ is a homeomorphism.
\end{lemma}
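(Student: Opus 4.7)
The plan is to reduce the lemma to the tautology $PX \cong |\Pp X|$ recalled just before the statement, via a comparison with the intermediate cosimplicial subspace $\overline{\Pp}_gX \subset \Pp X$.

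First, I would verify that $\phi$ actually lands in the total space $|\Pp_gX|$, i.e.\ that for every $\gamma \in P_gX$ the family $\{\hat{\phi}_k(\gamma)\colon \Delta_k \to X^{k+1}\}_k$ satisfies the cosimplicial compatibility conditions with respect to the maps in (\ref{cofaces P_gX}). Since $\phi_k(t_1,\dots,t_k;\gamma) = (\gamma(0),\gamma(t_1),\dots,\gamma(t_k))$, the cofaces $\delta_i$ for $0 \le i \le n$ duplicate a coordinate and the codegeneracies $\sigma_i$ delete one, which matches the fact that the geometric cofaces $\Delta_{n-1} \to \Delta_n$ duplicate parameters and the geometric codegeneracies drop them. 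The only nonroutine check is for the final coface $\delta_{n+1}$, which appends $x_0 g$; on the geometric side this corresponds to inserting the parameter $t=1$ into $\phi_k$, producing $\gamma(1)$, and the defining condition $\gamma(0)g = \gamma(1)$ of $P_gX$ is exactly what makes these agree.

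Second, I would exploit the projection $\overline{\Pp}_gX \to \Pp_gX$ that forgets the last coordinate. On level $\gr n$ it sends $(x_0,\dots,x_{n+1})$ with $x_{n+1}=x_0g$ to $(x_0,\dots,x_n)$, and a direct check shows it intertwines the cosimplicial structure inherited from $\Pp X$ with the one in (\ref{cofaces P_gX}); the last coface of $\overline{\Pp}_gX$, which duplicates $x_{n+1}$, becomes under this identification the map appending $x_0g$, namely $\delta_{n+1}$. Hence this projection induces a homeomorphism $|\overline{\Pp}_gX| \to |\Pp_gX|$, with inverse restoring the redundant last coordinate as $x_0g$.

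Third, I would conclude. Because $\overline{\Pp}_gX$ is levelwise the closed subspace of $\Pp X$ cut out by $x_{n+1}=x_0 g$, its total space $|\overline{\Pp}_gX|$ is the corresponding closed subspace of $|\Pp X| \cong PX$, namely the set of paths $\gamma$ with $\gamma(1)=\gamma(0)g$, which is $P_gX$. Composing with the homeomorphism $|\overline{\Pp}_gX| \cong |\Pp_gX|$ from the previous step and unwinding the identifications, the resulting map $P_gX \to |\Pp_gX|$ is precisely the map $\phi$. Thus $\phi$ is a homeomorphism whose image is the full total space.

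The main obstacle will be the compatibility check for the last coface $\delta_{n+1}$, since this is the single place where the twisting by $g$ enters and where the structure of $P_gX$ (as opposed to the ordinary path space) is actually used; after this check, the lemma reduces to two transport-of-structure arguments along the Bott--Segal tautology.
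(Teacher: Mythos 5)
Your proof is correct, and it follows exactly the route the paper intends: the paper states this lemma without proof, treating it as an immediate consequence of the Bott--Segal tautology $PX \cong |\Pp X|$ recalled just before, and your argument simply makes that reduction explicit (the levelwise identification $\overline{\Pp}_gX \cong \Pp_gX$ dropping the redundant coordinate $x_{n+1}=x_0g$, plus the observation that the only place the twisting enters is the last coface). Nothing to add.
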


\subsection{Cochains on loops of $[X/G]$}

Let us denote by $${C^*}:=C^*(X;\integer)$$ the dg-ring of
$\integer$-valued singular cochains on $X$. Because $G$ acts on
$X$ then ${C^*}$ is endowed with the structure of a $\integer
G$-module-dg-ring.

For $g \in G$ let ${C^*_g}$ be the submodule of ${C^*}\#G$ generated
by the elements of the form $x \otimes g$ as it was defined in (\ref{definition
A_g}). Note that ${C^*_g}$ inherits the structure of a
${C^*}^e$-module when one takes ${C^*}^e \subset {C^*} \#G^e$ and ${C^*_g}
\subset {C^*}\#G$ in the following way: let $a_0 \otimes b_0 \in
{C^*}^e$ and $x \otimes g \in {C^*_g}$, then
\begin{eqnarray*}
(a_0 \otimes a_1) \cdot x \otimes g & = & (a_0 \otimes 1 | a_1
\otimes 1)
(x\otimes g)  \nonumber\\
&=&(a_0 \otimes 1)(x \otimes g)( a_1 \otimes 1) \nonumber\\
& =& a_0 x g(a_1) \otimes g.
\end{eqnarray*}

\begin{theorem} \label{theorem Tor=H^*(loops)}
For $X$ a connected and simply connected CW-complex of finite
type, there exists a homomorphism of complexes
$$B({C^*}) \otimes_{{C^*}^e} {C^*_g} \stackrel{\simeq}{\To} C^* (P_gX ; \integer)$$
that moreover is a quasi-isomorphism, and therefore induces an
isomorphism
$$\Tor^*_{{C^*}^e}({C^*},{C^*_g}) \stackrel{\cong}{\To} H^* (P_gX ; \integer).$$
\end{theorem}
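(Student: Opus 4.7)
The plan is to realize both sides of the claimed map as the totalization of a common cosimplicial cochain complex, and then invoke the Bousfield--Kan / Eilenberg--Moore spectral sequence (whose convergence requires $X$ to be simply connected) to compare them. This is a direct adaptation of Jones' argument \citep{Jones} for ordinary free loops (the case $g=1$), with the $g$-twist accounted for by the final coface $\delta_{n+1}$ of $\Pp_g X$. Concretely, since $\Pp_g X(\gr n)=X^{n+1}$, the simplicial chain complex $C^*(\Pp_g X)$ has $C^*(X^{n+1})$ in degree $n$, and the K\"unneth quasi-isomorphism (using that $X$ is of finite type) identifies this with $({C^*})^{\otimes(n+1)}$. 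On the algebraic side, writing $B({C^*})=\bigoplus_k({C^*})^{\otimes(k+2)}[k]$ and noting ${C^*_g}\cong{C^*}$ as a $\integer$-module, the tensor product $B({C^*})\otimes_{{C^*}^e}{C^*_g}$ is concentrated in bar-degree $k$ at $({C^*})^{\otimes(k+1)}$; hence both sides are modeled by $({C^*})^{\otimes(n+1)}$ in degree $n$.

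Next I match the differentials. The cofaces $\delta_i$ for $0\le i\le n$ in (\ref{cofaces P_gX}) duplicate the $i$-th coordinate, so their pullbacks cup-multiply adjacent factors; these are precisely the interior face maps of the Hochschild complex. The final coface $\delta_{n+1}(x_0,\dots,x_n)=(x_0,\dots,x_n,x_0g)$ yields
\begin{equation*}
\delta_{n+1}^*(c_0\otimes\cdots\otimes c_{n+1})=\bigl(c_0\cup g^*(c_{n+1})\bigr)\otimes c_1\otimes\cdots\otimes c_n,
\end{equation*}
which is exactly the wrap-around face of the Hochschild complex with coefficients in the ${C^*}$-bimodule ${C^*_g}$, since the bimodule structure recalled before the theorem gives $(x\otimes g)\cdot a=xg(a)\otimes g$. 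Thus the totalization of $C^*(\Pp_g X)$ is, as a complex, isomorphic to $B({C^*})\otimes_{{C^*}^e}{C^*_g}$.

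Finally I construct the comparison map and prove it is a quasi-isomorphism. Using the maps $\phi_k:\Delta_k\times P_gX\to X^{k+1}$ from (\ref{functions phi_k}), an element $c_0\otimes\cdots\otimes c_k\in({C^*})^{\otimes(k+1)}$ defines a cochain on $P_gX$ by Chen/Jones iterated integration: pull back along $\phi_k$ and integrate over $\Delta_k$; equivalently, via Lemma~\ref{lemma total map of spectra}, this is the natural evaluation $\mathrm{Tot}\bigl(C^*(\Pp_g X)\bigr)\to C^*\bigl(|\Pp_g X|\bigr)=C^*(P_gX)$. By construction this is a levelwise quasi-isomorphism of simplicial cochain complexes, and the main obstacle is upgrading it to a quasi-isomorphism of totalizations; this is precisely the convergence of the Bousfield--Kan / Eilenberg--Moore spectral sequence for the cosimplicial space $\Pp_g X$, which is classical under the simple connectivity hypothesis on $X$ (and indeed is the only place this hypothesis enters). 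The technical point that $C^*$ is not $\integer$-free is handled by the framework of Appendix~C, so that the algebraic identifications above are legitimate in the derived category.
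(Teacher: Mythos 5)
Your proposal is correct and follows essentially the same route as the paper: the paper likewise factors the map as $B({C^*})\otimes_{{C^*}^e}{C^*_g}\to|C^*(\Pp_gX)|\to C^*(P_gX)$, identifying the totalization of the cosimplicial cochain complex with the twisted two-sided bar complex via Eilenberg--Zilber (your K\"unneth step and matching of the cofaces, including the $g$-twisted wrap-around face), and defining the second map by $\phi_n^*$ followed by integration over $\Delta_n$, exactly your iterated-integral evaluation. The convergence step you attribute to the Bousfield--Kan/Eilenberg--Moore spectral sequence is precisely the paper's appeal to Proposition 5.3 of Bott--Segal, where simple connectivity enters, and the non-freeness of $C^*$ over $\integer$ is indeed deferred to Appendix C in both treatments.
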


\begin{proof}
Take the cosimplicial space $\Pp_gX$ an apply to it  the singular
cochains functor; we obtain the simplicial cochain complex
$C^*(\Pp_gX)$ . By Lemma 7.1 of \citep{BottSegal} we have that the
total complex (or its
 realization)  $|C^*(\Pp_gX)|$ of the simplicial cochain complex $C^*(\Pp_gX)$ is quasi-isomorphic
   to $C^*(P_gX)$, as we have that $P_gX$ is homeomorphic to the total space of $\Pp_gX$.
Then we notice that the total complex $|C^*(\Pp_gX)|$ is
quasi-isomorphic
 to the complex $B({C^*}) \otimes_{{C^*}^e} {C^*_g}$ when one applies carefully
 the Eilenberg-Zilber theorem for product spaces \citep{EilenbergZilber}. These two facts together
  imply the theorem. Let us see each one with more detail.

\begin{lemma} \label{lemma total-cochains = cochains-total}
There is a homomorphism of graded complexes
$$|C^*(\Pp_gX) | \To C^*(P_gX)$$
that when $X$ is a connected and simply connected CW-complex of
finite type it becomes a quasi-isomorphism.
\end{lemma}

\begin{proof} The lemma is a direct consequence of Proposition 5.3
and Lemma 7.1 of \citep{BottSegal}. Let us see how the homomorphism is defined.

Recall from section 5 of \citep{BottSegal} that the group of
homogeneous elements of degree $r$ of the total complex
$|C^*(\Pp_gX) |$ is the direct sum of the groups
$$C^{n+r}(\Pp_gX({\gr n})) = C^{n+r}(X^{n+1})=C^*(X^{n+1})[n]^r  \ \ \ \ \ \ \mbox{for} \ \  n\geq 0$$
and therefore
$$ |C^*(\Pp_gX) |=  \bigoplus_{n \geq 0} C^*(X^{n+1})[n].$$

Let us consider the composition of the homomorphisms
\begin{eqnarray*} C^{n+r}(X^{n+1}) \stackrel{\phi_n^*}{\To}
C^{n+r}(\Delta_n \times P_gX ) \stackrel{\int_{\Delta_n}}{\To}
C^r(P_gX)
\end{eqnarray*}
where the functions $\phi_n$ were defined in (\ref{functions phi_k})
and $\int_{\Delta_n}$ evaluates a cochain on the class
$[\Delta_n]$, that is: the composition of the Eilenberg-Zilber map
together with the evaluation on the class $[\Delta_n]$
$$C^{n+r}(\Delta_n \times P_gX ) \To C^n(\Delta_n) \otimes
C^r(P_gX) \To C^r(P_gX).$$

Because the operators $\int_{\Delta_n}$ satisfy the property
$$(-1)^n d \left( \int_{\Delta_n}\alpha \right)= \int_{\Delta_n}
d\alpha - \int_{\partial \Delta_n} \alpha$$ we have that the maps
$$C^*(X^{n+1})[n]^r =C^{n+r}(X^{n+1}) \To C^{r}(P_gX)$$ assemble to
define homomorphism of complexes of degree zero
$$F:|C^*(\Pp_gX) | \To C^*(P_gX).$$ It follows from Proposition 5.3 of
\citep{BottSegal} that $F$ is a quasi-isomorphism as the
connectivity of $X$ ($\geq 2$) is higher than the connectivity of the
simplicial set that defines $\Pp_gX$ (which in this case is the circle).
\end{proof}

\begin{lemma} \label{lemma Tor=total-cochain}
There is a homomorphism of graded complexes
$$B({C^*}) \otimes_{{C^*}^e} {C^*_g} \to |C^*(\Pp_gX) |$$ that is moreover a
quasi-isomorphism.
\end{lemma}
\begin{proof}
Take the isomorphisms
\begin{eqnarray*}
{C^*}^{n+1}  & \stackrel{\cong}{\to} & {C^*}^{n+2}\otimes_{{C^*}^e} {C^*_g}\\
(a_0 | \dots |a_n) & \mapsto & (a_0 | \dots | a_n |1)
\otimes_{{C^*}^e} (1)
\end{eqnarray*}
together with the induced differential
\begin{equation} \label{induced differential}
(a_0| \dots | a_n) \mapsto \sum_{j=0}^{n-1} (-1)^j (a_0 | \dots
|a_ja_{j+1}| \dots |a_n) + (-1)^{n} (g(a_n)a_0| \dots |a_{n-1})
\end{equation}
that comes from the bar differential on $\oplus_{k \geq 0} {C^*}^{k+2}$ (see Appendix A in section \ref{section derived}); note that the last
expression in the formula (\ref{induced differential}) comes from
the equivalences
\begin{eqnarray*}
(a_0 | \dots | a_n ) \otimes_{{C^*}^e} (1) & = & (a_0 | \dots | a_{n-1} |1) \otimes_{{C^*}^e} g(a_n) \\
&=&(g(a_n)a_0 | \dots | a_{n-1} |1) \otimes_{{C^*}^e} (1).
\end{eqnarray*}

Now let us consider the quasi-isomorphisms defined by the
Eilenberg-Zilber map
$${C^*}^{n+1} \stackrel{\simeq}{\to}  C^*(X^{n+1}) =C^* (\Pp_gX( {\gr n}))$$
and note that the induced differential ${C^*}^{n+1} \to {C^*}^n$
coming from the coface maps defined in (\ref{cofaces P_gX}) is the
same as in (\ref{induced differential}).

We can conclude that the compositions of the maps
$${C^*}^{n+2} \otimes_{{C^*}^e} {C^*_g} \stackrel{\cong}{\to} {C^*}^{n+1} \stackrel{\simeq}{\to} C^*(X^{n+1})$$
induce the maps
$${C^*}^{n+2}[n] \otimes_{{C^*}^e} {C^*_g} \stackrel{\cong}{\to} {C^*}^{n+1}[n] \stackrel{\simeq}{\to} C^*(X^{n+1})[n]$$
that induce the desired quasi-isomorphism
$$B({C^*}) \otimes_{{C^*}^e} {C^*_g} \stackrel{\simeq}{\to} |C^*(\Pp_gX)|.$$
\end{proof}

Now we can finish with the proof of Theorem \ref{theorem
Tor=H^*(loops)} by composing the quasi-isomorphisms defined in the
Lemmas \ref{lemma total-cochains = cochains-total} and \ref{lemma
Tor=total-cochain}
$$B({C^*}) \otimes_{{C^*}^e} {C^*_g} \stackrel{\simeq}{\to} |C^*(\Pp_gX)| \stackrel{\simeq}{\to} C^*(P_gX)$$
that moreover induce the isomorphisms
$$\Tor_{{C^*}^e}({C^*},{C^*_g}) \stackrel{\cong}{\to} H^*(|C^*(\Pp_gX)|)  \stackrel{\cong}{\to} H^*(P_gX).$$
\end{proof}

We can assemble all the ${C^*_g}$'s into ${C^*} \#G$ thus obtaining

\begin{cor}
If $X$ is connected and simply connected and $G$ is finite, then
there is a quasi-isomorphism
$$B(C^*(X)) \otimes_{C^*(X)^e} C^*(X) \#G \stackrel{\simeq}{\to} C^*(P_GX)$$
that induces the isomorphism
$$\Tor_{C^*(X)^e}(C^*X, C^*(X) \#G) \cong  H^*(P_GX).$$
\end{cor}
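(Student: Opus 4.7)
The plan is to reduce the statement to Theorem \ref{theorem Tor=H^*(loops)} applied separately to each $g \in G$ by exploiting two compatible direct sum decompositions, one algebraic and one geometric.

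On the algebraic side, recall from (\ref{definition A_g}) that $C^*(X) \# G \cong \bigoplus_{g \in G} C^*_g$ as a $C^*(X)^e$-module, where each $C^*_g = \{x \otimes g : x \in C^*(X)\}$ inherits the induced $C^*(X)^e$-action. Since tensor products commute with direct sums in each variable, we obtain a natural isomorphism of complexes
$$B(C^*(X)) \otimes_{C^*(X)^e} C^*(X) \# G \;\cong\; \bigoplus_{g \in G} B(C^*(X)) \otimes_{C^*(X)^e} C^*_g.$$

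On the geometric side, Definition \ref{definition loop groupoid} gives the decomposition $P_GX = \bigsqcup_{g \in G} P_gX \times \{g\}$ as a topological space. Here is where finiteness of $G$ enters: for a \emph{finite} disjoint union the singular cochain functor converts coproducts into direct sums, so
$$C^*(P_GX) \;\cong\; \bigoplus_{g \in G} C^*(P_gX).$$
(For infinite $G$ one would only get the product, which would be a genuine obstruction.) First I would summand-by-summand apply the quasi-isomorphism $B(C^*(X)) \otimes_{C^*(X)^e} C^*_g \xrightarrow{\simeq} C^*(P_gX)$ produced in Theorem \ref{theorem Tor=H^*(loops)}, which is available because $X$ is connected and simply connected, and then assemble these over $g \in G$ to obtain the claimed map. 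Taking cohomology and recalling that $B(C^*(X))$ is a cofibrant replacement of $C^*(X)$ as a $C^*(X)^e$-module yields the isomorphism $\Tor_{C^*(X)^e}(C^*(X), C^*(X) \# G) \cong H^*(P_GX)$.

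The only step that requires a moment's care is checking that the assembled map really is the natural one induced by the inclusions $C^*_g \hookrightarrow C^*(X)\#G$ and $P_gX \hookrightarrow P_GX$; this is immediate from the constructions in Lemmas \ref{lemma total-cochains = cochains-total} and \ref{lemma Tor=total-cochain}, since both the cosimplicial model $\Pp_gX$ and the algebraic bar-tensor construction are natural in $g$. No genuine obstacle arises; the corollary is essentially a bookkeeping consequence of Theorem \ref{theorem Tor=H^*(loops)} together with the finiteness of $G$.
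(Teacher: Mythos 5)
Your proof is correct and is essentially the paper's own argument: the corollary is obtained by assembling the quasi-isomorphisms $B(C^*(X))\otimes_{C^*(X)^e}C^*_g\to C^*(P_gX)$ of Theorem \ref{theorem Tor=H^*(loops)} over the finite decompositions $C^*(X)\#G\cong\bigoplus_{g\in G}C^*_g$ and $P_GX=\bigsqcup_{g\in G}P_gX$. Your remark on where finiteness of $G$ is used (direct sum versus product of cochain complexes) is a correct and worthwhile clarification of a point the paper leaves implicit.
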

The previous corollary together with Theorem \ref{theorem decomposition HH} implies that
\begin{proposition}
If $X$ is connected and simply connected and $G$ is finite, then
there is an isomorphism of graded groups
$$HH_*(C^*(X) \#G, C^*(X) \#G) \cong \Tor_{\integer G}(\integer, C^*(P_GX)),$$
and whenever $G$ acts freely on $X$ the isomorphism becomes
$$HH_*(C^*(X) \#G, C^*(X) \#G) \cong H^*\left( C^*(P_GX)^G \right).$$
\end{proposition}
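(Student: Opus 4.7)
The plan is to derive this proposition as an essentially formal consequence of two results already established in the paper: Theorem \ref{theorem decomposition HH} and the immediately preceding Corollary (the one identifying $B(C^*(X)) \otimes_{C^*(X)^e} C^*(X)\#G$ with $C^*(P_GX)$). First I would apply Theorem \ref{theorem decomposition HH} with $\Aa = C^*(X)$ to write
$$HH_*(C^*(X)\#G, C^*(X)\#G) \cong \Tor^*_{\integer G}\bigl(\integer,\ C^*(X) \stackrel{L}{\otimes}_{C^*(X)^e} C^*(X)\#G\bigr).$$
Then I would substitute the identification provided by the preceding Corollary to obtain the first displayed isomorphism. The only subtlety is that Theorem \ref{theorem decomposition HH} requires this last derived tensor product to be understood as a $\integer G$-module via the conjugation action described in the proof (the diagonal action on $B(C^*(X))$ combined with $g\cdot(x\otimes k) = (g(x)\otimes gkg^{-1})$ on $C^*(X)\#G$), so I must check that the quasi-isomorphism $B(C^*(X))\otimes_{C^*(X)^e} C^*(X)\#G \simeq C^*(P_GX)$ of the Corollary is actually $G$-equivariant with respect to the $G$-action on $P_GX$ from Definition \ref{definition loop groupoid}.

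The equivariance check reduces to tracing the $G$-action through the two constituent quasi-isomorphisms of Lemmas \ref{lemma total-cochains = cochains-total} and \ref{lemma Tor=total-cochain}. The decomposition $C^*(X)\#G = \bigoplus_g C^*(X)_g$ matches the decomposition $P_GX = \bigsqcup_g P_gX$, and under the diagonal action both the Eilenberg-Zilber map used in Lemma \ref{lemma Tor=total-cochain} and the integration-over-simplices map in Lemma \ref{lemma total-cochains = cochains-total} are manifestly natural in the $G$-action on $X$ (applied simultaneously to $X$ and $P_gX$ via $\phi_k(t,\gamma g) = \phi_k(t,\gamma)g$). I expect this to be the main technical obstacle, but it should amount to a careful but routine bookkeeping check rather than anything genuinely novel.

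For the second isomorphism, I would first observe that if $G$ acts freely on $X$, then $G$ acts freely on $P_gX$ for every $g$: indeed if $(f,k)g = (f,k)$ then $f(t)g = f(t)$ for all $t$, forcing $g=e$. Since $G$ is finite and acts freely, $P_GX$ admits a $G$-CW structure whose cellular chain complex consists of free $\integer G$-modules. Using the fact that $\integer G$ is a Frobenius algebra for finite $G$ (so $\Hom_\integer(\integer G,\integer)\cong \integer G$ as $\integer G$-modules), the cellular cochain complex is also a complex of free $\integer G$-modules, and thus $C^*(P_GX)$ is $\integer G$-projective up to quasi-isomorphism. Therefore the higher Tor groups vanish and
$$\Tor_{\integer G}\bigl(\integer, C^*(P_GX)\bigr) \cong H^*\bigl(\integer \otimes_{\integer G} C^*(P_GX)\bigr) = H^*\bigl(C^*(P_GX)_G\bigr).$$
Finally, for a finite group acting freely on a projective module, the norm map $N(m) = \sum_{g\in G} gm$ yields an isomorphism between coinvariants and invariants, so $C^*(P_GX)_G \cong C^*(P_GX)^G$, giving the desired formula. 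The only delicate point here is confirming that the projective $\integer G$-model for $C^*(P_GX)$ can be chosen compatibly, which is standard given the free cellular model.
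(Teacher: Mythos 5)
Your proposal is correct and follows exactly the route the paper intends: the paper gives no explicit proof, simply asserting that the statement follows from Theorem \ref{theorem decomposition HH} together with the preceding Corollary, and the free-action case is handled just as in the earlier corollary via the coincidence of invariants and coinvariants for (co)induced $\integer G$-modules. Your extra care about the $G$-equivariance of the quasi-isomorphism and the freeness of the induced action on $P_GX$ fills in details the paper leaves implicit; the only cosmetic imprecisions are that $G$ permutes the pieces $P_gX$ rather than acting on each one, and that $C^n(P_GX)$ is coinduced (hence $\Tor$-acyclic with norm isomorphism) rather than literally projective, neither of which affects the argument.
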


\subsection{Chains on loops of $[X/G]$}

From the previous section we have the quasi-isomorphisms
\begin{equation} \label{q.i.'s of C^*P_gX}
B({C^*}) \otimes_{{C^*}^e} {C^*_g} \stackrel{\cong}{\to} \bigoplus_{n
\geq 0} {C^*}^{n+1}[n] \stackrel{\simeq}{\to} |C^*(\Pp_gX)|
\stackrel{\simeq}{\to} C^*(P_gX).\end{equation} If $P_gX$ is of
finite type i.e. the cohomology is finitely generated in each
degree, we can dualize the maps of (\ref{q.i.'s of C^*P_gX}) thus
obtaining quasi-isomorphisms
\begin{equation*}
C_*(P_gX) \stackrel{\simeq}{\to} |C^*(\Pp_gX)|^\vee
\stackrel{\simeq}{\to} \bigoplus_{n \geq 0} {C^*}^{n+1}[n]^\vee
\stackrel{\simeq}{} \bigoplus_{n \geq 0}
\HHom_{\integer}({C^*}^{n}[n], {C^*_g}^\vee)
\end{equation*}
where $T^\vee := \Hom_\integer(T , \integer)$ and the last map is
induced by the isomorphisms $\HHom_{\integer}({C^*}^{n},{C^*}^\vee_g) \to
{{C^*}^{n+1}}^\vee $ with $\phi \mapsto \bar{\phi}$ and
$$\bar{\phi}(a_1| \dots |a_n|b) = \phi(a_1|\dots|a_n)(b).$$

Whenever $X$ is a compact closed oriented manifold of dimension
$l$, there is a homomorphism  of graded groups
$$C^*(X) \to C_{l-*}(X)$$ that induces an isomorphism in cohomology, and that moreover induces
an structure of $C^*(X)^e$-module on $C_{l-*}(X)$. Therefore we could
apply the natural isomorphism
$$\HHom_{\integer}({C^*}^{n},{C^*_g}^\vee) \cong \HHom_{{C^*}^e}({C^*}^{n+2},{C^*_g}^\vee)$$
that yields then that  there is a quasi-isomorphism
$$C_*(P_gX) \stackrel{\simeq}{\to} \HHom_{{C^*}^e}(B({C^*}),{C^*_g}^\vee)$$
that induces the isomorphism
$$H_*(P_gX) \cong H^* \HHom_{{C^*}^e}(B({C^*}),{C^*_g}^\vee).$$

The previous proof is very sketchy and not rigorous at all; we
postpone its formal proof to the next sections.

In the next section we will show explicitly how the singular
chains of $P_gX$ are related to the complex
$\HHom_{{C^*}^e}(B({C^*}),{C^*_g})$, and moreover we will show how the
ring structure of  $H_*(P_GX)$ defined in \citep{LUX} is isomorphic
to the ring  $H^* \HHom_{{C^*}^e}({B(C^*}),{C^*} \#G)$ whenever $G$ is finite
and $X$ is a compact, connected and simply connected oriented
manifold.

\section{String topology for orbifolds}\label{section OST}
This section is devoted to the construction of the  topological counterpart for
 the $G$-module dg-ring
$$\HHom_{{C^*}^e}(B({C^*}),{C^*}\#G)$$
whenever we have a global quotient orbifold $[M/G]$ with $M$ a differentiable, oriented and compact manifold, $G$ finite group acting by orientation preserving diffeomorphisms and ${C^*} = C^*(M, \integer)$.

Section 4.1 generalizes the construction of \citep{Cohen-Atiyahduality} to the equivariant
 case to give a symmetric ring spectra over pointed  $G$-spaces model of the Thom spectrum $M^{-TM}$.

In section \ref{subsection OST} we will start by recalling the construction of the
 orbifold string topology of $[M/G]$ performed in \citep{LUX}, which is based on the union of spectra
$$P_GM^{-TM}:= \bigsqcup_{g \in G}P_gM^{-TM}$$
and the maps
\begin{eqnarray} \label{product spectra}
P_gM^{-TM} \wedge P_hM^{-TM} \to P_{gh}M^{-TM}.
\end{eqnarray}

Then in section \ref{section cosimplicial spectra} we will construct for each $g$
 a cosimplicial spectrum $\Pf_gM$ such that its total spectrum is homeomorphic to $P_gM^{-TM}$.
  With the cosimplicial spectrum $\Pf_gM$ at hand, we will construct a
  quasi-isomorphism between $C_*(P_gM^{-TM})$ and $\HHom_{{C^*}^e}(B({C^*}),{C^*_g})$
  which will assemble into a quasi-isomorphism
$$C_*(P_GM^{-TM}) \stackrel{\simeq}{\To} \HHom_{{C^*}^e}(B({C^*}), {C^*} \#G).$$

In section \ref{section multiplicative structures} we will focus on multiplicative issues.
 We will construct  maps of cosimplicial spectra $$\Pf_gM \wedge \Pf_hM \to \Pf_{gh}M$$
 that will be compatible with the map of (\ref{product spectra}) and we will show that
 after passing to chains, it will be compatible with the ring structure of $$\HHom_{{C^*}^e}(B({C^*}),{C^*} \#G).$$
 We will show that the quasi-isomorphism
 $$C_*(P_GM^{-TM}) \stackrel{\simeq}{\to} \HHom_{{C^*}^e}(B({C^*}), {C^*} \#G)$$
 will be a $G$-equivariant map of $A_\infty$ rings, and
 in particular we will be able show that there is $G$-equivariant isomorphism of rings
$$H_*(P_GM^{-TM}) \cong H^* \HHom_{{C^*}^e}(B({C^*}),{C^*}\#G).$$

\subsection{Multiplicative properties of Atiyah Duality} \label{subsection MPAD}
This section is based on the construction done in \cite{Cohen-Atiyahduality} of a
 symmetric ring spectra model of $M^{-TM}$.  Recall the classical construction of
  the spectrum $M^{-TM}$, given an embedding $e :M \hookrightarrow \mathbb{R}^k$,
  let  $\nu(e) \to M$ the normal bundle of the embedding and denote by $M^{\nu(e)}$
  its Thom construction. Define the spectrum
$$M^{-TM}: = \Sigma^{-k} M^{\nu(e)}$$
as the $k$-th de-suspension of the Thom space $M^{\nu(e)}$.

The Spanier-Whitehed dual of $M_+$ is the function spectrum $F(M,S)$ and this,
 by Atiyah duality, can be identified with $M^{-TM}$. Therefore, the diagonal
 map $\Delta : M \to M \times M$ induces a map of spectra
$$\Delta^*: M^{-TM} \wedge M^{-TM} \to M^{-TM}$$
which makes $M^{-TM}$ into a ring spectrum in the stable homotopy category.
 Following \cite{Cohen-Atiyahduality}, we rigidify this ring spectrum using
 the machinery of symmetric spectra in general model categories of \cite{Hovey_spectraand}
 to take into account the $G$-action.

If $\mathcal{C}$ is a symmetric monoidal model category and $K$ is a cofibrant
object of $\mathcal{C}$, Hovey defined categories $Sp^{\mathbb{N}}(\mathcal{C},K)$
and $Sp^\Sigma(\mathcal{C},K)$ of spectra and symmetric spectra over $\mathcal{C}$.
 A spectrum is a sequence $\{X_n\}$ of elements of $\mathcal{C}$ together
  with morphisms $\epsilon_{n,m} :  X_n \otimes K^{\otimes m} \to X_{n+m}$
  that make $\{X_n\}$ a module over $Sym(K)=(S^0,K,K^{\otimes 2},\cdots,K^{\otimes n},\ldots)$.
   Similarly, symmetric spectra  are symmetric sequences $\{X_n\}$ with
   $\Sigma_n \times \Sigma_m$-equivariant morphisms
   $\epsilon_{n,m} :  X_n \otimes K^{\otimes m} \to X_{n+m}$ that make it a $Sym(K)$-module.

For our purposes, we take $\mathcal{C}$ the model category of based topological
 spaces with $G$-action with the (fine) equivariant model structure and $K=S^V$,
  the one-point compactification of the representation $V$.

Recall from \cite{illman}, that every manifold with the action of a finite group
 admits the structure of a $G-CW$-complex, and therefore $S^V$ is cofibrant
 in the (fine) equivariant model structure.

Given a manifold $M$ with a $G$-action and an equivariant embedding
$ e : M \hookrightarrow V$ into an orthogonal representation of $G$,
 we will define symmetric ring spectra over $\mathcal{C}$, $M^{-\tau}(e)$
  and $F(e)$, together with a map $\alpha : M^{-\tau}(e) \rightarrow F(e)$
   that implements Atiyah duality at the level of symmetric ring spectra.

Let $\nu(e) \subseteq V$ be an equivariant tubular neighborhood of the
embedding $e$. Consider the space $L(V,V^n)^G$ of equivariant linear
 embeddings $\phi: V \rightarrow V^n$ with a trivial $G$-action and
 a $\Sigma_n$-action coming from the permutation action on $V^n$.

For $\phi \in L(V,V^n)^G$, let $\theta(\phi)$ an equivariant tubular
 neighborhood of $\phi \circ e$. Define the sequence of pointed $G$-spaces
$$
\tilde{M}_n^{-\tau}(e)  = \{ (\phi,x ) \mid \phi \in L(V,V^n)^G, x \in V^n / \left ( V^n - \theta(\phi) \right ) \}
$$
A point in $\tilde{M}_n^{-\tau}(e)$ is an equivariant linear
embedding $\phi : V \rightarrow V^n$ together with a point in the
 Thom space of the normal bundle to the embedding $\phi \circ e :  M \hookrightarrow V^n$.
  We have a fiber bundle $\tilde{M}_n^{-\tau}(e) \rightarrow L(V,V^n)^G$
  with fiber the Thom space $V^n/\left ( V^n - \theta(\phi)\right )$.
   The $\Sigma_n$-action on $V^n$ induces action on $\tilde{M}_n^{-\tau}(e)$.

Consider the sequence of pointed $G$-spaces
$$
M_n^{-\tau}(e) = \tilde{M}_n^{-\tau}(e) / \{(\phi, \infty) \mid \phi \in L(V,V^n)^G \}
$$
where $\infty \in V^n/\left ( V^n - \theta(\phi)\right )$ is the
base point. The $G$-spaces $M_n^{-\tau}(e)$ form a symmetric
spectrum  over pointed $G$-spaces.

Since we have a bundle $\tilde{M}_n^{-\tau}(e) \rightarrow L(V,V^n)^G $
 and the connectivity of the space of equivariant embeddings
  from $V$ to $V^n$ tends to infinity, forgetting the symmetric
   structure, $M^{-\tau}(e)$  is equivalent to $\Sigma_V^\infty M^{\nu(e)}$.
   Where $\Sigma_V^\infty$ is the left adjoint to the functor that
    sends a spectrum over pointed $G$-spaces $\{X_n\}$ to the $G$-space $X_1$.

Similarly as in \citep{Cohen-Atiyahduality}, given an equivariant
 embedding $e: M \hookrightarrow V$ we can define symmetric ring spectra over pointed $G$-spaces $F(e)$ by,
$$
F_n(e) = \left \{ (\phi, \sigma) \mid \phi \in  L(V,V^n)^G, \sigma : \nu(e) \rightarrow S_x^{\oplus_n V} \right  \} / \{(\phi, \sigma_\infty)\}
$$
where, for $x \in \nu(e)$,  $S_x^{\oplus_n V}$ is the
compactification of a ball of radius $\epsilon$ around $\phi(e(x))$ in $V^n$
 and  $\sigma_\infty$ is the constant function at infinity.

The projection maps
$F_n(e) \cong L(V,V^n)^G_+ \wedge F(\nu(e),S^{\oplus_n V}) \rightarrow F(\nu(e),S^{\oplus_n V})$,
 together with the homotopy equivalence $F(\nu(e), S^{\oplus_n V}) \rightarrow F(M, S^{\oplus_n V})
$ induce a $\pi_*$-equivalence of symmetric ring spectra over pointed $G$-spaces
$$
\overline{\rho} : F(e) \rightarrow F(M,Sym(S^V)).
$$
Therefore, $F(e)$ is our model for the Spanier-Whitehead dual
of $M_+$ in the category $Sp^\Sigma(\mathcal{C},Sym(S^V))$.

Define the  Atiyah duality map
$$
\begin{array}{cc}
& \alpha : M^{-\tau}(e) \rightarrow F(e) \\
& (\phi, x) \rightarrow (\phi, \sigma_x)
\end{array}
$$
by $\sigma_x(y)=x-\phi(y)$ if $x$ belongs to the ball of radius
$\epsilon$ around $\phi(y)$, and to $\infty$ otherwise. This is a map of symmetric ring spectra.

The map $\alpha$ is compatible with the classical equivariant Atiyah
duality map and therefore we have a commutative diagram of spectra over pointed $G$-spaces
$$
\xymatrix{
M^{-\tau}(e)  \ar[rr]^\alpha &  & F(e) \ar[d]^{\overline{\rho}} \\
\Sigma_V^\infty M^{\nu(e)} \ar[u] \ar@{.>}[rr]& & F(M,Sym(S^V))
}
$$
where the vertical arrows are $\pi_*$-equivalences of symmetric spectra over pointed $G$-spaces and the dotted arrow is the equivariant Atiyah duality equivalence \cite{mayalask}, proving that $\alpha$ is a
$\pi_*$-equivalence of symmetric ring spectra over pointed $G$-spaces.

By an equivariant version of the Whitney embedding theorem \cite{equivdifftop}, the homotopy type of the spectra
$\Sigma_V^\infty M^{\nu(e)}$
does not depend on the embedding and we will denote it by $M^{-TM}$ when no confusion arises.

The prolongation of the singular simplicial set functor gives for
every symmetric ring spectrum over pointed $G$-spaces, a
(simplicial)  $H\mathbb{Z}$-symmetric algebra spectra over pointed
$G$-simplicial sets. In \citep{Shipley04hz-algebraspectra}, the
comparison between $H\mathbb{Z}$-symmetric modules and chain
complexes is established. There is a zig-zag of weak monoidal
Quillen equivalences  between (simplicial) $H\mathbb{Z}$-symmetric
algebra spectra and differential graded algebras, and similarly,
between their module categories. In our setting, for every
symmetric spectrum over pointed $G$-spaces we have a chain complex
with a $G$-action, and for every symmetric ring spectra (module)
over pointed $G$-spaces, we have a differential graded algebra
(module) with $G$-action.

Recall that the multiplication map
$$
\mu : M^{-\tau}(e) \wedge M^{-\tau}(e) \rightarrow M^{-\tau}(e)
$$
is dual to the inclusion map
$$
\begin{array}{cc}
\Delta: & M \rightarrow M \times M \\
& x \rightarrow (x,x)
\end{array}
$$
that at the level of chains
$$\Delta^*: C_{-*}(M^{-\tau}(e)) \otimes C_{-*}(M^{-\tau}(e)) \to C_{-*}(M^{-\tau}(e))$$
is dual to the cup product on cochains
$$\Delta^* : C^*(M) \otimes C^*(M) \to C^*(M).$$

Now, for $g \in G$, considering the twisted diagonals
$$
\begin{array}{cc}
\Delta_g: & M \rightarrow M \times M \\
& x \rightarrow (x,xg)
\end{array}$$
we have dual maps
$$
l_g : M^{-\tau}(e) \wedge M^{-\tau}(e) \rightarrow M^{-\tau}(e)
$$
and
$$
l_g : F(e) \wedge F(e) \rightarrow F(e)
$$
After applying chains (under the Atiyah equivalence) these maps correspond to the map on cochains,
$$
\begin{array}{cc}
\Delta_g^* & : C^*(M) \otimes C^*(M) \rightarrow C^*(M) \\
& a \otimes b \rightarrow a\cup g(b)
\end{array}
$$
The map $l_g$ gives a bimodule structure on $M^{-\tau}(e)$ over the
symmetric ring spectrum $M^{-\tau}(e)$ that we will denote by $M^{-\tau}(e)_g$
and a bimodule structure on $F(e)$ over $F(e)$ that we will denote $F(e)_g$.

Note that for the twisted diagonals to be equivariant we have to restrict
 to $C_G(g)$, the centralizer of $g$ in $G$.

The $G$-module differential graded bimodule $C_{-*}(M^{-\tau}(e))$
over $C_{-*}(M^{-\tau}(e))$ induced by $l_g$ and the usual multiplication,
 will be denoted by $C_{-*}(M^{-\tau}(e))_g$. Similarly, using $l_g$ we can
  twist the bimodule structures on $C_{-*}(M^{-\tau}(e))$ over $C_{-*}(F(e))$ and
  using $\Delta_g$ we can twist the bimodule structures on $C^*(M)$ over $C^*(M)$.
   We will denote these bimodule structures $C_{-*}(M^{-\tau}(e))_g$ and $C^*(M)_g$ respectively.

Consider the maps on Thom spectra
$$
\Delta^* :  M^{-\tau}(e) \rightarrow M^{-\tau}(e) \wedge \nu(e)_+ \ \ \ \Delta_g^* :  M^{-\tau}(e) \rightarrow \nu(e)_+ \wedge M^{-\tau}(e)
$$
induced by the map of bundles $ \Delta_* : \nu(e) \rightarrow \pi_1^*(\nu(e))$
and $\Delta_{g*} : \nu(e) \rightarrow    \pi_2^*(\nu(e))$ over the diagonal maps
$\Delta: M \to M\times M$ and $\Delta_g: M \to M\times M$. These induce a
bimodule structure on $C_{-*}(M^{-\tau}(e))$ over $C^*(\nu(e))$. We will denote
 this bimodule $C_{-*}(M^{-\tau}(e))_g$,

We have a $C_G(g)$-equivariant chain homotopy induced by the evaluation map
$$
ev_* : C_{-*}(F(\nu(e),Sym(S^V))) \rightarrow C^*(\nu(e))
$$
that gives an equivariant chain equivalences of differential graded algebras
$$
C_{-*}(F(e)) \rightarrow C_{-*} (F(\nu(e),Sym(S^V)) \rightarrow C^*(\nu(e))
$$
compatible with the bimodule structures on $C_{-*}(M^{-\tau}(e))_g$.

Since the map $\alpha$ is a $\pi_*$-equivalence of symmetric ring spectra
 we have equivariant chain homotopy ring homomorphisms
$$
 C_{-*}(M^{-\tau}(e)) \rightarrow C^*(F(M,Sym(S^V))) \rightarrow C^*(M)
$$
which induces the usual Poincare duality map.

\begin{theorem} \label{mio} Given a manifold $M$ with an action of a finite
group $G$ embedded equivariantly in a representation $V$, then there are
 $C_G(g)$-equivariant chain homotopy equivalences of differential graded algebras
$$
C_{-*}(F(e))  \rightarrow C^*(\nu(e))
$$
and
$$
C_{-*}(M^{-\tau}(e)) \rightarrow C_{-*}(F(e))
$$
which are compatible with the bimodule structures on $C_{-*}(M^{-\tau}(e))_g$.
\end{theorem}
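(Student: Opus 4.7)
The plan is to assemble the statement from the three equivalences already constructed in Section 4.1, then verify $C_G(g)$-equivariance and compatibility with the twisted bimodule structures. More precisely, my proof would consist of: (i) recalling the chain map $\alpha_*$ induced from the Atiyah duality map of symmetric ring spectra $\alpha\colon M^{-\tau}(e)\to F(e)$ via Shipley's zig-zag between $H\integer$-symmetric algebra spectra and differential graded algebras; (ii) recalling the composite chain map
\[
C_{-*}(F(e)) \xrightarrow{\overline{\rho}_*} C_{-*}(F(\nu(e),Sym(S^V))) \xrightarrow{ev_*} C^*(\nu(e));
\]
and (iii) checking that each of these underlies a $C_G(g)$-equivariant dga map compatible with the $l_g$-twisted bimodule structures. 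Steps (i) and (ii) were already argued to be $\pi_*$-equivalences of symmetric ring spectra over pointed $G$-spaces; Shipley's monoidal Quillen equivalences then carry these over to chain homotopy equivalences of dgas.

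For the $C_G(g)$-equivariance I would argue as follows. The ring spectrum maps $\alpha$ and $\overline{\rho}$ were constructed entirely from the embedding $e$ and the representation $V$, both of which carry $G$-actions; thus both maps are honestly $G$-equivariant. The twisted product $l_g$, however, is only equivariant with respect to the centralizer $C_G(g)$, because the twisted diagonal $\Delta_g(x)=(x,xg)$ commutes with $h\in G$ if and only if $hxg=hxgh^{-1}h$, i.e.\ $h\in C_G(g)$. Consequently the bimodule structures $M^{-\tau}(e)_g$, $F(e)_g$, and the chain-level bimodule $C_{-*}(M^{-\tau}(e))_g$ over $C^*(\nu(e))$ are well defined only after restricting the group action, and all maps above are automatically $C_G(g)$-equivariant on that restriction.

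The genuine work is in step (iii): verifying that the chain equivalences intertwine the $g$-twisted bimodule structures. The key identification is that under the equivalence $C_{-*}(F(e))\simeq C^*(\nu(e))\simeq C^*(M)$ the multiplication map $l_g$ on $F(e)\wedge F(e)\to F(e)$, which is Spanier-Whitehead dual to $\Delta_g$, must correspond to the twisted cup product $a\otimes b\mapsto a\cup g(b)$ on $C^*(M)$. This follows by chasing the commutative square
\[
\xymatrix{
M^{-\tau}(e)\wedge M^{-\tau}(e) \ar[r]^-{l_g} \ar[d]_{\alpha\wedge\alpha} & M^{-\tau}(e) \ar[d]^{\alpha} \\
F(e)\wedge F(e) \ar[r]^-{l_g} & F(e)
}
\]
through Shipley's equivalence and comparing with the classical identification of $\alpha$ with Poincar\'e duality: the untwisted case ($g=e$) is precisely the Cohen-Jones identification of $\Delta^*$ with the cup product, and the $g$-twisted case differs only by precomposing one factor with the $G$-action on $C^*(M)$, which tracks through the construction of $l_g$ from $\Delta_g$. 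I expect this last compatibility to be the main obstacle, since it requires working at the dga level (not just homotopy-commutative) and carefully tracking the $C_G(g)$-action through the Shipley zig-zag; everything else is essentially bookkeeping built on the results already established in the section.
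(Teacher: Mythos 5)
Your proposal follows essentially the same route as the paper: the paper itself treats Theorem \ref{mio} as a summary of the constructions in Section 4.1, obtaining the first equivalence from the composite $C_{-*}(F(e)) \to C_{-*}(F(\nu(e),Sym(S^V))) \to C^*(\nu(e))$ induced by the projection and evaluation maps, the second from the Atiyah duality map $\alpha$ via Shipley's zig-zag, and the $C_G(g)$-equivariance and twisted-bimodule compatibility exactly as you describe (the twisted diagonal $\Delta_g$ commutes only with the centralizer, and the $l_g$-structures are dual to $\Delta_g$). Your identification of step (iii) as the substantive point is accurate; the paper asserts this compatibility from the fact that all bimodule structures arise as duals of the twisted diagonals, just as you propose to verify.
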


\subsection{Orbifold string topology} \label{subsection OST} This section
 is based on the construction done in \citep{LUX} of the orbifold loop spectra $P_GM^{-TM}$

Now, for $t \in \real$ let $e_t :P_gM \to M$ be the evaluation of a map at the time $t$: $e_t(f) := f(t)$.
Consider the space of composable maps
$$P_gM \fiberprod{1}{0} P_hM =\{ (\phi , \psi) \in P_gM \times P_hM  | \phi(1)=\psi(0) \}$$
 and note that they fit into the  pullback square
$$\xymatrix{ P_gM \fiberprod{1}{0} P_hM \ar[d]^{e_0 \circ \pi_1} \ar[r] & P_gM \times P_hM \ar[d]^{e_0 \times e_0}\\
 M \ar[r]^{\Delta_g} & M \times M
}$$
where the downward arrow $e_0 \circ \pi_1$ is the evaluation at zero of the first map.

The normal bundle of the upper horizontal map becomes isomorphic to the pullback
under $e_0\circ \pi_1$ of the normal bundle of the map $\Delta_g$ and therefore
we can construct the Thom-Pontrjagin collapse maps that makes the diagram commutative

$$\xymatrix{ P_gM \times P_hM \ar[d]^{e_0 \times e_0} \ar[r] & P_gM \fiberprod{1}{0} P_hM^{\pi_1^*e_0^*N_g} \ar[d]^{e_0 \circ \pi_1}   \\
 M \times M \ar[r] & M^{N_g}
}$$
and by inverting the tangent bundle of $M\times M$ on the lower left hand we obtain the commutative square
$$\xymatrix{ P_gM^{-e_0^*TM} \wedge P_hM^{-e_0^*TM} \ar[d]^{e_0 \times e_0} \ar[r] & P_gM \fiberprod{1}{0} P_hM^{-\pi_1^*e_0^*TM} \ar[d]^{e_0 \circ \pi_1}   \\
 {M^{-TM} \wedge M^{-TM}} \ar[r]^{\Delta_g^*} & M^{-TM}.
}$$

The concatenation of paths in $P_gM \fiberprod{1}{0} P_hM$ produces a map
$$P_gM \fiberprod{1}{0} P_hM \to P_{gh}M$$
which induces the map of spectra
$$P_gM \fiberprod{1}{0} P_hM^{-\pi_1^*e_0^*TM}\to P_{gh}M^{-e_0^*TM}$$
that composed with the upper horizontal map of diagram defines the map
$$\mu_{g,h}: P_gM^{-e_0^*TM} \wedge P_hM^{-e_0^*TM} \to P_{gh}M^{-e_0^*TM}$$
which fits into the commutative diagram of spectra
 $$\xymatrix{ P_gM^{-e_0^*TM} \wedge P_hM^{-e_0^*TM} \ar[d]^{e_0 \times e_0} \ar[rr]^<<<<<<<<{\mu_{g,h}} && P_{gh}M^{-e_0^*TM} \ar[d]^{e_0 }   \\
 {M^{-TM} \wedge M^{-TM}} \ar[rr]^<<<<<<<<<<<<<<{\Delta_g^*} && M^{-TM}.
}$$

If we denote the spectrum $P_gM^{-TM}$ as
$$P_gM^{-TM} := P_gM^{-e_0^*TM},$$
we define
$$P_GM^{-TM} : =\bigsqcup_{g \in G} P_gM^{-TM}$$
which by assembling the maps $\mu_{g,h}$ we define a map
\begin{eqnarray} \label{map mu}
\mu : P_GM^{-TM} \wedge P_GM^{-TM} \to P_GM^{-TM}.
\end{eqnarray}
where we have denoted
$$P_GM^{-TM} \wedge P_GM^{-TM} := \bigsqcup_{(g,h) \in G \times G} P_gM^{-TM} \wedge P_hM^{-TM}.$$

Note that the compositions $\mu \circ (\mu \times 1) $ and $ \mu \circ ( 1 \times \mu)$ do not agree,
 but they are homotopically equivalent.

Recall from Definition \ref{definition loop groupoid} that $P_GM$ is a $G$-space where
for $k \in G$ and $f \in P_gM$ we have that
 $fk \in P_{k^{-1}gk}M$. Because the embedding of $M$ to the representation
 $e:M \hookrightarrow V$ is $G$-equivariant, the spectra $M^{-TM}$
acquires an action of $G$, and moreover, for $k \in G$ there is an induced
map $P_gM^{-TM} \to P_{k^{-1}gk}M^{-TM}$ of spectra, where
the based point in $P_gM^{-TM}$ is mapped to the based point in $P_{k^{-1}gk}M^{-TM}$.

It follows that for $k \in G$ we have the commutative diagram
$$\xymatrix{ P_gM^{-TM} \wedge P_hM^{-TM} \ar[d]^{k \times k} \ar[rrr]^<<<<<<<<<<<<<<<<<<<{\mu_{g,h}} &&& P_{gh}M^{-TM} \ar[d]^{k }   \\
 P_{k^{-1}gk}M^{-e_0^*TM} \wedge P_{k^{-1}hk}M^{-TM}  \ar[rrr]^<<<<<<<<<<<<<<<{\mu_{k^{-1}gk,k^{-1}hk}} &&& P_{k^{-1}ghk}M^{-TM}
 }$$
 which implies that the map $\mu: P_GM^{TM} \wedge P_GM^{-TM} \to P_GM^{-TM}$ is $G$ equivariant.

 Therefore we could think of $P_GM^{-TM}$ as ring spectrum with a $G$-action in the
  homotopy category, taking homology, we get a  $G$-module ring $H_*(P_GM^{-TM};\integer)$
   whose product structure was called the $G$-string product in \citep{LUX},
   and the induced ring structure on the invariant set
 $$H_*(P_GM^{-TM}; \rational)^G$$
 was called in \citep{LUX} the {\bf orbifold string topology ring}.

\subsection{Cosimplicial spectra} \label{section cosimplicial spectra}

In this section we will describe a cosimplicial model for the spectra $P_gM^{-TM}$
that will allow us to give a natural way
of relating the singular chains  $C_*(P_gM^{-TM})$ to the complex $\HHom_{{C^*}^e}(B({C^*}),{C^*_g})$.
This section is a generalization of section 3 of \citep{CohenJones} and we will mimic their construction.

Recall from (\ref{functions phi_k}) the functions $\phi_k$
\begin{eqnarray*}
\phi_k \colon \Delta_k \times P_gX & \to & M^{k+1}=\Pp_gM({\gr n}) \\
(t_1, \dots, t_k) \times \gamma & \mapsto & (\gamma(0),
\gamma(t_1), \dots, \gamma(t_k)). \nonumber
\end{eqnarray*}
and consider the commutative diagram
$$\xymatrix{ \Delta_k \times P_gM \ar[d]^{e}\ar[r]^>>>>>{\phi_k} & M^{k+1}\ar[d]^{\pi_1}\\
M \ar[r]^= &M
}$$
where $e((t_0, t_1, \dots, t_k), \gamma) \mapsto \gamma(0)$ and the right hand vertical
 map is the projection on the first coordinate.
Pulling back the virtual bundle $-TM$ under $e$ and $\pi_1$ we get a map of Thom spectra,
 that by abuse of notation we still call $\phi_k$
\begin{eqnarray*}
\phi_k : (\Delta_k)_+ \wedge P_gM^{-TM} \To M^{-TM} \wedge (M^k)_+.
\end{eqnarray*}

Taking adjoints, we get a map of spectra
\begin{eqnarray*}
\phi : P_gM^{-TM} \To \prod_k \mbox{Map}((\Delta_k)_+, M^{-TM} \wedge (M^k)_+)
\end{eqnarray*}
that is just the induced map of Thom spectra of the map $$\phi: P_gM \to \prod_{k \geq 0}
\mbox{Map}(\Delta_k, M^{k+1})$$ described in Lemma \ref{lemma total map of spectra}.

We are now ready to define the cosimplicial spectrum $\Pf_gM$; it will be the cosimplicial Thom
 spectrum of the cosimplicial
virtual bundle $-TM$ on $\Pp_gM$. More explicitly, let $\Pf_gM$ be the cosimplicial spectrum
 whose $k$-simplices are the spectrum
$$\Pf_gM_k := M^{-TM} \wedge (M^k)_+$$
and whose coface and codegeneracy maps are:
\begin{align*}
\delta_0(u;x_1, \dots, x_{k-1}) &= (u;y,x_1, \dots, x_{k-1})\\
\delta_i(u;x_1, \dots, x_{k-1}) &=  (u;x_1, \dots x_{i-1}, x_i, x_i, x_{i+1}, \dots , x_{k-1}),  \ \ \ 1 \leq i \leq k-1\\
\delta_k(u;x_1, \dots, x_{k-1}) &= (v; x_1, \dots , x_{k-1}, z)\\
\sigma_i(u;x_1, \dots, x_{k+1}) &= (u; x_1, \dots, x_i, x_{i+2}, \dots, x_{k+1}) \ \ \ \ 0 \leq i \leq k
\end{align*}
with $\mu(u)=(u,y)$ and $\nu(u) = (z,v)$ where
$$\mu : M^{-TM} \to M^{-TM} \wedge M_+ \ \ \ \ \ \ \ \  \nu: M^{-TM} \to M_+ \wedge M^{-TM} $$
are the maps of Thom spectra induced over the diagonal maps $\Delta: M \to M\times M$ and
$\Delta_g: M \to M\times M$  by the maps of virtual bundles $\Delta_*: -TM \to -\pi_1^*TM$
 and $\Delta_{g*} : -TM \to -\pi_2^*TM$.

Let $|\Pf_gM|$ be the total spectrum of the cosimplicial spectrum $\Pf_gM$ i.e. it consists of sequences of maps
$\{\gamma_k\}$ in
$$\prod_k \mbox{Map}((\Delta_k)_+ ; M^{-TM} \wedge (M^k)_+)$$
which commute with the coface and codegeneracy maps. Therefore we can conclude that
by applying the Thom spectrum functor for the virtual bundle of Lemma \ref{lemma total map of spectra} we get
\begin{proposition} \label{proposition equivalence phi}
The map $$ \phi : P_gM^{-TM} \To \prod_k \mbox{Map}((\Delta_k)_+, M^{-TM} \wedge (M^k)_+)$$
induces a homeomorphism  between the spectrum $P_gM^{-TM}$ and the total spectrum of $\Pf_gM$
$$\phi : P_gM^{-TM} \stackrel{\cong}{\To} |\Pf_gM|.$$
\end{proposition}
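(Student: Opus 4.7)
The statement is the Thomified analog of Lemma~\ref{lemma total map of spectra}, so the plan is to upgrade the space-level homeomorphism $\phi\colon P_gM \stackrel{\cong}{\to} |\Pp_gM|$ by applying an appropriate Thom spectrum construction fiberwise and verifying that the resulting cosimplicial spectrum is exactly $\Pf_gM$.

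First, I would set up the commutative square (already displayed in the excerpt just before the statement) showing that $\phi_k\colon \Delta_k\times P_gM \to M^{k+1}$ intertwines $e_0 = e$ on the left with $\pi_1$ on the right. Applying Thom spectra to the pullback virtual bundles then yields, as in the excerpt, the spectrum-level map
\[
\phi_k \colon (\Delta_k)_+ \wedge P_gM^{-TM} \To M^{-TM}\wedge (M^k)_+.
\]

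Second, I would check that the cosimplicial structure defined on $\Pf_gM$ is obtained by applying the Thom spectrum functor with respect to $-\pi_1^*TM$ to the cosimplicial structure on $\Pp_gM$ given in (\ref{cofaces P_gX}). For interior cofaces $\delta_i$ with $1\le i \le k-1$ and for all codegeneracies the first coordinate is preserved, so the induced map on Thom spectra is just smashing with a diagonal or projection on the $M$-factors, exactly as in the formulas defining $\Pf_gM$. For $\delta_0$, the underlying map $x_0 \mapsto (x_0,x_0)$ changes the reference point of the virtual bundle from $\pi_1$ to $\pi_1$ after duplication, and the Thom-ified diagonal $\Delta_*\colon -TM \to -\pi_1^*TM$ produces precisely the map $\mu\colon M^{-TM}\to M^{-TM}\wedge M_+$. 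For $\delta_k$, the underlying map $(x_0,\dots,x_k)\mapsto (x_0,\dots,x_k,x_0g)$ moves the first coordinate to the last slot twisted by $g$, and the Thom-ification of $\Delta_{g*}\colon -TM \to -\pi_2^*TM$ produces the map $\nu\colon M^{-TM}\to M_+\wedge M^{-TM}$, matching the definition of $\delta_k$ in $\Pf_gM$.

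Third, with the identification of $\Pf_gM$ as the Thom spectrum (with respect to the cosimplicial virtual bundle $-\pi_1^*TM$) of $\Pp_gM$ in hand, the adjoint of $\phi_k$ assembles into a map $\phi\colon P_gM^{-TM}\to |\Pf_gM|$ whose underlying space-level map is the homeomorphism of Lemma~\ref{lemma total map of spectra}. Since $P_gM^{-TM}$ is obtained from $P_gM$ by Thom-ifying the pullback of $-TM$ along $e_0$, and this bundle-data is compatible level-wise with the Thom construction defining $|\Pf_gM|$, the induced map of total spectra is a homeomorphism.

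The main obstacle is the middle step: pinning down that the twisted diagonal $\Delta_g$ is exactly what Thom-ifies the coface $\delta_{k}$ encoding the holonomy condition $x_0g = x_{k+1}$, and that the resulting formulas agree on the nose with those listed for $\Pf_gM$ (including signs and positions of the $y$ and $z$ coordinates). Once that bookkeeping is carried out, the conclusion is a formal consequence of Lemma~\ref{lemma total map of spectra} and the fact that taking Thom spectra commutes with forming totalizations of cosimplicial diagrams in the relevant setting.
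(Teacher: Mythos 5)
Your plan follows exactly the route the paper takes: the paper states the proposition as an immediate consequence of applying the Thom spectrum functor (for the virtual bundle pulled back along the first coordinate) to the space-level homeomorphism of Lemma~\ref{lemma total map of spectra}, which is precisely your strategy. Your write-up is in fact more detailed than the paper's one-line justification, correctly isolating the only real bookkeeping point (that the cofaces $\delta_0$ and $\delta_k$ of $\Pf_gM$ are the Thomifications of the diagonal and the $g$-twisted diagonal via $\Delta_*$ and $\Delta_{g*}$).
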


As a consequence of Lemma \ref{lemma total-cochains = cochains-total} after passing to
Thom spectra, we get that the maps $\phi_k :(\Delta_k)_+ \wedge P_gM^{-TM} \to M^{-TM} \wedge (M^k)_+$
 define maps of cochains
$$C^*(M^{-TM} \wedge (M^k)_+)[k] \to C^*(P_gM^{-TM})$$
which assemble to give a map from the total complex of the simplicial cochain
 complex $|C^*(\Pf_gM)|$ to the cochain complex of the total spectrum $|\Pf_gM|$; therefore we have
\begin{lemma} \label{lemma total-cochains = cochains-total of spectrum}
There is a homomorphism of graded complexes
$$|C^*(\Pf_gM) | \To C^*(P_gM^{-TM})$$
that when $M$ is a connected and simply connected manifold it becomes a quasi-isomorphism.
\end{lemma}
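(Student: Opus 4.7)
The plan is to mimic, in the stable setting, the proof given for Lemma \ref{lemma total-cochains = cochains-total}. The main inputs are the maps $\phi_k:(\Delta_k)_+\wedge P_gM^{-TM}\to M^{-TM}\wedge (M^k)_+$ introduced just before the statement, together with the fact (Proposition \ref{proposition equivalence phi}) that $\phi$ realises $P_gM^{-TM}$ as the total spectrum $|\Pf_gM|$, which is precisely what allows the argument from the unstable case to be transported here.

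First I would construct the desired map. Passing to singular cochains, each $\phi_k$ induces a pullback $\phi_k^*:C^{*}(M^{-TM}\wedge(M^k)_+)\to C^{*}((\Delta_k)_+\wedge P_gM^{-TM})$. Composing with the Eilenberg--Zilber map into $C^{*}(\Delta_k)\otimes C^{*}(P_gM^{-TM})$ followed by integration $\int_{\Delta_k}$ against the fundamental class $[\Delta_k]$ produces homomorphisms
\[
C^{n+r}(M^{-TM}\wedge (M^k)_+)\;\longrightarrow\; C^{r}(P_gM^{-TM}).
\]
Using the Stokes-type identity $(-1)^k d\int_{\Delta_k}\alpha=\int_{\Delta_k}d\alpha-\int_{\partial\Delta_k}\alpha$ exactly as in Lemma \ref{lemma total-cochains = cochains-total}, these assemble into a degree-zero chain map
\[
F:|C^{*}(\Pf_gM)|\;\longrightarrow\; C^{*}(P_gM^{-TM})
\]
from the total complex of the cosimplicial cochain complex $C^{*}(\Pf_gM)$ (see section 5 of \citep{BottSegal}) to the cochain complex of the total spectrum, which by Proposition \ref{proposition equivalence phi} coincides with $C^*(P_gM^{-TM})$.

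Second, I would establish that $F$ is a quasi-isomorphism. For this one invokes the convergence criterion of Proposition 5.3 and Lemma 7.1 of \citep{BottSegal}, which asserts that the natural map from the total complex of a cosimplicial (co)chain complex to the cochains on its total space is a quasi-isomorphism provided the connectivity of the target space exceeds the connectivity of the indexing simplicial set. Here the simplicial set whose realisation parametrises $\Pf_gM$ is the circle (connectivity $0$), while $M$ is assumed to be connected and simply connected (hence $C^{*}(M)$, and so the Thom spectrum $M^{-TM}$ shifted appropriately, has the required connectivity). The same argument applies verbatim to the Thom spectrum version, since smashing with $M^{-TM}$ and with $(M^k)_+$ does not affect the hypotheses on connectivity that Bott--Segal use in their spectral sequence comparison.

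The main obstacle, as in the unstable case, is convergence of the spectral sequence associated to the cosimplicial filtration of $|C^{*}(\Pf_gM)|$; it is precisely the simply connectedness of $M$ (mirroring the usual Eilenberg--Moore hypothesis and the reason the argument fails for non-simply connected spaces) that guarantees the required vanishing. Once this is granted, the comparison is formal and one obtains $F$ as the required quasi-isomorphism; in the sequel it can then be dualised and combined with Theorem \ref{theorem Tor=H^*(loops)} to relate $C_{*}(P_gM^{-TM})$ with $\HHom_{{C^*}^e}(B({C^*}),{C^*_g})$.
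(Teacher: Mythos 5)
Your proposal is correct and follows essentially the same route as the paper: the paper obtains this lemma by transporting the construction of Lemma \ref{lemma total-cochains = cochains-total} (pullback along the maps $\phi_k$, Eilenberg--Zilber, integration over $\Delta_k$, assembly via the Stokes identity) to the Thom-spectrum level and invoking the Bott--Segal convergence criterion under the simple connectivity hypothesis. If anything, your write-up is slightly more explicit than the paper's, which states the spectrum-level argument only as a consequence of the unstable case.
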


If we denote
$$|C^*(\Pf_gM) |^\vee:= \HHom_\integer(|C^*(\Pf_gM) |, \integer)$$ then let us show that
\begin{lemma} \label{lemma total cochain= Hom(BA,A)}
There is a map of graded complexes
$$|C^*(\Pf_gM) |^\vee \to \HHom_{{C^*}^e}(B({C^*}),{C^*_g})$$
that moreover is a quasi-isomorphism.
\end{lemma}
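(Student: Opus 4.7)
The plan is to define the map levelwise using Atiyah duality (Theorem \ref{mio}) to trade chains on the Thom spectrum $M^{-TM}$ for cochains on $M$ in the twisted bimodule ${C^*_g}$, and then to compare both sides via their natural filtrations. First I unpack the two complexes. On the left, the total complex $|C^*(\Pf_gM)|$ has $k$-th piece $C^*(M^{-TM} \wedge (M^k)_+)[k]$; dualizing and applying the Eilenberg--Zilber theorem identifies this piece up to quasi-isomorphism with $C_{-*}(M^{-TM}) \otimes C_*(M)^{\otimes k}$. On the right, a ${C^*}^e$-linear map out of ${C^*}^{k+2}[k]$ is determined by its values on $1 \otimes {C^*}^{\otimes k} \otimes 1$, so
$$\HHom_{{C^*}^e}(B({C^*}),{C^*_g}) \;=\; \prod_k \HHom_{\integer}\bigl({C^*}^{\otimes k}[k],\,{C^*_g}\bigr).$$
Given $\alpha \in C_*(M^{-TM} \wedge (M^k)_+)$, capping with a cochain on the $M^k$ factor defines a $\integer$-linear map $C^*(M^k) \to C_{-*}(M^{-TM})$; precomposing with the Eilenberg--Zilber shuffle ${C^*}^{\otimes k} \to C^*(M^k)$ and postcomposing with the chain equivalence $C_{-*}(M^{-TM}) \to {C^*_g}$ of Theorem \ref{mio} yields the desired element of $\HHom_{\integer}({C^*}^{\otimes k}[k],{C^*_g})$. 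Assembled over all $k$, this gives the claimed map of graded complexes.

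Next I would verify that this is a chain map. The bar differential \eqref{induced differential} is a sum of internal multiplications $a_j a_{j+1}$ together with a final $g$-twisted term $(-1)^n(g(a_n)a_0|\cdots|a_{n-1})$. The inner cofaces $\delta_i$ of $\Pf_gM$ for $1 \le i \le k-1$ are diagonal maps on the $M^k$ factor and dualize under Eilenberg--Zilber to precisely the internal multiplications. The boundary faces $\delta_0$ and $\delta_k$, built respectively from the untwisted diagonal $\Delta$ and the twisted diagonal $\Delta_g$ acting on the Thom-spectrum factor (see \eqref{cofaces P_gX} and the definition of $\Pf_gM$), dualize under Theorem \ref{mio} to the left ${C^*}$-action and the $g$-twisted right ${C^*}$-action on ${C^*_g}$, matching the remaining terms of the bar differential. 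The codegeneracies dualize to unit insertions, matching the normalization of $B({C^*})$.

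Finally, the quasi-isomorphism follows from a filtration argument. Both complexes carry an increasing filtration by cosimplicial (respectively bar) degree $k$, and the map constructed above preserves these filtrations. On the $k$-th associated graded piece, after the Eilenberg--Zilber and Atiyah-duality quasi-isomorphisms, the map reduces to the natural pairing
$$C_*(M)^{\otimes k} \otimes {C^*_g} \;\longrightarrow\; \HHom_{\integer}\bigl({C^*}^{\otimes k},\,{C^*_g}\bigr),$$
which is a quasi-isomorphism because the compact manifold $M$ has the homotopy type of a finite CW-complex, so $C_*(M)$ is quasi-isomorphic to a finitely generated free $\integer$-complex. A standard spectral-sequence comparison, together with the boundedness of the filtration in each total degree, upgrades levelwise quasi-isomorphism to a quasi-isomorphism of total complexes. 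The main obstacle is the compatibility bookkeeping for the last coface: one must check that Theorem \ref{mio} exchanges the Thom-spectrum dual of $\Delta_g$ with the $g$-twisted bimodule structure on ${C^*_g}$, after which the match with the final term $(-1)^n(g(a_n)a_0|\cdots|a_{n-1})$ of the bar differential is forced.
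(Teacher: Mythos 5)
Your construction of the map itself is essentially the paper's: both identify the $k$-th dual piece $C^*(M^{-TM}\wedge (M^k)_+)[k]^\vee$ with $\HHom_\integer({C^*}[1]^{\otimes k}, C_{-*}(M^{-TM}))$ via Eilenberg--Zilber and the slant product, and both invoke Theorem \ref{mio} to replace $C_{-*}(M^{-TM})$ by ${C^*_g}$ compatibly with the twisted bimodule structure coming from $\Delta_g$. That part is fine (modulo the fact that Theorem \ref{mio} really produces a zig-zag through $F(e)$ and $\nu(e)$, so the ``map'' is a composite of chain homotopy equivalences and homotopy inverses; the paper handles this by passing to the model with $k$-cosimplices $M^{-\tau}(e)\wedge(\nu(e)_+)^k$).

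The genuine gap is in your final step. You filter both sides by cosimplicial/bar degree and claim the filtration is ``bounded in each total degree,'' so that a levelwise quasi-isomorphism upgrades to one of total complexes. This is false: both total complexes are infinite \emph{products} over $k$ (the dual of a direct sum, and $\HHom_{{C^*}^e}(B({C^*}),{C^*_g})=\prod_k\HHom_\integer({C^*}^{\otimes k}[k],{C^*_g})$), and since $C^*(M)$ is nonzero in all degrees $\ge 0$, each total degree receives contributions from infinitely many filtration levels. Convergence of the comparison spectral sequence is therefore a real issue --- the same issue that makes the Eilenberg--Moore spectral sequence fail in general, and the one the paper spends Appendix C resolving (there one needs the filtration to be Hausdorff and weakly convergent \emph{plus} finite generation of the cohomology in each degree to pass from an isomorphism of completions to an isomorphism of cohomologies). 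Your argument is repairable along those lines, but as stated the key justification is wrong. The paper sidesteps this entirely: Theorem \ref{mio} supplies \emph{chain homotopy equivalences} (not mere quasi-isomorphisms), and chain homotopy equivalences are preserved by $\HHom_\integer(-,-)$ and by arbitrary products, so the paper simply strings them together without any spectral sequence. If you want to keep your filtration argument, you must either replace your levelwise quasi-isomorphisms by chain homotopy equivalences (using finiteness of $M$ and Lemma \ref{homotopy lemma}-type arguments) or supply the convergence discussion of Appendix C.
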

\begin{proof}

The cochains of the $k$ simplices of $\Pf_gM$ become chain homotopy equivalent to
\begin{align*}
C^*(M^{-TM} \wedge (M^k)_+) \cong C^*(M)^k \otimes C^*(M^{-TM})
\end{align*}
and after dualizing we get that
\begin{align*}
C^*(M^{-TM} \wedge (M^k)_+)[k]^\vee \cong &\HHom_\integer(C^*(M)[1]^{\otimes k} \otimes C^*(M^{-TM}), \integer)\\
\cong & \HHom_\integer (C^*(M)[1]^{\otimes k}, C_{-*}(M^{-TM}))
\end{align*}

Now we want to use the fact that Atiyah duality  produces an equivalence of symmetric spectra,
that induces a chain homotopy equivalence
$$\alpha_* : C_{-*}(M^{-TM}) \to C^*(M)$$
compatible with the $C^*(M)^{e}$-module structure of $C_{-*}(M^{-\tau}(e))$, this is the
content of Theorem \ref{mio}. For this, let us modify (by a homotopy equivalent model) the
 cosimplicial spectrum $\Pf_gM$. Its $k$-cosimplices are
$$
M^{-\tau}(e) \wedge (\nu(e)_+)^k
$$
with the corresponding codegeneracy and coface maps defined using $\mu$ and $\nu_g$.
As before, the chains of the $k$-cosimplices are given by
\begin{align*}
C^*(M^{-\tau}(e) \wedge (\nu(e)^k)_+)[k]^\vee
\cong  &    Hom(C^*(\nu(e))[1]^{\otimes k},C_{-*}(M^{-\tau}(e))_g) \\
\cong & \HHom_{{C^*}^e}({C^*} \otimes {C^*}[1]^{\otimes k} \otimes {C^*}, C_{-*}(M^{-\tau}(e))_g) \\
\cong & CH^*(C^*(\nu(e)),C_{-*}(M^{-\tau}(e))_g)[k]
\end{align*}

Using the first part of Theorem \ref{mio} we can replace $C^*(\nu(e))$ by $C_{-*}(F(e))$
 to obtain a $C_G(g)$-equivariant chain equivalence
$$
|C^*(\Pf_gM) |^\vee \rightarrow CH^*(C_{-*}(F(e)),C_{-*}(M^{-\tau}(e))_g).
$$

Now, by the second  part of Theorem \ref{mio}, we have a $C_G(g)$-equivariant chain
homotopy equivalence
$$
CH^*(C_{-*}(F(e)),C_{-*}(M^{-\tau}(e))_g) \rightarrow  CH^*(C_{-*}(M^{-\tau}(e)),C_{-*}(M^{-\tau}(e))_g)
$$
which all together give a $C_G(g)$-equivariant chain homotopy equivalence
\begin{equation} \label{ec}
|C^*(\Pf_gM) |^\vee \rightarrow CH^*(C_{-*}(M^{-\tau}(e)),C_{-*}(M^{-\tau}(e))_g).
\end{equation}

Since we have a $\pi_*$-equivalence of symmetric ring spectra over pointed $G$-spaces
$\alpha : M^{-\tau}(e) \rightarrow F(e)$ that is compatible with the twisted bimodule
 structure on $M^{-\tau}(e)_g$ and $F(e)_g$ (both bimodule structures come as duals to
  the twisted diagonals), then it induces an equivalence of their topological Hochschild cohomologies
$$
THH^*(M^{-\tau}(e),M^{-\tau}(e)_g) \cong THH^*(F(e),F(e)_g).
$$
By applying the chain functor we obtain $C_G(g)$-equivariant chain homotopies
\begin{equation} \label{ec2}
CH^*(C_{-*}(M^{-\tau}(e)),C_{-*}(M^{-\tau}(e))_g) \cong CH^*(C^*(M),C^*(M)_g)
\end{equation}
Finally putting together the isomorphisms \ref{ec} and \ref{ec2} we obtain the desired
result, a $C_G(g)$-equivariant chain homotopy equivalence
$$
|C^*(\Pf_gM) |^\vee \rightarrow CH^*(C^*(M),C^*(M)_g) = \HHom_{{C^*}^e}(B({C^*}),{C^*_g})
$$
\end{proof}

Lemma \ref{lemma total-cochains = cochains-total of spectrum} together with Lemma \ref{lemma total cochain= Hom(BA,A)}
allow us to conclude that
\begin{theorem}
For $M$ a connected and simply connected compact manifold, there exists a homomorphism of graded complexes
$$ C_* (P_gM^{-TM} )\stackrel{\simeq}{\To} \HHom_{{C^*}^e}(B({C^*}),{C^*_g})$$
that moreover is a quasi-isomorphism, and therefore induces an
isomorphism of graded groups
$$ H_* (P_gM^{-TM} )\stackrel{\cong}{\To} H^* \HHom_{{C^*}^e}(B({C^*}),{C^*_g}) .$$
\end{theorem}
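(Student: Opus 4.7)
The plan is to combine the two preceding lemmas by dualizing the first one and composing with the second. More precisely, Lemma~\ref{lemma total-cochains = cochains-total of spectrum} gives a quasi-isomorphism
$$|C^*(\Pf_gM)| \stackrel{\simeq}{\To} C^*(P_gM^{-TM})$$
whenever $M$ is connected and simply connected, and Lemma~\ref{lemma total cochain= Hom(BA,A)} gives a quasi-isomorphism
$$|C^*(\Pf_gM)|^\vee \stackrel{\simeq}{\To} \HHom_{{C^*}^e}(B({C^*}),{C^*_g}).$$
If I dualize the first map, I obtain a map $C_*(P_gM^{-TM}) \to |C^*(\Pf_gM)|^{\vee}$ which I can then compose with the map from Lemma~\ref{lemma total cochain= Hom(BA,A)} to produce the desired arrow.

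The main technical obstacle is to argue that the dual map is still a quasi-isomorphism, since in general the functor $\HHom_\integer(-,\integer)$ need not preserve quasi-isomorphisms. To handle this, I would invoke the finite type hypothesis: because $M$ is a compact, connected and simply connected manifold, the Thom spectrum $M^{-TM}$ has finitely generated homology in each degree, and by the Serre spectral sequence applied to the path-loop fibration (or the Eilenberg--Moore type arguments of Section~\ref{section loops quotient}) the same holds for $P_gM^{-TM}$. Thus the chain complex $C_*(P_gM^{-TM})$ is quasi-isomorphic to a complex of finite type, and under this assumption the universal coefficient theorem implies that $\HHom_\integer(-,\integer)$ preserves quasi-isomorphisms between the relevant complexes. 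The same finite type remark justifies that $|C^*(\Pf_gM)|^\vee$ has the expected cohomology, namely $C_*(P_gM^{-TM})$.

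The remaining step is to verify that the resulting composition is natural in the ingredients, so that it will be compatible (in the next section) with the multiplicative structure induced by $\mu_{g,h}$ on the left and by the dg-ring structure on $\HHom_{{C^*}^e}(B({C^*}),{C^*}\#G)$ on the right. This naturality is essentially automatic from the cosimplicial construction of $\Pf_gM$ and the explicit form of the Eilenberg--Zilber/Atiyah duality maps used in Lemma~\ref{lemma total cochain= Hom(BA,A)}, but it is worth keeping in mind as I will need it in Section~\ref{section multiplicative structures}. The isomorphism on homology then follows immediately by passing to cohomology of the chain-level quasi-isomorphism.
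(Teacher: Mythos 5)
Your proposal is correct and follows essentially the same route as the paper, which deduces the theorem precisely by combining Lemma \ref{lemma total-cochains = cochains-total of spectrum} (dualized) with Lemma \ref{lemma total cochain= Hom(BA,A)}. Your extra remarks on finite type justifying that dualization preserves the quasi-isomorphism make explicit a point the paper leaves implicit (it is flagged earlier, in the discussion of chains on loops of $[X/G]$), but this is a fleshing-out of the same argument rather than a different one.
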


From the previous theorem, assembling the maps for all $g$ in $G$, we can deduce that
\begin{cor} \label{cor q.i. C(P_GM) to Hom(BA,AG)}
For $M$ a connected and simply connected compact manifold, there exists a homomorphism of graded complexes
$$ C_* (P_GM^{-TM} )\stackrel{\simeq}{\To} \HHom_{{C^*}^e}(B({C^*}),{C^*}\#G)$$
that moreover is a quasi-isomorphism and $G$-equivariant, and therefore induces a $G$-equivariant
isomorphism of graded groups
$$ H_* (P_GM^{-TM} )\stackrel{\cong}{\To} H^*  \HHom_{{C^*}^e}(B({C^*}),{C^*}\#G) .$$
\end{cor}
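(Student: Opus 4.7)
The plan is to obtain the corollary by assembling the theorem just proved (which established the quasi-isomorphism for each individual $g$) over the entire group $G$, and then verifying that the direct-sum map intertwines the two $G$-actions. First I would use the obvious decomposition
\[
C_*(P_GM^{-TM}) \;=\; \bigoplus_{g\in G} C_*(P_gM^{-TM}), \qquad
\HHom_{{C^*}^e}(B({C^*}),{C^*}\# G) \;\cong\; \bigoplus_{g\in G} \HHom_{{C^*}^e}(B({C^*}),{C^*_g}),
\]
where the second isomorphism follows from the splitting ${C^*}\#G = \bigoplus_{g} {C^*_g}$ of equation (\ref{definition A_g}) and the fact that $\HHom_{{C^*}^e}(B({C^*}),-)$ commutes with direct sums in the (bounded-below) target. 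Taking the direct sum of the quasi-isomorphisms $C_*(P_gM^{-TM}) \stackrel{\simeq}{\to}\HHom_{{C^*}^e}(B({C^*}),{C^*_g})$ produced in the preceding theorem gives a chain map of the required form, which is a quasi-isomorphism because it is so summand by summand.

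Next I would check $G$-equivariance. Recall that for $k\in G$ the action on $P_GM$ sends $P_gM$ to $P_{k^{-1}gk}M$ by $f\mapsto fk$, and at the level of Thom spectra this is covered by the identification $e_0^*TM\cong k^*e_0^*TM$ coming from the $G$-equivariance of the embedding $M\hookrightarrow V$. On the other side, the $G$-action on $\HHom_{{C^*}^e}(B({C^*}),{C^*}\#G)$ recalled in (\ref{G-module st Hom(BA, AG)}) carries $\HHom_{{C^*}^e}(B({C^*}),{C^*_g})$ into $\HHom_{{C^*}^e}(B({C^*}),{C^*_{k^{-1}gk}})$, since $k\cdot(x\otimes g) = k(x)\otimes kgk^{-1}$ in ${C^*}\#G$. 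So the two actions permute the summands in a compatible way.

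What remains is the main technical point: that the individual quasi-isomorphism of the preceding theorem is natural under conjugation. To see this I would trace through the construction: the cosimplicial spectrum $\Pf_gM$ has $k$-cosimplices $M^{-\tau}(e)\wedge (\nu(e)_+)^k$, whose structure maps are built from $\mu$ and from the $g$-twisted diagonal $\nu_g$; both are $G$-equivariant once we use the $G$-equivariant embedding $e:M\hookrightarrow V$, and the $G$-action identifies $\Pf_gM$ with $\Pf_{k^{-1}gk}M$. The chain equivalences of Theorem \ref{mio} were already established to be $C_G(g)$-equivariant, but the same construction is natural under the full $G$-action on the family $\{\Aa_g\}_{g\in G}$ because conjugation by $k$ just relabels the twisted diagonal $\Delta_g$ as $\Delta_{k^{-1}gk}$. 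Consequently the map of Lemma \ref{lemma total cochain= Hom(BA,A)} is $G$-equivariant when restricted on both sides to the corresponding summands, and so is the comparison with singular chains in Lemma \ref{lemma total-cochains = cochains-total of spectrum}.

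The main obstacle is precisely this last equivariance check: the individual quasi-isomorphism is built from a zig-zag involving the Atiyah-duality equivalence $\alpha:M^{-\tau}(e)\to F(e)$, the evaluation map $ev_*$, and the Eilenberg–Zilber shuffle. Each of these is equivariant for the full $G$-action (not only for $C_G(g)$), but verifying this cleanly requires that all the chain-level choices in Theorem \ref{mio} and in the proof of the preceding theorem can be made uniformly in $g$ so that conjugation by $k$ permutes the diagrams strictly rather than merely up to homotopy. Once that uniformity is ensured, the corollary follows formally, and passing to cohomology yields the claimed $G$-equivariant isomorphism on $H_*(P_GM^{-TM})$.
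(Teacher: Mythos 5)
Your proposal follows essentially the same route as the paper: assemble the per-$g$ quasi-isomorphisms of the preceding theorem over the direct-sum decompositions of both sides, and reduce the corollary to checking that the $G$-actions permute the summands compatibly. The paper's own proof is terser — it only records that the topological maps $P_gM^{-TM}\to|\Pf_gM|$ assemble equivariantly because the evaluation maps $e_t\colon P_GM\to M$ are $G$-equivariant — whereas you additionally trace equivariance through the chain-level zig-zag of Theorem \ref{mio}, which is a legitimate refinement of the same argument rather than a different one.
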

\begin{proof}
The only thing left to prove is that the maps $P_gM^{-TM} \to |\Pf_gM|$ induce an equivariant map
$$P_GM^{-TM} \to \bigsqcup_{g\in G} |\Pf_gM|,$$
but this follows from the fact that the evaluation maps $e_t: P_GM \to M$ are $G$ equivariant.
\end{proof}

\subsection{Multiplicative structures} \label{section multiplicative structures}

In this section we will show how the map of Corollary \ref{cor q.i. C(P_GM) to Hom(BA,AG)}
is compatible with the multiplicative structure of $C_*(P_GM^{-TM})$ coming from the map
 $\mu$ of (\ref{map mu}), and the natural ring structure on $\HHom_{{C^*}^e}(B({C^*}),{C^*}\#G)$
  that was explained in section \ref{subsubsection B(G)}. To achieve this, we will endow the
  cosimplicial spectra $\Pf_gM$ with multiplicative maps $$\tilde{\mu}_{g,h} : \Pf_gM \wedge \Pf_hM \to \Pf_{gh}M$$
that once realized will be compatible with the maps $\mu_{g,h}$ and that after passing to
 chains, will realize the ring structure of $\HHom_{{C^*}^e}(B({C^*}),{C^*}\#G)$.

Let us define the maps
\begin{align*}
\tilde{\mu}_{g,h}^{k,l} : \left( M^{-TM} \wedge (M^k)_+ \right) \wedge \left( M^{-TM} \wedge (M^l)_+ \right) & \To  M^{-TM} \wedge (M^{k+l})_+ \\
(u; x_1, \dots, x_k) \wedge (v; y_1, \dots, y_l)  \mapsto (\Delta_g^*(u,v)&; x_1, \dots, x_k,y_1, \dots, y_l)
\end{align*}
that define maps of the simplices
$$\tilde{\mu}_{g,h}^{k,l}  : (\Pf_gM)_k \wedge (\Pf_hM)_l \to (\Pf_{gh}M)_{k+l}$$
which commute with the coface and codegeneray operators, and therefore induce maps of cosimplicial spectra
$$\tilde{\mu}_{g,h} : \Pf_gM \wedge \Pf_hM \to \Pf_{gh}M.$$
Taking the total spectrum we get maps of spectra
$$|\tilde{\mu}_{g,h}| : |\Pf_gM| \wedge |\Pf_hM| \to |\Pf_{gh}M|$$
that induce pairings that are  $A_\infty$-associative.

Applying the chains functor to the map $\tilde{\mu}_{g,h}^{k,l}$ we get the map
\begin{align*}
(\tilde{\mu}_{g,h}^{k,l})_* : (C_{-*}(M)^k \otimes C_{-*}(M^{-TM})) \otimes (C_{-*}(M)^l \otimes C_{-*}(M^{-TM})) &  \to\\ C_{-*}(M)^{k+l}& \otimes C_{-*}(M^{-TM})
\end{align*}
that by Theorem \ref{mio} induces the map
\begin{align*}
(\tilde{\mu}_{g,h}^{k,l})_* : (C_{-*}(M)^k \otimes C^*(M)) \otimes (C_{-*}(M & )^l \otimes C^*(M))  \to C_{-*}(M)^{k+l} \otimes C^*(M)\\
(a_1 \otimes \cdots \otimes a_k \otimes \alpha) \otimes (b_1 \otimes \cdots b_l \otimes \beta)&  \mapsto \\ a_1 \otimes & \cdots \otimes  a_k \otimes  b_1 \otimes \cdots \otimes b_l \otimes \Delta_g^*(\alpha \otimes \beta) \\
 = a_1 \otimes & \cdots \otimes  a_k \otimes  b_1 \otimes \cdots \otimes b_l \otimes \alpha \cdot g(\beta).
\end{align*}

Composing with the natural isomorphisms
$$C_{-*}(M)^k \otimes C^*(M) \cong \HHom_{C^*(M)^e}(C^*(M)^{k+2}, C^*(M))$$
we see that the map $(\tilde{\mu}_{g,h}^{k,l})_*$ induces the multiplicative structure that for
$$\phi_g \in \HHom_{C^*(M)^e}(C^*(M)^{k+2}, C^*(M)), \ \ \psi_h \in \HHom_{C^*(M)^e}(C^*(M)^{l+2}, C^*(M))$$
defines the function$$(a_0| \dots| a_{k+l+1}) \mapsto (-1)^{|\psi_h|\varepsilon_k} \phi_g(a_0| \dots |a_k|1) g(\psi_h(1|a_{k+1}| \dots |a_{k+l+1})$$
which agrees with the definition of the product structure of $\HHom_{{C^*}^e}(B({C^*}),{C^*} \# G)$ done in section \ref{subsubsection B(G)}.

We have then
\begin{lemma} \label{lemma multiplicative 1}
The maps of Lemma \ref{lemma total cochain= Hom(BA,A)}
$$|C^*(\Pf_gM) |^\vee \to \HHom_{{C^*}^e}(B({C^*}),{C^*_g})$$
 are compatible with the multiplicative structures on $\bigoplus_{g \in G} |C^*(\Pf_gM) |^\vee$ and  $\HHom_{{C^*}^e}(B({C^*}),{C^*} \#G)$, which on the first term, the multiplicative structure is  induced by the maps $(\tilde{\mu}_{g,h}^{k,l})_*$. In particular we have
 an isomorphism of rings
 $$\bigoplus_{g \in G} H^*(|C^*(\Pf_gM) |^\vee) \stackrel{\cong}{\To}  H^* \HHom_{{C^*}^e}(B({C^*}),{C^*}\#G).$$
 \end{lemma}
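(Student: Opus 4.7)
The plan is to trace both ring structures through the chain of quasi-isomorphisms of Lemma \ref{lemma total cochain= Hom(BA,A)} and verify that every step is multiplicative.

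First, I will check that the maps $\tilde{\mu}_{g,h}^{k,l}$ satisfy the required cosimplicial identities, so that they assemble into maps of cosimplicial spectra $\tilde{\mu}_{g,h}\colon \Pf_gM \wedge \Pf_hM \to \Pf_{gh}M$. Passing to total spectra and applying $C_*$ simplex-by-simplex then yields an $A_\infty$-associative pairing on $\bigoplus_g |C^*(\Pf_gM)|^\vee$; its zero-th associator is precisely the product that needs to be compared.

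Next, I will chase the simplex-wise formula for $(\tilde{\mu}_{g,h}^{k,l})_*$ through the equivalences of Lemma \ref{lemma total cochain= Hom(BA,A)}. Using Theorem \ref{mio} to identify $C_{-*}(M^{-TM})$ with $C^*(M)$ equipped with the twisted $C^*(M)^e$-bimodule structure $C^*(M)_g$, together with the standard adjunction
\[
C_{-*}(M)^{\otimes k}\otimes C^*(M) \cong \HHom_{C^*(M)^e}(C^*(M)^{k+2}, C^*(M)),
\]
the induced product on $\HHom_{{C^*}^e}(B({C^*}),{C^*_g})$ will come out exactly to the formula displayed in the paragraph preceding the lemma, which agrees verbatim with the formula defining the product on $\HHom_{{C^*}^e}(B({C^*}),{C^*}\#G)$ of section \ref{subsubsection B(G)}. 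This identifies the two ring structures at the level of complexes, and the ring isomorphism on $H^*$ follows by passing to cohomology.

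The main obstacle will be verifying that Theorem \ref{mio} transports the twisted diagonal $\Delta_g^*$ at the Thom-spectrum level to the twisted multiplication $\alpha\cdot g(\beta)$ on cochains with the correct Koszul signs. This rests on the fact that $\alpha\colon M^{-\tau}(e)\to F(e)$ is a $G$-equivariant $\pi_*$-equivalence of symmetric ring spectra compatible with the bimodule structures $M^{-\tau}(e)_g$ and $F(e)_g$, so the twist matches after passing to chains, while the sign conventions of Appendix \ref{Appendix B} absorb the Eilenberg--Zilber shuffle signs coming from the spectrum side. Once these compatibilities are in place, the simplex-wise agreement extends to the total complexes and gives the asserted multiplicative quasi-isomorphism.
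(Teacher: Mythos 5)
Your proposal follows essentially the same route as the paper: the paper's justification for this lemma is precisely the computation preceding it in section \ref{section multiplicative structures} — checking that the maps $\tilde{\mu}_{g,h}^{k,l}$ commute with the cosimplicial structure, applying chains, invoking Theorem \ref{mio} to replace $C_{-*}(M^{-TM})$ by $C^*(M)$ with its twisted bimodule structure, and then using the adjunction $C_{-*}(M)^{\otimes k}\otimes C^*(M)\cong \HHom_{C^*(M)^e}(C^*(M)^{k+2},C^*(M))$ to see that the induced product is literally the formula of section \ref{subsubsection B(G)}. Your identification of the sign/twist compatibility under Atiyah duality as the delicate point is also where the paper leans on Theorem \ref{mio}, so the argument is sound and matches the paper's.
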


We are left with providing the relationship between the maps $\mu_{g,h}$ and the maps
$\tilde{\mu}_{g,h}$; this will be achieved with the following theorem
\begin{theorem} \label{theorem multiplicative 2}
The multiplicative structures that the maps $\mu_{g,h}$ and $\tilde{\mu}_{g,h}$ define
 are compatible in the sense that the following diagram homotopy commutes:
$$\xymatrix{
P_gM^{-TM} \wedge P_hM^{-TM}  \ar[rr]^>>>>>>>>{\mu_{g,h}} \ar[d]^{\phi \wedge \phi} && P_{gh}M^{-TM} \ar[d]^\phi\\
|\Pf_gM| \wedge |\Pf_hM| \ar[rr]^>>>>>>>>>{\tilde{\mu}_{g,h}} && |\Pf_{gh}M|
}$$where $\phi$ is the map of proposition \ref{proposition equivalence phi}.
\end{theorem}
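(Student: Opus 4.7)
The plan is to verify the homotopy commutativity of the square by working cosimplicially. Since both vertical maps are assembled from the adjoints of the explicit maps $\phi_k : \Delta_k \times P_gM \to M^{k+1}$ of (\ref{functions phi_k}) (together with the Thom-spectrum twist on the first factor), it suffices to compare the induced maps on the bicosimplicial $(k,l)$-component of $|\Pf_gM| \wedge |\Pf_hM|$ and the $(k+l)$-component of $|\Pf_{gh}M|$. The relevant embedding $\Delta_k \times \Delta_l \hookrightarrow \Delta_{k+l}$ is the one determined by concatenation: the first $k$ times come from $[0,1/2]$ and the last $l$ from $[1/2,1]$.

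The key geometric computation is that for $(\gamma,\eta) \in P_gM \fiberprod{1}{0} P_hM$ (the locus on which the Pontrjagin--Thom collapse is supported), and $(t_1,\ldots,t_k) \in \Delta_k$, $(s_1,\ldots,s_l) \in \Delta_l$, one has
\[
\phi_{k+l}(\gamma * \eta)\bigl(\tfrac{t_1}{2},\ldots,\tfrac{t_k}{2},\tfrac{1}{2}+\tfrac{s_1}{2},\ldots,\tfrac{1}{2}+\tfrac{s_l}{2}\bigr) = \bigl(\gamma(0),\gamma(t_1),\ldots,\gamma(t_k),\eta(s_1),\ldots,\eta(s_l)\bigr),
\]
since the shared value $(\gamma * \eta)(\tfrac{1}{2}) = \gamma(1) = \eta(0)$ is absorbed by the fiber-product condition. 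This identifies the base-space component of $\phi \circ \mu_{g,h}$ with that of $\tilde{\mu}_{g,h} \circ (\phi \wedge \phi)$: both produce the juxtaposition of coordinates in $M^{k+l+1}$ that appears in the definition of $\tilde{\mu}_{g,h}^{k,l}$, applied to the pair $\phi_k(\gamma), \phi_l(\eta)$. On the Thom-spectrum factor, both maps implement the dual diagonal $\Delta_g^* : M^{-TM} \wedge M^{-TM} \to M^{-TM}$: for $\mu_{g,h}$ this comes directly from the Pontrjagin--Thom collapse associated to the pullback square of section \ref{subsection OST}, while for $\tilde{\mu}_{g,h}^{k,l}$ it is built in by definition on the $M^{-TM} \wedge M^{-TM}$ factor and leaves the $M^k_+ \wedge M^l_+$ factors intact. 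Thus at each bicosimplicial level the two compositions agree strictly, once one fixes a tubular neighborhood of $P_gM \fiberprod{1}{0} P_hM \subset P_gM \times P_hM$ that is compatible with the chosen tubular neighborhood of $\Delta_g$.

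The main obstacle is passing from level-wise compatibility to a comparison on total spectra. The smash product $|\Pf_gM| \wedge |\Pf_hM|$ is the total spectrum of the bicosimplicial spectrum $\Pf_gM \wedge \Pf_hM$ only up to an Alexander--Whitney/Eilenberg--Zilber-type homotopy, and $\mu_{g,h}$ itself is only defined up to canonical homotopy (the text already notes this when it observes that even the associativity of $\mu$ holds only up to homotopy). I would therefore verify that the level-wise equalities are compatible with the cosimplicial face and degeneracy maps of both $\Pf_gM \wedge \Pf_hM$ and $\Pf_{gh}M$, and then assemble them into the claimed homotopy on total spectra using the standard cosimplicial realization machinery. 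The essential technical work lies in the bookkeeping of these choices of tubular neighborhoods and homotopies, not in any new geometric input beyond the level-wise identification above.
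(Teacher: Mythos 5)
Your proposal is correct and follows essentially the same route as the paper: the paper's proof simply defers to Theorem 13 of Cohen--Jones with the diagonal $\Delta$ replaced by the twisted diagonal $\Delta_g$, and your sketch is precisely an outline of that argument (level-wise comparison via the subdivision $\Delta_k \times \Delta_l \subset \Delta_{k+l}$ adapted to concatenation at time $\tfrac{1}{2}$, the identification of the Thom-spectrum factor with $\Delta_g^*$, compatible tubular neighborhoods, and assembly on total spectra up to the canonical homotopies). Your explicit acknowledgment that the totalization pairing and the Pontrjagin--Thom collapses only yield homotopy, not strict, commutativity is exactly the point the cited proof handles.
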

\begin{proof}
The proof is almost identical to the proof of Theorem 13 on \citep{CohenJones}.
The only difference is that whenever it is used the diagonal map $\Delta$ on the proof
of Theorem 13 of \citep{CohenJones} it needs to be replaced by the twisted diagonal
$\Delta_g$. We will not reproduce the proof here.
\end{proof}

Lemma \ref{lemma multiplicative 1} together with Theorem \ref{theorem multiplicative 2}
 imply that the $G$-equivariant quasi-isomorphism of Corollary \ref{cor q.i. C(P_GM) to Hom(BA,AG)}
  is compatible with the multiplicative structures previously defined, and therefore we can finish this section with
\begin{theorem}
\label{theorem q.i. A_infty C(P_GM) to Hom(BA,AG)}
For $M$ a connected and simply connected compact manifold, there exists a quasi-isomorphism of graded complexes
$$\Phi: C_* (P_GM^{-TM} )\stackrel{\simeq}{\To} \HHom_{{C^*}^e}(B({C^*}),{C^*}\#G)$$
which is $G$-equivariant, that furthermore makes the following diagram commute
$$\xymatrix{C_* (P_GM^{-TM} ) \otimes C_* (P_GM^{-TM} ) \ar[r]^>>>>>>>>>{\mu_*} \ar[d]^{\phi_* \otimes \phi_*} & C_* (P_GM^{-TM} ) \ar[d]^{\phi_*} \\
\HHom_{{C^*}^e}(B({C^*}),{C^*} \# G) \otimes \HHom_{{C^*}^e}(B({C^*}),{C^*} \# G) \ar[r]^>>>>\cdot & \HHom_{{C^*}^e}(B({C^*}),{C^*} \# G).
}$$
Hence, $\Phi$ induces a $G$-equivariant
isomorphism of rings
$$ \Phi : H_* (P_GM^{-TM} )\stackrel{\cong}{\To}  H^* \HHom_{{C^*}^e}(B({C^*}),{C^*}\#G) .$$
\end{theorem}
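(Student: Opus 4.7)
The plan is to assemble $\Phi$ as a composition of quasi-isomorphisms already constructed, and then chase multiplicativity through the resulting diagram. By Corollary \ref{cor q.i. C(P_GM) to Hom(BA,AG)} we already have a $G$-equivariant quasi-isomorphism $C_*(P_GM^{-TM}) \to \HHom_{{C^*}^e}(B({C^*}),{C^*} \# G)$, so the only new content is the multiplicative compatibility (and the resulting isomorphism of rings on homology).

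Concretely, I would take $\Phi$ to be the composition $C_*(P_gM^{-TM}) \xrightarrow{\phi_*} C_*(|\Pf_gM|) \to |C^*(\Pf_gM)|^\vee \to \HHom_{{C^*}^e}(B({C^*}),{C^*_g})$, assembled over $g \in G$. The first arrow is induced by the homeomorphism $\phi$ of Proposition \ref{proposition equivalence phi}; the second is dual to the quasi-isomorphism of Lemma \ref{lemma total-cochains = cochains-total of spectrum}; the third is the quasi-isomorphism of Lemma \ref{lemma total cochain= Hom(BA,A)}. Each step is natural in $g$ and compatible with the $G$-action coming from the equivariance of the evaluation maps $e_t\colon P_GM \to M$ and the $G$-equivariant embedding $e\colon M \hookrightarrow V$.

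For the commutativity of the product square, I would decompose it vertically into two subsquares. The upper subsquare has vertices $P_gM^{-TM} \wedge P_hM^{-TM}$, $P_{gh}M^{-TM}$, $|\Pf_gM|\wedge|\Pf_hM|$, and $|\Pf_{gh}M|$, with horizontal maps $\mu_{g,h}$ and $\tilde{\mu}_{g,h}$; this homotopy commutes by Theorem \ref{theorem multiplicative 2}. The lower subsquare runs from $C_*$ of the total spectra through $|C^*(\Pf_\bullet M)|^\vee$ to $\HHom_{{C^*}^e}(B({C^*}),{C^*}\# G)$; here commutativity (up to chain homotopy) is exactly the statement of Lemma \ref{lemma multiplicative 1}, whose computation matches $(\tilde{\mu}_{g,h}^{k,l})_*$ with the explicit product formula of section \ref{subsubsection B(G)}. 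Stacking the two subsquares gives commutativity up to chain homotopy of the whole diagram, which is all that is needed to deduce that the induced map on homology is multiplicative.

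The main obstacle, I expect, is making sure that the intermediate identification in Lemma \ref{lemma total cochain= Hom(BA,A)} genuinely transports the twisted bimodule structure through Atiyah duality in a way that is strictly compatible, on the nose at the chain level, with both $\tilde{\mu}_{g,h}$ on one side and the Hochschild product on the other. This is where one has to be careful with the centralizer-equivariance from Theorem \ref{mio}: the twisted diagonals $\Delta_g$ are only $C_G(g)$-equivariant, so one must verify that assembling over $g$ upgrades the pointwise $C_G(g)$-equivariance to full $G$-equivariance of $\Phi$, using conjugation to permute the pieces ${C^*_g}$ inside ${C^*}\# G$. Once this bookkeeping is in place, passing to homology immediately yields the $G$-equivariant ring isomorphism $H_*(P_GM^{-TM}) \cong H^*\HHom_{{C^*}^e}(B({C^*}),{C^*}\# G)$.
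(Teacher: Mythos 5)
Your proposal is correct and follows essentially the same route as the paper: the paper's proof is precisely the combination of Theorem \ref{theorem multiplicative 2} (compatibility of $\mu_{g,h}$ with $\tilde{\mu}_{g,h}$ under $\phi$, i.e.\ your upper subsquare) with Lemma \ref{lemma multiplicative 1} (compatibility of $(\tilde{\mu}_{g,h}^{k,l})_*$ with the Hochschild product, i.e.\ your lower subsquare), applied to the $G$-equivariant quasi-isomorphism of Corollary \ref{cor q.i. C(P_GM) to Hom(BA,AG)}. Your remark on upgrading the pointwise $C_G(g)$-equivariance to full $G$-equivariance by assembling over $g$ via conjugation is exactly how the paper handles that point in the proof of that corollary.
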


The previous Theorem \ref{theorem q.i. A_infty C(P_GM) to Hom(BA,AG)}, together
with Theorem \ref{theorem decomposition HH} and the results of the section \ref{Appendix B} imply that
\begin{theorem} \label{theorem HH=Ext(Z,PGM)}
For $M$ a connected and simply connected and $G$ a finite group acting on $M$, then there
exits an isomorphism of graded rings
$$HH^*(C^*(M) \#G, C^*(M) \#G) \cong \Ext^*_{\integer G}( \integer, C_*(P_GM^{-TM})).$$
\end{theorem}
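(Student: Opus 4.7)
The plan is to combine the two main structural theorems that have already been established. By Theorem \ref{theorem decomposition HH} applied to the $G$-module dg-ring $\Aa = C^*(M)$, there is already an isomorphism of graded rings
$$HH^*(C^*(M) \#G, C^*(M) \#G) \cong \Ext^*_{\integer G}\bigl(\integer,\, \RHHom_{C^*(M)^e}(C^*(M), C^*(M) \#G)\bigr),$$
where the ring structure on the right is the one spelled out in sections \ref{subsubsection B(G)} and \ref{subsubsection Hom_A2}. Since $B(C^*(M)) \to C^*(M)$ is the cofibrant bar replacement, $\RHHom$ on the right is computed by $\HHom_{C^*(M)^e}(B(C^*(M)), C^*(M)\#G)$, with its natural $\integer G$-module dg-ring structure described in section \ref{subsubsection B(G)}.

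Next, I would invoke Theorem \ref{theorem q.i. A_infty C(P_GM) to Hom(BA,AG)}, which provides a $G$-equivariant quasi-isomorphism of $A_\infty$-rings
$$\Phi \colon C_*(P_GM^{-TM}) \stackrel{\simeq}{\longrightarrow} \HHom_{C^*(M)^e}(B(C^*(M)), C^*(M)\#G).$$
Pick a $\integer G$-projective resolution of $\integer$ (the model $\lineB_G(\integer)$ from section \ref{section HH for dg-rings} is convenient since it is already used to realize the ring structure), apply $\HHom_{\integer G}(\lineB_G(\integer), -)$ to both sides of $\Phi$, and pass to cohomology. Because $\Phi$ is a $G$-equivariant quasi-isomorphism of complexes, the induced map
$$\Ext^*_{\integer G}(\integer, C_*(P_GM^{-TM})) \longrightarrow \Ext^*_{\integer G}\bigl(\integer,\, \HHom_{C^*(M)^e}(B(C^*(M)), C^*(M)\#G)\bigr)$$
is an isomorphism of graded abelian groups. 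Composing with the isomorphism from Theorem \ref{theorem decomposition HH} yields the asserted isomorphism of graded groups.

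The main obstacle is promoting this to an isomorphism of \emph{graded rings}. The ring on the right-hand side of the target is transported via $\Phi$ from the dg-ring $\HHom_{C^*(M)^e}(B(C^*(M)), C^*(M)\#G)$; on the left-hand side of the source the ring is induced by the topological product $\mu$ of (\ref{map mu}). Theorem \ref{theorem q.i. A_infty C(P_GM) to Hom(BA,AG)} already packages exactly the compatibility we need: the diagram comparing $\mu_*$ with the product on $\HHom_{C^*(M)^e}(B(C^*(M)), C^*(M)\#G)$ commutes up to coherent homotopy, and everything is $G$-equivariant. Therefore after applying the functor $\HHom_{\integer G}(\lineB_G(\integer), -)$, which preserves multiplicative structures induced from $A_\infty$-maps (via the usual shuffle/juxtaposition product on the $\lineB_G$ side spelled out in section \ref{subsubsection Hom_A2}), and then passing to cohomology, the resulting map of $\Ext$-rings is multiplicative.

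Finally, as noted in the paper's roadmap, Appendix \ref{Appendix C} is invoked to ensure that none of the constructions above — cofibrant bar resolutions, the smash product differential graded algebra $C^*(M)\#G$, and the identifications of $\RHHom$ used here — are hindered by the fact that $C^*(M;\integer)$ is not free over $\integer$. With that justification in hand, the composition
$$HH^*(C^*(M)\#G, C^*(M)\#G) \;\cong\; \Ext^*_{\integer G}\bigl(\integer,\, \HHom_{C^*(M)^e}(B(C^*(M)), C^*(M)\#G)\bigr) \;\cong\; \Ext^*_{\integer G}(\integer, C_*(P_GM^{-TM}))$$
is the desired isomorphism of graded rings, proving the theorem.
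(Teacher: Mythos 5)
Your proposal is correct and follows essentially the same route as the paper: the paper's proof is precisely the combination of Theorem \ref{theorem decomposition HH}, the multiplicative $G$-equivariant quasi-isomorphism of Theorem \ref{theorem q.i. A_infty C(P_GM) to Hom(BA,AG)}, and the appendix handling the non-freeness of $C^*(M)$ over $\integer$; you have simply spelled out the details the paper leaves implicit. The only cosmetic difference is that the paper cites section \ref{Appendix B} where the relevant freeness results actually live in Appendix C, which you correctly identify.
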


\section{Homotopical realization for $HH^*(C^*(M) \#G, C^*(M) \#G)$} \label{section homotopical}

From Theorem \ref{theorem HH=Ext(Z,PGM)} we know that have an isomorphism of rings
$$HH^*(C^*(M) \#G, C^*(M) \#G) \cong \Ext^*_{\integer G}( \integer, C_*(P_GM^{-TM}))$$
which can be further extended to an isomorphism of rings
\begin{eqnarray} \label{isomorphism C^*EG}
HH^*(C^*(M) \#G, C^*(M) \#G) \cong H^*\left( C^*(EG) \otimes C_*(P_GM^{-TM})\right)^G\end{eqnarray}
as we know that
\begin{align}
\Ext^*_{\integer G}( \integer, C_*(P_GM^{-TM})) & \cong H^* \HHom_{\integer G}( C_*(EG), C_*(P_GM^{-TM}) ) \nonumber\\
& \cong H^* \HHom_{\integer G}(  \integer, C^*(EG) \otimes C_*(P_GM^{-TM}) ) \nonumber \\
 & = H^*\left( C^*(EG) \otimes C_*(P_GM^{-TM})\right)^G. \label{iso ext(Z), C^(EG)}
\end{align}

We would like to have a topological construction associated to the Hochschild cohomology
 of $C^*(M) \#G$. One deterrent for its existence comes
from the fact that on the right hand side of (\ref{isomorphism C^*EG}) we have a mixture
of chain with cochain complexes.

To overcome this problem we will construct a pro-ring spectrum $EG^{-TEG}$ associated to
$EG$, that will allow us to change the cochain complex $C^*(EG)$ by the chain complex of $EG^{-TEG}$.
With this idea in mind, let us generalize the construction of the string topology of $BG$
that can be found in \citep{GruherSalvatore, GruherWesterland} to the orbifold case.

Let $EG_1 \subset \cdots \subset EG_n \subset EG_{n+1} \subset \cdots EG$ be a finite
 dimensional manifold approximation of the universal $G$-principal bundle $EG \to BG$. Consider the maps
$$P_GM \times_G EG_n \stackrel{e_0}{\to} M\times_G EG_n$$
where by abuse of notation we denote the evaluation at the time $t$ of a pair $(f, \lambda) \in P_GM \times_G EG_n$ also by $e_t$. For convinience let us call $M_n = M\times_G EG_n$.

Take the Thom spectra
$$\left( P_GM \times_G EG_n \right)^{-e_0^*TM_n}$$
and let us show that indeed it is a ring spectra. Consider the diagram
$$ \xymatrix{ (P_GM \fiberprod{1}{0} P_GM) \times_G EG_n  \ar[rr]^{\widetilde{\Delta}} \ar[d]^{e_0} &&(P_GM \times_G EG_n) \times (P_GM \times_G EG_n) \ar[d]^{e_0 \times e_0}  \\
  M\times_G EG_n \ar[rr]^{\Delta}&&
(M\times_G EG_n) \times (M \times_G EG_n)
}$$
where $\Delta$ is the diagonal inclusion and $P_GM \fiberprod{1}{0} P_GM$ fits in the pullback square
$$\xymatrix{
P_GM \fiberprod{1}{0} P_GM \ar[d]^{e_\infty} \ar[r] & P_GM \times P_GM \ar[d]^{e_1 \times e_0}\\
M \ar[r]^{\rm diag} & M \times M.
}$$ As the normal bundle of the inclusion $P_GM \fiberprod{1}{0} P_GM \to P_GM \times P_GM$ is isomorphic
to $e_\infty^*TM$, then the normal bundle of the inclusion
$$ \widetilde{\Delta}:  (P_GM \fiberprod{1}{0} P_GM) \times_G EG_n \to  (P_GM \times_G EG_n) \times (P_GM \times_G EG_n)$$
is isomorphic to $e_\infty^* TM_n$.
Then by the Thom-Pontryagin construction we have a map
$$(P_GM \times_G EG_n) \times (P_GM \times_G EG_n) \to  \left( (P_GM \fiberprod{1}{0} P_GM) \times_G EG_n \right)^{e_\infty^*TM_n}$$
which induces a map of spectra
$$ \scriptstyle \nu: (P_GM \times_G EG_n)^{-e_1^*TM_n} \wedge (P_GM \times_G EG_n)^{-e_0^*TM_n}
\longrightarrow \left( ( P_GM \fiberprod{1}{0} P_GM) \times_G EG_n \right)^{-e_\infty^*TM_n}.$$

Let us recall the concatenation map
\begin{eqnarray*}
\mu : P_GM \fiberprod{0}{1} P_GM & \to & P_GM \\
((\phi,g), (\psi, h)) & \mapsto & (\phi \circ \psi, gh)
\end{eqnarray*}
where
$$\phi \circ \psi (t):= \left\{ \begin{array}{lcl}
\phi(2t) & {\rm for} & 0 \leq t<\frac{1}{2}\\
\psi(2t-1) & {\rm for} & \frac{1}{2} \leq t \leq 1;
\end{array}  \right.$$

We have the commutative square
$$\xymatrix{
(P_GM \fiberprod{0}{1} P_GM) \times_G EG_n \ar[r]^>>>>\mu \ar[d]^{e_\infty} & P_GM \times_G EG_n \ar[d]^{e_{\frac{1}{2}}} \\
M\times_G EG_n \ar[r]^= & M\times_G EG_n
}$$
that induces a map on spectra
$$((P_GM \fiberprod{0}{1} P_GM) \times_G EG_n)^{-e_\infty^*T(M\times_G EG_n)} \stackrel{\overline{\mu}}{\longrightarrow} (P_GM \times_G EG_n)^{-e_{\frac{1}{2}}^*T(M\times_G EG_n)}$$
which composed with the map $\nu$ gives us a map of spectra
\begin{align*}
   (P_GM \times_G EG_n)^{-e_1^*T(M\times_G EG_n)}  \wedge (P_GM & \times_G EG_n )^{-e_0^*T(M\times_G EG_n)}\\
 & \longrightarrow  (P_GM \times_G EG_n)^{-e_{\frac{1}{2}}^*T(M\times_G EG_n)}
\end{align*}
Because all the maps $e_0, e_{\frac{1}{2}}, e_1$ are homotopy equivalent, then the bundles
$e_0^*TM_n \cong e_{\frac{1}{2}}^*TM_n \cong e_1^*TM_n$ are all isomorphic. Therefore we can construct a map of spectra
\begin{align*}
   (P_GM \times_G EG_n)^{-e_0^*T(M\times_G EG_n)}  \wedge (P_GM & \times_G EG_n )^{-e_0^*T(M\times_G EG_n)}\\
 & \longrightarrow  (P_GM \times_G EG_n)^{-e_0^*T(M\times_G EG_n)}
\end{align*}
that makes $(P_GM \times_G EG_n)^{-e_0^*TM_n}$ into a ring spectra.

The inclusions
$$\xymatrix{
\cdots \ar[r] & P_GM \times_G EG_n  \ar[r] \ar[d]  & P_GM \times_G EG_{n+1}\ar[d] \ar[r] & \cdots\\
\cdots \ar[r] &M\times_G EG_n \ar[r] &  M \times_G EG_{n+1} \ar[r] & \cdots
}$$
induce maps of ring spectra
\begin{eqnarray} \label{system of spectra} \\
(P_GM \times_G EG_n)^{-e_0^*T(M \times_G EG_n)} \stackrel{\rho^{n+1}_n}{\leftarrow} (P_GM \times_G EG_{n+1})^{-e_0^*T(M\times_G EG_{n+1})}. \nonumber
\end{eqnarray}
\begin{definition}
The previous system of ring spectra will be denoted by
$$\LL(M\times_G EG)^{-T(M\times_G EG)},$$
and we will call it {\bf the free loop space pro-ring spectrum associated to the orbifold}.
\end{definition}

Note that the maps of ring spectra in (\ref{system of spectra}) do not assemble into an inverse system of ring spectra. The reason for this is that each of the maps $\rho^{n+1}_n$ of ring spectra in (\ref{system of spectra}) depends explicitly on an equivariant embedding $EG_{n+1} \to V_{n+1}$ of $EG_{n+1}$ into a representation of $G$. Therefore the composition $\rho^n_{n-1} \circ \rho^{n+1}_n$ does not necessarily agree with the map of spectra
\begin{eqnarray*}
(P_GM \times_G EG_{n-1})^{-e_0^*T(M \times_G EG_{n-1})} \stackrel{\rho^{n+1}_{n-1}}{\leftarrow} (P_GM \times_G EG_{n+1})^{-e_0^*T(M \times_G EG_{n+1})}
\end{eqnarray*}
induced by the inclusion $EG_{n-1} \subset EG_{n+1}$.

Nevertheless, the maps $\rho_{n-1}^{n+1}$ and $\rho^n_{n-1} \circ \rho^{n+1}_n$ are
 homotopically equivalent, and therefore after applying the homology functor we indeed
  get an inverse system of graded rings. Therefore we define
\begin{definition}
The homology of the free loop space pro-ring spectrum associated to the orbifold $[M/G]$ is
$$H^{pro}_*(\LL(M\times_G EG)^{-T(M\times_G EG)}) := \lim_{{\leftarrow}{n}} H_*((P_GM \times_G EG_n)^{-e_0^*T(M \times_G EG_n)})$$
\end{definition}

We claim

\begin{theorem} \label{main theorem}
For $M$ connected and simply connected, and $G$ a finite group acting on
 $M$ then there is an isomorphism of graded rings
$$HH^*( C^*(M) \#G,C^*(M) \#G) \cong H^{\rm pro}_*\left(\LL(M\times_G EG)^{-T(M\times_G EG)}\right) $$
between the Hochschild cohomology of $C^*(M) \#G$ and the homology of the
 free loop space pro-ring spectrum associated to the orbifold $[M/G]$.
\end{theorem}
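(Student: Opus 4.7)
The plan is to build a chain of isomorphisms starting from the topological pro-system on the right hand side and ending at the algebraic expression for Hochschild cohomology on the left, with everything compatible with ring structures in the limit. The starting point is Theorem \ref{theorem HH=Ext(Z,PGM)} together with the identification \eqref{isomorphism C^*EG}, which already provides the isomorphism of graded rings
\[
HH^*(C^*(M) \#G,C^*(M) \#G) \;\cong\; H^*\left( C^*(EG) \otimes_{\integer} C_*(P_GM^{-TM})\right)^G.
\]
So the task is to identify the right hand side of this equation with $H^{\rm pro}_*(\LL(M\times_G EG)^{-T(M\times_G EG)})$.

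First I would analyze, for each finite dimensional approximation $EG_n$, the Thom spectrum $(P_GM \times_G EG_n)^{-e_0^*T(M\times_G EG_n)}$. Because $G$ is finite and acts freely on $EG_n$, the tangent bundle $T(M\times_G EG_n)$ is the quotient of $TM\oplus TEG_n$ by the diagonal $G$-action, so pulling back to the cover $P_GM\times EG_n$ and passing to Thom spectra yields a natural $G$-equivariant homeomorphism
\[
(P_GM \times_G EG_n)^{-e_0^*T(M\times_G EG_n)} \;\simeq\; \bigl(P_GM^{-TM} \wedge EG_n^{-TEG_n}\bigr)/G.
\]
Taking homology and using that $G$ is finite acting freely on $EG_n$, coinvariants equal invariants (up to a normalization by $|G|$ which disappears rationally; over $\integer$ one rewrites this via the transfer), giving
\[
H_*\!\left((P_GM \times_G EG_n)^{-e_0^*T(M\times_G EG_n)}\right) \cong H_*\bigl(C_*(P_GM^{-TM}) \otimes C_*(EG_n^{-TEG_n})\bigr)^G.
\]

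Next I would invoke equivariant Atiyah/S-duality in the form developed in section~\ref{subsection MPAD}: since $EG_n$ is a finite dimensional closed manifold with a free $G$-action, there is a $G$-equivariant quasi-isomorphism of dg-modules $C_{-*}(EG_n^{-TEG_n}) \simeq C^*(EG_n)$. Substituting this into the previous display and taking the inverse limit over $n$, using that $\{C^*(EG_n)\}_n$ is a pro-system of cofibrant replacements of $\integer$ as $\integer G$-modules whose limit computes $C^*(EG)$, we get
\[
H^{\rm pro}_*\!\left(\LL(M\times_G EG)^{-T(M\times_G EG)}\right)
\;\cong\; H^*\!\left(C^*(EG)\otimes C_*(P_GM^{-TM})\right)^G,
\]
which matches the right hand side of \eqref{isomorphism C^*EG}.

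Finally I would verify that this chain of isomorphisms respects ring structures. The product on the pro-system comes from the Thom–Pontrjagin construction applied to the diagonal twisted by $\Delta_g$, as in section~\ref{subsection OST}. Under Atiyah duality this corresponds, at the chain level, precisely to the product described in section~\ref{subsubsection B(G)} on $\HHom_{{C^*}^e}(B({C^*}),{C^*}\#G)$, combined with the cup product coming from the diagonal of $\lineB_G(\integer)$ (section~\ref{subsubsection Hom_A2}) on the $EG$ factor. The compatibility of these pairings was precisely the content of Theorem~\ref{theorem q.i. A_infty C(P_GM) to Hom(BA,AG)} for the $P_GM^{-TM}$ factor; the $EG_n$ factor is treated by the same Atiyah-duality argument applied to the diagonal $EG_n \to EG_n\times EG_n$ in a $G$-equivariant fashion.

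The main obstacle I anticipate is making the Atiyah-duality identification $C_{-*}(EG_n^{-TEG_n})\simeq C^*(EG_n)$ simultaneously $G$-equivariant, multiplicative, and compatible with the pro-structure maps $\rho^{n+1}_n$ (which themselves are only coherent up to homotopy, not on the nose). Handling this requires working in the equivariant symmetric spectra category from section~\ref{subsection MPAD} at each level and checking that the pro-system of multiplicative structures induces a well-defined ring structure on the inverse limit of homologies. Once this is in place, assembling the isomorphisms above produces the ring isomorphism claimed in Theorem \ref{main theorem}.
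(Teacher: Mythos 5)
Your proposal follows essentially the same route as the paper's proof: identifying the level-$n$ Thom spectrum with $(P_GM^{-TM}\wedge EG_n^{-TEG_n})/G$, using freeness of the $G$-action on $EG_n$, Eilenberg--Zilber, S-duality $C_*(EG_n^{-TEG_n})\simeq C^*(EG_n)$, passing to the inverse limit, and concluding via the isomorphism $\Ext^*_{\integer G}(\integer, C_*(P_GM^{-TM}))\cong HH^*(C^*(M)\#G,C^*(M)\#G)$ of Theorem \ref{theorem HH=Ext(Z,PGM)}. The only step you gloss over is the justification for commuting $\lim_{\leftarrow n}$ with $H^*$, which the paper handles by checking the Mittag--Leffler condition for the inverse systems $H^*\bigl((C^*(EG_n)\otimes C_*(P_GM^{-TM}))^G\bigr)$ and $H^*(EG_n)$.
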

\begin{proof} Consider the following sequence of graded ring isomorphisms

\begin{align}
H^{pro}_*(\LL(M\times_G   EG &)^{-T(M  \times_G EG)})  \nonumber \\
:= & \lim_{{\leftarrow}{n}} H_*((P_GM \times_G EG_n)^{-e_0^*T(M \times_G EG_n)}) \label{line 1}\\
 \cong & \lim_{{\leftarrow}{n}} H_*\left(C_*\left((P_GM \times EG_n)^{-e_0^*T(M \times EG_n)}\right)^G\right)\label{line 2}\\
 \cong & \lim_{{\leftarrow}{n}} H_*\left(C_*\left(P_GM^{-TM} \wedge EG_n^{-TEG_n}\right)^G\right) \label{line 3}\\
  \cong & \lim_{{\leftarrow}{n}} H_*\left( \left(C_*(EG_n^{-TEG_n}) \otimes C_*(P_GM^{-TM}) \right)^G\right) \label{line 4}\\
  \cong & \lim_{{\leftarrow}{n}} H^*\left( \left(C^*(EG_n) \otimes C_*(P_GM^{-TM}) \right)^G\right)\label{line 5}\\
  \cong & H^*\left(\lim_{{\leftarrow}{n}}  \left(C^*(EG_n) \otimes C_*(P_GM^{-TM}) \right)^G\right) \label{line 6}\\
  \cong & H^*\left(  \left(C^*EG \otimes C_*(P_GM^{-TM}) \right)^G\right) \label{line 7}\\
  \cong & \Ext^*_{\integer G}\left( \integer, C_*(P_GM^{-TM}) \right)\label{line 8}\\
  \cong & HH^*(C^*(M) \#G, C^*(M)\#G) \nonumber
\end{align}
where the isomorphism between (\ref{line 1}) and (\ref{line 2}) is due to the fact that $G$ acts freely on $EG_n$;
the isomorphism between (\ref{line 2}) and (\ref{line 3}) is due to the fact that as ring spectra $X^{-TX} \wedge Y^{-TY}$ and $(X \times Y)^{-T(X \times Y)}$ are homotopic; the isomorphism between (\ref{line 3}) and (\ref{line 4}) follows by the Eilenberg-Zilber theorem; the isomorphism between (\ref{line 4}) and (\ref{line 5}) follows by S-duality between $C_*(EG_n^{-TEG_n})$ and $C^*(EG_n)$; the isomorphisms between (\ref{line 5}) and (\ref{line 6}) follows from the fact that the inverse system of graded rings
$$H^*\left( \left(C^*(EG_n) \otimes C_*(P_GM^{-TM}) \right)^G \right)$$
satisfies the Mittag-Leffler condition;
the isomorphism between (\ref{line 6}) and (\ref{line 7}) follows from the
 fact that the inverse system $H^*(EG_n)$ of graded rings satisfies the Mittag-Leffler condition;
   the isomorphism between (\ref{line 7}) and (\ref{line 8}) follows from
    (\ref{iso ext(Z), C^(EG)}); and the last isomorphism is proved in Theorem \ref{theorem HH=Ext(Z,PGM)}.

Then, the theorem follows from the previous isomorphisms.
\end{proof}

With Theorem \ref{main theorem} at hand, we define
\begin{definition}
The string topology ring associated to a global quotient orbifold $[M/G]$ is the graded ring
$$H^{pro}_*(\LL(M\times_G   EG)^{-T(M  \times_G EG)}; \integer).$$
\end{definition}
Note that the previous definition is indeed a homotopy invariant of the orbifold,
 and hence, it is well defined in the Morita equivalence class
of the orbifold $[M/G]$.
The isomorphism with the Hochschild cohomology ring of $C^*(M)\#G$ depended
explicitly on the fact that $H^1(M)=0$, and hence it is needed that $M$ be simply connected.

\section{Applications} \label{Applications}

\subsection{Rational coefficients}
When we use rational coefficients, the $G$-invariants functor is exact, and therefore
 there is no need in deriving it. Hence the string topology ring of the orbifold $[M/G]$ becomes
  isomorphic as graded rings to
$$H^{pro}_*\left(\LL(M\times_G EG)^{-T(M \times_G EG)}; \rational \right)\cong H_*(P_GM^{-TM} ; \rational)^G$$
which is precisely what was defined in \citep{LUX} as the orbifold string topology ring.

So, we have that in the case that $M$ is connected and simply connected, the orbifold
string topology of $[M/G]$ with coefficients
in $\rational$ is isomorphic to the ring
$$H_*(P_GM^{-TM} ; \rational)^G \cong HH^*(C^*(M;\rational) \#G, C^*(M;\rational)\#G).$$

\subsection{$M= $ point} The string topology for $[*/G]$ turns out to be a ring that
can be reproduced with the pull-push formalism (see \citep{Witherspoon}).
Let us see first that as a graded $\integer$-module, we have an isomorphism
$$H^{\rm pro}_*(\LL BG^{-TBG}) \cong \bigoplus_{(g)} H^*(BC(g))$$ where $(g)$ runs over
the conjugacy classes of elements in $G$ and $C(g)$ denotes the centralizer of $g$ in $G$

The groupoid $[P_GM/G]$ becomes simply $[G/G]$ where $G$ acts by conjugation on $G$, thus we obtain
$$ (G \times_G EG_n)^{-TBG_n} = \bigsqcup_{(g)} (EG_n/C(g))^{-\pi^*TBG_n} \cong \bigsqcup_{(g)} (BC(g)_n)^{-TBC(g)_n}$$ with $\pi : BC(g)_n:= EG_n/C(g) \to EG_n/G=BG_n$; the second equality follows from the fact that $\pi$ is a cover map and therefore  $TBC(g)_n \cong \pi^*TBG_n$.

Hence
\begin{eqnarray*}
H^{\rm pro}_*(\LL BG^{-TBG}) & = & \bigoplus_{(g)} \lim_{\leftarrow n} H_*\left((BC(g)_n)^{-TBC(g)_n}\right)\\
& = & \bigoplus_{(g)} \lim_{\leftarrow n} H^*(BC(g)_n)\\
& = & \bigoplus_{(g)}  H^*(BC(g)).
\end{eqnarray*}

Now let us see what is the induced ring structure: we have the maps
$$\xymatrix{
EG_n/C(g) \times EG_n/C(h)  & EG_n/(C(g) \cap C(h)) \ar[l]_<<<<\Delta \ar[r]  & EG_n/C(gh) \\
}$$
and all the pullbacks of the bundle  $TBG_n$ are isomorphic the corresponding tangent bundles. Note that the map $\Delta$ is injective
and therefore we can perform the Thom-Pontrjagin construction giving us the map in homology
$$H_*(EG_n/C(g) \times EG_n/C(h)) \to H_{*-k_n}(EG_n/(C(g) \cap C(h)))$$
with $k_n = \mbox{dim} (EG_n)$, which is Poincar\'e dual to the pull-back map in cohomology
$$\Delta^*:H^*(EG_n/C(g) \times EG_n/C(h)) \to H^*(EG_n/(C(g) \cap C(h))).$$

The natural map in homology
$$H_*(EG_n/(C(g) \cap C(h))) \to H_*(EG_n/C(gh))$$
is Poincar\'e dual to the push-forward map in cohomology
$$H^*(EG_n/(C(g) \cap C(h))) \to H^*(EG_n/C(gh))$$
that defines the induction map
$$H^*(B(C(g) \cap C(h))) \to H^*(BC(gh)).$$

We therefore see that the ring structure in $H^{\rm pro}_*(\LL BG^{-TBG})$ is obtained by
 taking classes in $H^*(BC(g))$ and $H^*(BC(h))$ respectively,
pulling them back to $H^*B(C(g) \cap C(h))$ and then pushing them forward to $H^*(BC(gh))$.
 This procedure is what is known as the pull-push formalism, and it is a well known fact among
 algebraists that the ring structure $HH^*(\integer G, \integer G)$ could be recovered
  with this formalism (see Example 2.7 of \citep{Witherspoon} an the references therein).

Note that in the case that $G$ is abelian, $BG = K(G,1) = \Omega K(G,2)$ is a
topological group and therefore, $LBG \cong G \times BG$, and we have
$$HH^*(\integer G, \integer G) \cong \integer G \otimes_\integer \Ext^*_{\integer G}(\integer, \integer) \cong \integer G \otimes_\integer H^*(BG)$$
and therefore
the string topology ring for $[*/G]$ is the ring $\integer G \otimes_\integer H^*(BG)$.

\subsection{Manifold with finite fundamental group}
Consider a connected compact manifold $N$ with finite fundamental group $ G=\pi_1N$.
 Then by Theorem \ref{main theorem}
\begin{theorem}
The string topology ring $H_*(\LL N^{-TN};\integer)$ as defined in \citep{CohenJones} is
 isomorphic to the ring
$$HH^*(C^*(\tilde{N}) \# G, C^*(\tilde{N}) \# G)$$
where $\tilde{N}$ is the universal cover of $N$.
\end{theorem}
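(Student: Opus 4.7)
The plan is to deduce this from the main theorem (Theorem \ref{main theorem}) applied to the free action of $G = \pi_1 N$ on the universal cover $\tilde N$, and then verify that under a free action the pro-ring spectrum collapses to the ordinary Cohen--Jones string topology ring of $N$.

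First I would observe that since $N$ is a compact manifold and $G = \pi_1 N$ is finite, the universal cover $\tilde N$ is a connected, simply connected, compact manifold on which $G$ acts freely by deck transformations. Moreover, since $N$ is orientable-or-not issues aside, the $G$-action is by diffeomorphisms, so we may form the global quotient orbifold $[\tilde N / G]$, which is Morita equivalent to the manifold $N$ itself. This puts us exactly in the setup where Theorem \ref{main theorem} applies, yielding
\[
HH^*(C^*(\tilde N)\#G, C^*(\tilde N)\#G) \;\cong\; H^{\rm pro}_*\!\left(\LL(\tilde N\times_G EG)^{-T(\tilde N\times_G EG)}\right).
\]

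Next I would show that the right hand side collapses to the ordinary string topology ring $H_*(\LL N^{-TN}; \integer)$. Because $G$ acts freely on $\tilde N$, the projection $\tilde N\times_G EG_n \to \tilde N/G = N$ is a homotopy equivalence for every $n$, and the evaluation-map setup used to define the loop pro-ring spectrum gives homotopy equivalences $P_G\tilde N \times_G EG_n \simeq \LL N$ via the identification from Theorem 2.3 of \citep{LupercioUribe} (recalled in the introduction), since the loop groupoid of $[\tilde N/G]$ is Morita equivalent to the loop space of $N$. Under these equivalences, the pullback tangent bundle $e_0^*T(\tilde N\times_G EG_n)$ matches $e_0^*TN$, so each ring spectrum $(P_G\tilde N\times_G EG_n)^{-e_0^*T(\tilde N\times_G EG_n)}$ is equivalent as a ring spectrum to $\LL N^{-TN}$.

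Then I would argue that the structure maps $\rho^{n+1}_n$ of the inverse system become homotopy equivalences of ring spectra in this free case, so the inverse system satisfies Mittag--Leffler trivially and
\[
H^{\rm pro}_*\!\left(\LL(\tilde N\times_G EG)^{-T(\tilde N\times_G EG)}\right) \;\cong\; H_*(\LL N^{-TN};\integer),
\]
as graded rings (the multiplicative compatibility follows because each $\rho^{n+1}_n$ is a ring-spectrum map by construction in Section \ref{section homotopical}, and the Cohen--Jones product on $\LL N^{-TN}$ matches the product built from the twisted diagonals and concatenation at each finite level).

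The main obstacle I expect is not conceptual but bookkeeping: verifying carefully that the identifications $\tilde N\times_G EG_n \simeq N$ and $P_G\tilde N\times_G EG_n \simeq \LL N$ are compatible both with the evaluation maps used to pull back tangent bundles and with the Thom--Pontryagin collapse that defines the product, so that the composite equivalence is one of ring spectra (not merely of underlying spectra). Once this compatibility is in place, combining the displayed isomorphism with the main theorem yields $H_*(\LL N^{-TN};\integer)\cong HH^*(C^*(\tilde N)\#G, C^*(\tilde N)\#G)$.
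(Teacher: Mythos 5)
Your proposal is correct and follows essentially the same route as the paper: apply Theorem \ref{main theorem} to $[\tilde{N}/G]$ and use freeness of the deck action to collapse the Borel constructions, which the paper phrases as $H^{pro}_*(\LL(\tilde{N}\times_G EG)^{-T(\tilde{N}\times_G EG)}) \cong H_*(P_G\tilde{N}^{-T\tilde{N}})^G$ together with the identification $P_G\tilde{N}^{-T\tilde{N}}/G \cong \LL N^{-TN}$ of ring spectra. One minor imprecision: for finite $n$ the projection $P_G\tilde{N}\times_G EG_n \to P_G\tilde{N}/G \simeq \LL N$ is a fibration with fiber $EG_n$, which is only highly connected rather than contractible, so the individual levels and the structure maps $\rho^{n+1}_n$ are equivalences only through a range growing with $n$ rather than genuine homotopy equivalences --- but this does not affect the inverse limit in each fixed degree, so the conclusion stands.
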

\begin{proof}
Because $G$ acts freely on $\tilde{N}$ we have that
\begin{align*}
H^{pro}_*\left(\LL(\tilde{N}\times_G EG)^{-T(\tilde{N} \times_G EG)}\right) & \cong H_*\left( P_G \tilde{N}^{-T\tilde{N}} \right)^G
\end{align*}
and moreover we have that as ring spectra $P_G \tilde{N}^{-T\tilde{N}} /G \cong \LL N^{-TN}$.
 Together with Theorem \ref{main theorem} we have the desired isomorphism of graded rings
$$H_*(\LL N^{-TN};\integer) \cong HH^*(C^*(\tilde{N}) \# G, C^*(\tilde{N}) \# G).$$
\end{proof}

\section{Failure of Hochschild cohomology invariance under orbifold equivalence} \label{Morita}

Taking a closer look at Theorem \ref{main theorem}, we see that we can only relate the string
topology of an orbifold to the Hochschild cohomology ring of the group dg-ring,
whenever we can write the orbifold as the groupoid $[M/G]$ with $M$ simply connected and connected.
The reason for this restriction lies in the use of the Eilenberg-Moore spectral sequence to relate the complex
$$C^*(M) \stackrel{L}{\otimes}_{C^*(M)^e} C^*(M) \ \ \ \  \mbox{with} \ \ \ \ C^*(\LL M),$$ which in the case that $M$ is not simply connected, it does not converge (see \citep{Dwyer}).

But it is natural to ask to whether the Hochschild cohomology ring
$$HH^*(C^*(M) \#G, C^*(M) \# G)$$
independent of the presentation of the orbifold; in other words, for two Morita equivalent  orbifold groupoids
$[M/G]$ and $[N/H]$ in the sense of \citep{Moerdijk}, are the graded rings
$$HH^*(C^*(M) \#G, C^*(M) \# G) \ \ \ \ \mbox{and} \ \ \ \ HH^*(C^*(N) \#H, C^*(N) \# H)$$
isomorphic?

The following result tells us that in general the answer of the previous question is negative.

\begin{proposition}\label{non-invarians S^1}
Let $M=S^1$ be the circle equipped with the antipodal action of
$G=\integer/2$. Let us replace the cochains by differential forms
$\Omega M$ and let us work complex coefficients.
Consider the Morita equivalent groupoids $[M/G]$ and the topological quotient $M/G$,
together with the associated dg-rings
$\Omega M \# G$ and $\Omega (M/G)$.
Then $HH^*(\Omega
M \# G, \Omega M \# G)$ is not isomorphic to $HH^*(\Omega(M/G), \Omega (M/G))$ as rings.
\end{proposition}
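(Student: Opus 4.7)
The plan is to reduce each side to a Künneth decomposition via equivariant formality of $S^1$, and then distinguish the two rings by observing that the degree zero Hochschild cohomology of $\Omega(M/G)$ is an integral domain while that of $\Omega M \# G$ admits a nontrivial central idempotent.

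First I would invoke equivariant formality. Over $\mathbb{C}$, the space $\Lambda := \mathbb{C}\oplus \mathbb{C}\cdot d\theta$ of harmonic forms is a sub-dga of $\Omega M$ with zero differential, and Hodge theory gives a quasi-isomorphism $\Lambda \hookrightarrow \Omega M$. For $S^1$, harmonic forms happen to be closed under exterior product, so they genuinely form a subalgebra. Because $\sigma^*(d\theta)=d\theta$, the subalgebra $\Lambda$ is pointwise fixed, so this inclusion is $G$-equivariant with trivial $G$-action on $\Lambda$. Smashing with $G$ and invoking quasi-isomorphism invariance of Hochschild cohomology (Section \ref{section derived}) gives dga quasi-isomorphisms $\Omega M\#G \simeq \Lambda\#G = \Lambda\otimes_{\mathbb{C}}\mathbb{C}[G]$, the last equality since $G$ acts trivially on $\Lambda$. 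Similarly $M/G\cong S^1$ and $\Omega(M/G)\simeq\Lambda$ non-equivariantly.

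Next I would apply the Künneth formula for Hochschild cohomology, valid over a field when one factor is finite-dimensional, to obtain
$$HH^*(\Omega M\#G)\cong HH^*(\Lambda)\otimes HH^*(\mathbb{C}[G]),\qquad HH^*(\Omega(M/G))\cong HH^*(\Lambda).$$
Since $\mathbb{C}[\mathbb{Z}/2]\cong\mathbb{C}\oplus\mathbb{C}$ is commutative and semisimple, $HH^*(\mathbb{C}[\mathbb{Z}/2])$ is concentrated in degree zero and isomorphic as a ring to $\mathbb{C}\oplus\mathbb{C}$, which contains the nontrivial idempotent $(1,0)$.

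The remaining ingredient is $HH^0(\Lambda,\Lambda)$. A direct bar-complex calculation, tracking the total grading and the Hochschild level $k$, identifies the cocycles in total degree zero as the maps $f_k\colon \bar\Lambda^{\otimes k}\to \Lambda_0$ (sending $\alpha^{\otimes k}\mapsto 1$) for $k$ even; no coboundaries appear in total degree zero because $\Lambda$ is non-negatively graded. Under cup product $f_k\cup f_l=f_{k+l}$, so $HH^0(\Lambda,\Lambda)\cong \mathbb{C}[u]$ with $u=f_2$, a polynomial ring and hence an integral domain. Combining, $HH^0(\Omega(M/G))\cong\mathbb{C}[u]$ is an integral domain while $HH^0(\Omega M\#G)\cong\mathbb{C}[u]\oplus\mathbb{C}[u]$ contains the nontrivial idempotent $(1,0)$, so the two rings cannot be isomorphic. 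The only delicate point is the sign bookkeeping that identifies which $f_k$ survive in cohomology, which is a standard but finicky Koszul-style calculation.
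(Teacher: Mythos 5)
Your proof is correct and follows essentially the same route as the paper: the paper also reduces $\Omega M$ $G$-equivariantly to the formal model $\mathbb{C}[x]/x^2$ (your $\Lambda$) with trivial action, identifies $\Omega M\#G\simeq \mathbb{C}[x]/x^2\otimes\mathbb{C}G$, uses semisimplicity of $\mathbb{C}G$ to split off the group-algebra factor, and distinguishes $HH^*(\mathbb{C}[x]/x^2)\cong\mathbb{C}[y,z]/z^2$ from $\mathbb{C}[y,z]/z^2\otimes\mathbb{C}G$. The only cosmetic point is that to exclude ungraded ring isomorphisms you should note that your idempotent $(1,0)$ is nontrivial in the \emph{full} ring while $\mathbb{C}[y,z]/z^2$ has none (its quotient by the nilpotent ideal $(z)$ is the domain $\mathbb{C}[y]$), rather than comparing only the degree-zero pieces.
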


\begin{proof}  Let $\mathbb{C}[x]/x^2$ be the dg-algebra with trivial
differential and $\deg x = 1$. Consider the trivial $G$-action on
$\mathbb{C}[x]/x^2$.  Note that $H^*(\Omega M)$ is spanned by the
classes of $1$ and $d\Theta$. Since the elements $1$ and $d\Theta$
are $G$-invariant we get a $G$-equivariant quasi-isomorphism
\begin{equation}\label{q i omega m}
\mathbb{C}[x]/x^2 \to \Omega M, \; x \mapsto d\Theta
\end{equation}
Since $M/G$ is diffeomorphic to $M$ a straightforward computation
now gives
$$
HH^*(\Omega (M/G)) \cong HH^*(\Omega M) \cong
HH^*(\mathbb{C}[x]/x^2) \cong \mathbb{C}[y,z]/z^2
$$
where $\deg y = 0$ and $\deg z = 1$. On the other hand, the map in (\ref{q i
omega m}) gives a quasi-isomorphism
$$
\mathbb{C}[x]/x^2  \# G \cong \Omega M \# G.
$$
Since $\mathbb{C}[x]/x^2  \# G$ is the usual algebra tensor
product of $\mathbb{C}[x]/x^2$ and $\mathbb{C}G$ and since
$\mathbb{C} G$ is commutative and semi-simple this gives
$HH^*(\Omega M \# G) \cong$
$$
HH^*(\mathbb{C}[x]/x^2 \# G) = HH^*(\mathbb{C}[x]/x^2) \otimes
\mathbb{C} G \cong \mathbb{C}[y,z]/z^2 \otimes \mathbb{C}G.
$$
This proves the proposition since $\mathbb{C}[y,z]/z^2$ and
$\mathbb{C}[y,z]/z^2 \otimes \mathbb{C}G$ are non-isomorphic
rings.
\end{proof}
Since Hochschild cohomology commutes with extension of scalars and
$\Omega M$ is Morita equivalent to $C^* (M) \otimes_{\mathbb{Z}}
\mathbb{C}$, we conclude that $HH^*(C^* (M) \# G, C^*(M) \# G)$ and
$HH^*(C^*(M/G), C^*(M/G))$ are also non-isomorphic rings.

Because the Hochschild cohomology is invariant under derived equivalence (see \citep{Keller2}), we can conclude that
\begin{cor}
Let $M=S^1$ and endow it  with the antipodal action of $G=\integer/2$. Then the derived categories of dg-modules
$$\DD(C^*(M) \#G) \ \ \ \ \mbox{and} \ \ \ \ \DD(C^*(M/G))$$
(as in \citep{Schwede}) are not equivalent.
\end{cor}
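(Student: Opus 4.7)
The plan is to deduce this corollary directly from the preceding Proposition~\ref{non-invarians S^1} by invoking the derived invariance of Hochschild cohomology. The strategy is contrapositive: if the two derived categories of dg-modules were equivalent, then (by Keller's theorem \citep{Keller2}) the Hochschild cohomology rings of the two dg-rings would have to be isomorphic as graded rings, contradicting what was established in the proposition.

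Concretely, first I would recall that Proposition~\ref{non-invarians S^1} (combined with the remark immediately after its proof, extending from $\Omega M$ to $C^*(M)\otimes_\integer \complex$ via Morita equivalence, and then back to $C^*(M)$ by extension of scalars for Hochschild cohomology) yields
\[
HH^*(C^*(M) \# G, C^*(M) \# G) \not\cong HH^*(C^*(M/G), C^*(M/G))
\]
as graded rings. Second, I would invoke the theorem of Keller cited in \citep{Keller2}, which asserts that for two dg-rings $\Aa$ and $\BB$, an equivalence of triangulated categories $\DD(\Aa) \simeq \DD(\BB)$ (of the sort considered in \citep{Schwede}, arising from a tilting object or a quasi-functor) induces an isomorphism of graded rings
\[
HH^*(\Aa, \Aa) \cong HH^*(\BB, \BB).
\]
Applying this with $\Aa = C^*(M)\#G$ and $\BB = C^*(M/G)$ and comparing with the previous displayed non-isomorphism yields a contradiction, so the derived categories cannot be equivalent.

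There is essentially no serious obstacle in the argument itself: the content is all in Proposition~\ref{non-invarians S^1} and in the cited derived invariance result. The one place where some care is warranted is the transition from the dg-ring of differential forms $\Omega M$ (with complex coefficients) used in the proof of the proposition to the integral singular cochain dg-ring $C^*(M)$ appearing in the corollary; this is handled by the observation that Hochschild cohomology is compatible with extension of scalars from $\integer$ to $\complex$ and that $\Omega M$ and $C^*(M;\complex)$ are quasi-isomorphic (hence Morita equivalent) dg-rings, so their Hochschild cohomologies agree as graded rings. Once this identification is in place, the conclusion follows immediately by the two-line argument above.
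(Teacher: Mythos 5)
Your proposal is correct and follows exactly the paper's own route: Proposition \ref{non-invarians S^1} plus the extension-of-scalars/Morita remark gives the non-isomorphism of the Hochschild cohomology rings of $C^*(M)\#G$ and $C^*(M/G)$, and Keller's derived invariance of Hochschild cohomology then rules out an equivalence of the derived categories. No further comment is needed.
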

(Notice that by Proposition \ref{non-invarians S^1} these
categories are non-equivalent also with coefficients in the field
of complex numbers.) Therefore one cannot expect that there is an
isomorphism of Hochschild cohomology rings for the dg-rings of
Morita equivalent groupoids, and in some sense, one can only
produce an isomorphism with the string topology ring whenever one
can find a description of the orbifold given by the quotient of a
simply connected manifold by the action of a finite group. When
this is not the case, as for example the case of manifolds with
non-finite fundamental group, we do not know how to recover the
string topology ring via the Hochschild cohomology of some dg-ring
associated to the orbifold. We leave this question open.

\section{Appendix A} \label{section derived}

\subsection{Derived category of dg-modules over a dg-ring}

In this section we will give the preliminaries on differential
graded modules over differential graded
 rings and we will setup the notation. We give a rather detailed exposition partly because
we have felt there was a need in the literature for an elementary
introduction to Hochschild (co)homology in the dg-setting
 that relates the derived category of dg-modules and the derived functor approach
 to the down to earth formulas used by the working topologists and algebraists.

 This summary is based on the papers \citep{Schwede, Keller} and the book
 \citep{BernsteinLunts}.
 In what follows all complexes will be cohomological i.e. the
 differentials will raise the degree by one.

 A {\bf differential graded ring} (dg-ring) is a pair $\Aa=(A,d)$ consisting of a  $\integer$-graded ring $A$
 together with a differential $d$ of degree $1$ which satisfies the
 Leibniz rules
 $$d(ab) = d(a)b + (-1)^{|a|}a d(b)$$
 for all homogeneous elements $a,b \in A$. In this paper we will
 assume that the dg-rings come endowed with a unit element. $A$ is called (graded) commutative if $ab =
 (-1)^{|a||b|}ba$.
 For the rest of this section $\Aa$ denotes a dg-ring.

 A {\bf differential graded left $\Aa$-module} (left $\Aa$-module) consists of a graded
 left $A$-module $M$ together with a
 differential $d_M$ of degree $1$ which satisfies the Leibniz rule
$$d_M(ab) = d(a)b + (-1)^{|a|}a  d_M(b)$$
 for all homogenous elements $a \in A$ and $b \in M$. A morphism
 $f:M \to N$ of $\Aa$-modules is an $A$-linear map which is homogenous of degree
  $0$ and commutes with the differentials.

Denote by $\Aa-\mathrm{mod}$ the category of left $\Aa$-modules.
We denote homomorphisms in this category by
$$ \Hom_{\Aa}(\_ \ ,\_ \ ) : = \Hom_{\Aa-{\mathrm{mod}}}(\_ \ ,\_\ ).$$
The category of graded left $A$-modules, is denoted by
$A-\mathrm{mod}$. Morphisms in this category are by definition
$A$-linear maps homogenous of degree $0$. We put
$$\Hom_A (\_ \ ,\_ \ ) : = \Hom_{A-{\mathrm{mod}}}(\_ \ ,\_\ ).$$
Thus
$$\Hom_\Aa(M,N) = \{f \in \Hom_A(M,N) | d_N \circ f - f \circ d_M
=0 \}.$$ Similarly, there are the categories ${\mathrm{mod}}-\Aa$
of right $\Aa$-modules and $\Aa-{\mathrm{mod}}-\Aa$ of
$\Aa$-bimodules.

The {\bf opposite} dg-ring $\Aa^o$ of $\Aa$ is defined to be
$\Aa^o=(A^o,d)$ where its elements are the same ones as in $\Aa$
and with the same differential but the multiplication $a \circ b$
is the opposite of the one in $A$, i.e.
$$a \circ b := (-1)^{|a||b|} ba.$$
Note that if $\Aa'$ is any dg-ring then $\Aa \otimes_\integer
\Aa'$ is a dg-ring with multiplication $a \otimes a'  \cdot b
\otimes b' = (-1)^{|a'||b|}ab \otimes a'b'$ and differential $d(a
\otimes a') = da \otimes a' +(-1)^{|a'|}a \otimes da'$, for $a,b
\in \Aa$ and $a',b' \in \Aa'$.

The {\bf shift functor} $[1]$ on $\Aa-{\mathrm{mod}}$ is given by
shifting the degree of a complex by one
$$M[1]^k= M^{1+k}$$
and with differential $d_{M[1]} = - d_M$. We therefore have a
canonical isomorphism of degree 1
\begin{eqnarray} \label{shift functor}
s : M \to M[1] \ \ \ \mbox{such that} \ \ \ \ \ \ \ s d_M x = -
d_{M[1]} sx.
\end{eqnarray}

The {\bf tensor product} of a right $\Aa$-module $M$ and a left
$\Aa$-module $N$ consists of the graded complex of abelian groups
$M \otimes_A N$ together with the differential
$$d(m \otimes n) = d_Mm \otimes n + (-1)^{|m|}m \otimes d_N n.$$
If $A$ is graded commutative then $M$ is automatically an
$\Aa$-bimodule by $amb = (-1)^{|m||b|}mab$, for $a,b \in A$ and $m
\in M$. Hence, in this case $M \otimes_{\Aa}N$ is a left
$\Aa$-module by $a\cdot m \otimes n = (am) \otimes n$.

A homomorphism $f : M \to N$ in $\Aa-{\mathrm{mod}}$ is a {\bf
quasi-isomorphism} if $f$ induces an isomorphism in cohomology
$H^*f : H^*M \cong H^*N$ .

A {\bf chain homotopy} between homomorphisms $f_0,f_1 \in
\Hom_\Aa(M,N)$ of $\Aa$-modules is a homomorphism of graded
$A$-modules  $k \in \Hom_A(M,N[-1])$ of degree $-1$ which
satisfies
$$f_1-f_0 = d_M \circ k + k \circ d_N.$$

The {\bf homotopy category} $\KK(\Aa)$ of the dg-ring $\Aa$ has
the same objects as $\Aa-{\mathrm{mod}}$ and as morphisms the
chain homotopy equivalence classes of morphisms in
$\Aa-{\mathrm{mod}}$.

The {\bf derived category} $\DD(\Aa)$ is the localization of the
homotopy category $\KK(\Aa)$ with respect to quasi-isomorphisms.

In more sophisticated presentations than ours one define a
structure of Quillen model category on $\Aa-\mathrm{mod}$, see
\citep{Hovey, Quillen-Book}. Although this topic is not discussed
here, we use below the model category term "cofibrant" for objects
which essentially do the same work in the dg-world (where objects,
e.g., the bar construction, automatically behave like unbounded
complexes) as bounded above projective resolutions do in classical
homological algebra.

A (left $\Aa$-)module $M$ is {\bf cofibrant with respect to $\Aa$} if there exists an
exhaustive increasing filtration by submodules
$$0=M^0 \subset M^1 \subset M^2 \subset \cdots \subset M^k \subset \cdots \subset M$$
such that each subquotient $M^k/M^{k-1}$ is a direct summand of a
direct sum of shifted copies of $\Aa$. An $\Aa$-module $M$  is  {\bf cofibrant} if
 it is cofibrant with respect to both $\Aa$ and the base ring $\integer$. In the
 case that $\Aa$ were free over $\integer$, the cofibrant modules are the same as
  the cofibrant modules with respect to $\Aa$.

Up to chain homotopy
equivalence, the cofibrant modules are the ones that possesses
Keller's property (P) \citep{Keller}. Every quasi-isomorphism
between cofibrant modules is a chain homotopy equivalence and
every module can be approximated up to quasi-isomorphism by a
cofibrant module.

The derived category $\DD(\Aa)$ is equivalent to the full
subcategory of $\KK(\Aa)$ whose objects are cofibrant
$\Aa$-modules, see \citep{Keller}.

If $f : \Aa \to \BB$ is a quasi-isomorphism of dg-rings, then the
derived functors of restriction and extension of scalars induce
equivalences between the derived categories $\DD(\Aa)$ and
$\DD(\BB)$.

Let $M$ and $N$ be left $\Aa$-modules, the {\bf homomorphism
complex}
$$\HHom_\Aa(M,N)$$
between $M$ and $N$ is the complex defined as follows:
 in dimension $k \in \integer$ the chain group $\HHom_\Aa(M,N)^k$ is the group
 of graded $A$-modules homomorphisms of degree $k$, i.e.
 $$\HHom_\Aa(M,N)^k : = \Hom_A(M,N[k]),$$
and the differential
$$d: \HHom_\Aa(M,N)^k \to \HHom_\Aa(M,N)^{k+1}$$
is defined by
$$d(f):=d_N \circ f - (-1)^k f \circ d_M.$$

With this definition in mind, the 0-cycles of the homomorphism
complex $\HHom_\Aa(M,N)$ are precisely the $\Aa$-module
homomorphisms between $M$ and $N$:
$$\ZZ^0 \HHom_\Aa(M,N) = \Hom_\Aa(M,N)$$
and the zero-th cohomology of the complex $\HHom_\Aa(M,N)$ is
precisely the set of equivalence classes of chain homotopic maps:
$$H^0 \HHom_\Aa(M,N) = \Hom_{\KK(\Aa)}(M,N).$$

For $M,N$ and $L$ left $\Aa$-modules composition of maps induces a
bilinear pairing
\begin{eqnarray} \label{dg-ring structure Hom_A(M,M)} \HHom_\Aa(N,L)^k \times \HHom_\Aa(M,N)^l \to \HHom_\Aa(M,L)^{k+l}
\end{eqnarray}
that moreover satisfies the Leibniz rule: for $f \in
\HHom_\Aa(M,N)^l$ and $g \in \HHom_\Aa(N,L)^k$ one can check that
$$d(g \circ f) = dg \circ f + (-1)^k g \circ df.$$

From this it follows in particular that the endomorphism complex
$\HHom_\Aa(M,M)$ is a differential graded ring with multiplication
the composition of homomorphisms.

\subsection{Derived functors}

For $M$ a left $\Aa$-module and $N$ a right $\Aa$-module, the
derived tensor $\stackrel{L}{\otimes}_\Aa$ between $N$ and $M$  is
defined as the complex
$$N \stackrel{L}{\otimes}_\Aa M := N \otimes_\Aa M'$$
where $M' \to M$ is a cofibrant replacement for $M$. The derived
tensor product defines a functor $\stackrel{L}{\otimes}_\Aa:
\,\DD(\Aa^o) \times \DD(\Aa) \To \DD(\integer)$.

The $\Tor$-groups between $N$ and $M$ are defined as the
cohomology of the derived tensor between $N$ and $M$,
$$\Tor^*_\Aa(N,M) = H^* (N \stackrel{L}{\otimes}_\Aa M).$$

Let $M$ and $N$ be two $\Aa$-modules, the {derived functor} of
$\HHom_\Aa$ is  the functor $\RHHom_\Aa$ which is defined as the
complex
$$\RHHom_\Aa(M,N) := \HHom_\Aa(M',N)$$
where $M' \to M$ is a cofibrant replacement of $M$. We see that
$\RHHom$ is well defined up to chain homotopies and it defines a
functor $\RHHom_\Aa(\;,\;): \,\DD(\Aa) \times \DD(\Aa) \To
\DD(\integer)$.

By definition we have
$$\Ext^k_\Aa(M,N) := \Hom_{\DD(\Aa)}(M,N[k])=H^k \RHHom_\Aa(M,N)=$$
$$H^k \HHom_\Aa(M',N).$$

We have seen that the endomorphism complex
$$\HHom_\Aa(M',M')$$ becomes a differential graded ring by composition of homomorphisms. This
in particular implies that the graded $\Ext$-group
\begin{equation*}
\Ext^*_\Aa(M,M)=H^* \HHom_\Aa(M',M')
\end{equation*}
becomes a graded ring.

\subsection{Hochschild (co)homology for dg-rings}

In this section we define the Hochschild homology and cohomology
for a dg-ring $\Aa$ and we will list some of its properties.

Consider the dg-ring $$\Aa^e:= \Aa \otimes_\integer \Aa^o.$$
Then an $\Aa$-bimodule is the same thing as a left $\Aa^e$-module.
Let us consider $\Aa$ as a left $\Aa^e$-module in the natural way.

In order to define the Hochschild (co)homology we need to define a
 replacement of $\Aa$ in $\Aa^e-\mathrm{mod}$
 that it is known as the Bar construction.

\subsubsection{Bar construction} \label{section bar}

The Bar construction is based on the Bar resolution for modules
over rings. We will use the sign conventions defined in  \citep{FelixThomas, FelixHalperinThomas}
and in section \ref{Appendix B} we will show how these sign conventions arise.

 For $k \geq 0$ let $$P^{-k} := \left(\Aa \otimes
\Aa[1]^{\otimes k} \otimes \Aa \right) = \Aa^{\otimes k+2}[k]$$ be the $\Aa^e$-module defined in the
natural way by
$$(a \otimes b) (x_0|x_1| \dots |x_{k+1}) = (ax_0|x_1| \dots
|x_{k+1}b)$$
where $a \otimes b \in \Aa^e$, and $(x_0|x_1| \dots |x_{k+1})$ denotes the element
$$x_0 \otimes sx_1 \otimes \cdots \otimes sx_n \otimes x_{j+1}$$
in $P^{-k}$ where $sx$ denotes the image in $\Aa[1]$ of $x \in \Aa$ under the isomorphism
$s: \Aa \to \Aa[1]$ induced by the shift functor; see (\ref{shift functor}).

We have that the degree of an element in $P^{-k}$ is
\begin{eqnarray*}
|(a_0| \dots |a_{k+1})|:= |a_0 | + \dots + |a_{k+1}| - k.
\end{eqnarray*}
and the differential in $P^{-k}$ becomes
\begin{eqnarray} \nonumber
d(x_0|x_1| \dots |x_{k+1}) & :=  & (dx_0|x_1|\dots |x_{k+1}) - \sum^{k}_{j=1}(-1)^{\varepsilon_{j-1} }(x_0|\dots |dx_j|\dots |x_{k+1})\\
\label {internal differential Aa^k}& & \ \ \ \ \ \ \ \ \ \ \ \ \ \   \ \ \ \ \ \ \ \ \ \ \ \ \ \ \ \ \  + (-1)^{\varepsilon_{k}}(x_0|\dots|x_k|dx_{k+1})
\end{eqnarray}
where
\begin{eqnarray*}
\varepsilon_j:= |x_0 | + |x_1| + \cdots +|x_j| -j
\end{eqnarray*}
denotes the degree of the first $j+1$ elements of $(a_0| \dots | a_{k+1})$ as an element in $\Aa \otimes \Aa[1]^k \otimes \Aa$.

Define the homomorphisms
of $\Aa^e$-modules
\begin{eqnarray*} \label{differential delta}
\delta^{-k} : P^{-k} & \to  & P^{-k+1}\\
(x_0|x_1| \dots |x_{k+1}) & \mapsto & \sum_{j=0}^{k-1} (-1)^{\varepsilon_j} (x_0|
\dots |x_{j-1}|x_jx_{j+1}|x_{j+2}| \dots  |x_{k+1}) \\
& & \ \ \ \ \ \ \ \ \ \ \ \ \ \ \ - (-1)^{\varepsilon_k} (x_0|
\dots |x_{k-1}|x_{k}x_{k+1})
\end{eqnarray*}
which together with the module homomorphism
\begin{eqnarray*}\label{epsilon def}
\epsilon: P^{0} & \to  & \Aa\\
(x_0|x_1) & \mapsto & x_0x_1
\end{eqnarray*}
determines a complex of $\Aa^e$-modules over
$$ \cdots P^{-3} \stackrel{\delta^{-3}}{\To} P^{-2} \stackrel{\delta^{-2}}{\To}
 P^{-1} \stackrel{\delta^{-1}}{\To} P^0 \stackrel{\epsilon}{\To} \Aa \To 0$$
which turns out to be acyclic if we consider it as a complex of
modules over $A^e$.

The bar construction is the $\Aa^e$-module
$$B(\Aa) := \bigoplus_{k=0}^{\infty} P^{-k} = \bigoplus_{k=0}^{\infty} \left(\Aa \otimes \Aa[1]^{\otimes k} \otimes \Aa \right)$$
with differential \begin{eqnarray*} \nonumber D : P^{-k}  & \to &
P^{-k}
\oplus P^{-k+1}\\
D(p) & = & d_{P^{-k}} p +  \delta_{-k}p
\end{eqnarray*}
that we will simply denote by $D=d + \delta$.

It is straightforward to check that the differentials $d$ and $\delta$ commute as operators, i.e. $[d,\delta] = d\delta - (-1)^{|d||\delta|} \delta d=0$, and therefore we have that $D^2=0$.

 The sign conventions for the differentials $d$ and $\delta$ are obtained by transporting the structure of the usual differentials $d$ and the Bar differential $\delta$ from $\oplus_k \Aa^{k+2}$ to $\oplus_k \left(\Aa \otimes \Aa[1]^k \otimes \Aa\right)$.
See section \ref{Appendix B} for a proof of this fact.

We extend the map $\epsilon$ of (\ref{epsilon def}) to a morphism
with $\epsilon: B(\Aa) \to \Aa$ (the augmentation morphism) by
requiring $\epsilon|_{P^{-k}}=0$ for $k
>0$.

\begin{lemma} \label{lemma bar is cofibrant}  Assume that the dg-ring $\Aa$ is free over
$\integer$. Then $B(\Aa) \stackrel{\epsilon}{\to} \Aa$ is a
cofibrant replacement of $\Aa$.
\end{lemma}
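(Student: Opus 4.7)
There are two things to verify: that $\epsilon$ is a quasi-isomorphism, and that $B(\Aa)$ is cofibrant as an $\Aa^e$-module. For the quasi-isomorphism I would use the classical extra-degeneracy of the bar construction; for cofibrancy I would use the filtration of $B(\Aa)$ by bar-degree, suitably refined to absorb the internal differential on the middle tensor factors.

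\textbf{Quasi-isomorphism.} The plan is to exhibit an explicit right $\Aa$-linear contracting homotopy $s\colon B(\Aa)\oplus \Aa \to B(\Aa)$ of degree $-1$ defined by
$$s(x_0|x_1|\cdots|x_{k+1}) \;=\; \pm\,(1|x_0|x_1|\cdots|x_{k+1}), \qquad s(a) \;=\; (1|a)\in P^0,$$
with signs dictated by the conventions of Appendix~\ref{Appendix B}. A routine Leibniz-rule computation on the two pieces $d$ and $\delta$ of $D$ yields $sD + Ds = \mathrm{id}_{B(\Aa)} - \iota\epsilon$, where $\iota\colon \Aa \to P^0$ is $a\mapsto (1|a)$. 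Thus $\epsilon$ is a chain homotopy equivalence of complexes of right $\Aa$-modules, and in particular a quasi-isomorphism.

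\textbf{Bar-degree filtration.} I would then filter $B(\Aa)$ by bar-degree,
$$F^n \;:=\; \bigoplus_{k=0}^{n} P^{-k}.$$
Since $d$ preserves each $P^{-k}$ and $\delta$ sends $P^{-k}$ into $P^{-k+1}$, each $F^n$ is a sub-dg-$\Aa^e$-module, the filtration is exhaustive, and the subquotient is
$$F^n/F^{n-1} \;\cong\; P^{-n} \;=\; \Aa^e \otimes_\integer \Aa[1]^{\otimes n}$$
as dg-$\Aa^e$-modules, carrying only the internal differential (the $\delta$-component dies in the quotient). This exhibits $P^{-n}$ as the $\Aa^e$-module induced from the $\integer$-complex $\Aa[1]^{\otimes n}$, which is free in each degree by hypothesis on $\Aa$.

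\textbf{Refinement and main obstacle.} The residual internal differential on $\Aa[1]^{\otimes n}$ prevents $P^{-n}$ from being literally a direct sum of shifted copies of $\Aa^e$, so I would refine each level $F^n/F^{n-1}$ using a filtration of $\Aa[1]^{\otimes n}$ by $\integer$-subcomplexes whose subquotients are direct sums of shifted copies of $\integer$ with zero differential. Such a filtration exists over the PID $\integer$ (in the bounded-below setting relevant to the applications $\Aa=C^*(M)$, e.g.\ via canonical truncation); applying the exact induction functor $\Aa^e \otimes_\integer (-)$ and composing with the bar filtration produces an exhaustive increasing filtration of $B(\Aa)$ by sub-dg-$\Aa^e$-modules whose subquotients are direct sums of shifted copies of $\Aa^e$, establishing cofibrancy over $\Aa^e$. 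The remark preceding the lemma that $\integer$-cofibrancy is automatic when $\Aa$ is $\integer$-free then closes the argument. The genuinely technical step is this last refinement, and it is precisely here that the hypothesis that $\Aa$ is free over $\integer$ is used in an essential way.
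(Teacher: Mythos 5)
Your proposal is correct and is essentially the approach the paper itself uses: the paper states this lemma without proof, but the cofibrancy argument you describe — the bar-degree filtration refined by further filtering the middle tensor factor so that the residual internal differential disappears on subquotients — is exactly the filtration $F^{2q}\subset F^{2q+1}$ with $W^p=\ker\bigl(d:\Aa^{p}\to\Aa^{p+1}\bigr)$ used in section \ref{section HH for dg-rings} to prove the analogous statement for $B(\Aa)\otimes_\integer\lineB(\integer G)$. One small correction: canonical truncation is the wrong device for your refinement step, since its subquotients are cohomology groups, which may have torsion; the refinement that actually works (and the one the paper uses) is the two-step filtration $0\subset\ker d\subset \Aa[1]^{\otimes n}$, whose sub and quotient are free $\integer$-modules with vanishing induced differential because $d$ lands in $\ker d$. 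With that substitution your argument is complete, and no bounded-below hypothesis is needed.
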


\subsubsection{Definition of Hochschild (co)homology}
With the bar construction at hand we can define the Hochschild
(co)homology groups.
\begin{definition}
The Hochschild cohomology ring of a dg-ring $\Aa$ is the ring
$$HH^*(\Aa,\Aa) := H^* \HHom_{\Aa^e}(B(\Aa),B(\Aa))$$
where the ring structure is given by composition of maps,
and the Hochschild homology groups is given by the graded group
$$HH_*(\Aa,\Aa):= H^* (B(\Aa) \otimes_{\Aa^e} \Aa).$$

In the case that $\Aa$ is free over $\integer$, we know from lemma \ref{lemma bar is cofibrant}
that $B(\Aa)$ is a cofibrant replacement for $\Aa$ in $\Aa^e$-mod, and therefore we could alternatively define
the Hochschild cohomology as the graded ring
$$HH^*(\Aa,\Aa):= \Ext^*_{\Aa^e}(\Aa,\Aa).$$
and the Hochschild homology
as the graded group
$$HH_*(\Aa,\Aa) := \Tor^*_{\Aa^e}(\Aa,\Aa).$$
\end{definition}

\subsubsection{Properties of the Hochschild cohomology}

 The Hochschild cohomology can be also calculated using the complex
$$\HHom_{\Aa^e}(B(\Aa),\Aa).$$
This complex is better suited for the  topological constructions that are done
in the rest of the paper in order to relate the homology of the free loops on a
 manifold and the Hochschild cohomology of the dg-ring of cochains.

We define the product $\phi \cdot \psi$ of  $\phi, \psi \in
\HHom_{\Aa^e}(B(\Aa), \Aa)$ by
\begin{eqnarray*} \label{product Hom(BA,A)}
(\phi \cdot \psi)(a_0| \dots | a_{k+1}) & = & \sum_{j=0}^k
(-1)^{|\psi|\varepsilon_j}\phi(a_0| \dots |a_j|1) \psi
(1|a_{j+1}|\dots |a_{k+1}).
\end{eqnarray*}

We shall show that this ring structure makes $\HHom_{\Aa^e}(B(\Aa),
\Aa)$ into a dg-ring,  which is moreover quasi-isomorphic to the dg-ring
$\HHom_{\Aa^e}(B(\Aa), B(\Aa))$ - where the dg-ring structure on
the latter complex is composition of maps; see (\ref{dg-ring
structure Hom_A(M,M)}).

\begin{proposition} $\HHom_{\Aa^e}(B(\Aa),
\Aa)$ is a dg-ring. \label{lemma Leibniz rule Hom(BA,A)}
\end{proposition}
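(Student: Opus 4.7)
The plan is to realize the product as coming from a coassociative diagonal on $B(\Aa)$ combined with the multiplication $\mu_\Aa: \Aa \otimes_\Aa \Aa \to \Aa$, which immediately gives associativity and the Leibniz rule. First I would introduce the diagonal
\[
\Delta : B(\Aa) \to B(\Aa) \otimes_\Aa B(\Aa), \quad (a_0|\dots|a_{k+1}) \mapsto \sum_{j=0}^k (a_0|\dots|a_j|1) \otimes_\Aa (1|a_{j+1}|\dots|a_{k+1})
\]
(this is precisely the map already appearing in (\ref{diagonal map bar}) in the paper), and verify directly that $\Delta$ is $\Aa^e$-linear and a chain map of degree $0$. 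Then for $\phi,\psi \in \HHom_{\Aa^e}(B(\Aa),\Aa)$ I would set
\[
\phi \cdot \psi := \mu_\Aa \circ (\phi \otimes \psi) \circ \Delta,
\]
where the symbol $\phi \otimes \psi$ is interpreted with the Koszul sign rule, so that applying it to $(a_0|\dots|a_j|1) \otimes_\Aa (1|a_{j+1}|\dots|a_{k+1})$ yields the sign $(-1)^{|\psi|\varepsilon_j}$ stated in the formula. Unwinding this composition then reproduces the explicit formula verbatim.

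Next I would check that $\phi \cdot \psi$ indeed lies in $\HHom_{\Aa^e}(B(\Aa),\Aa)$, i.e.\ that it is $\Aa^e$-linear of the expected degree $|\phi|+|\psi|$. The left $\Aa$-action goes into the factor $(a_0|\dots|a_j|1)$ on which $\phi$ acts, and the right $\Aa$-action lands in $(1|a_{j+1}|\dots|a_{k+1})$ on which $\psi$ acts; since the diagonal entries $\{1\}$ in the middle are absorbed by the tensor over $\Aa$, $\Aa^e$-linearity of $\phi$ and $\psi$ transports to $\phi \cdot \psi$. Associativity is then a formal consequence of the coassociativity of $\Delta$ (which is a routine reindexing of the defining sum) together with associativity of $\mu_\Aa$.

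The Leibniz rule $d(\phi \cdot \psi) = d\phi \cdot \psi + (-1)^{|\phi|} \phi \cdot d\psi$ follows from the fact that $\Delta$ commutes with the total differential $D = d + \delta$ on $B(\Aa)$ and that $\mu_\Aa$ commutes with $d_\Aa$; the differential on $\HHom_{\Aa^e}(B(\Aa),\Aa)$ is $d(f) = d_\Aa \circ f - (-1)^{|f|} f \circ D$, and distributing this across the composition $\mu_\Aa \circ (\phi \otimes \psi) \circ \Delta$ and applying $[\Delta, D]=0$ and $[\mu_\Aa, d_\Aa]=0$ produces exactly the two terms $d\phi \cdot \psi$ and $(-1)^{|\phi|}\phi \cdot d\psi$.

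The only real obstacle is the bookkeeping of Koszul signs: one must check that the sign $(-1)^{|\psi|\varepsilon_j}$ produced by shuffling $\psi$ past $(a_0|\dots|a_j|1)$ is consistent with the sign convention for $P^{-k} = \Aa \otimes \Aa[1]^{\otimes k} \otimes \Aa$ and with the differential $D=d+\delta$ described in (\ref{internal differential Aa^k}); this is where Appendix B (\S\ref{Appendix B}) can be invoked. Once these signs are pinned down, all identities are automatic from the coalgebra-algebra convolution picture.
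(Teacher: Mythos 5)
Your proposal is correct and follows essentially the same route as the paper: the paper also realizes the product as $\Delta^*(\phi\otimes\psi)$ for the diagonal $\Delta: B(\Aa)\to B(\Aa)\otimes_\Aa B(\Aa)$ with the Koszul sign convention on $\phi\otimes\psi$, and deduces the Leibniz rule from $\Delta$ being a chain map of $\Aa^e$-modules. The only cosmetic difference is that the paper dismisses associativity and unitality as immediate from the explicit formula, whereas you derive associativity from coassociativity of $\Delta$ — which is the same content.
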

\begin{proof}
The product is clearly associative, and moreover it defines the
structure of a unitary ring on $\HHom_{\Aa^e}(B(\Aa), \Aa)$; the
unit element of this ring is the augmentation map $\epsilon$.

The proof of the Leibniz rule is a calculation based on the fact that the ring
structure can also be obtained from the  diagonal map
$\Delta : B(\Aa) \to B(\Aa) \otimes_\Aa B(\Aa)$ of $\Aa^e$-modules  via  pullback.

The diagonal map $\Delta : B(\Aa)  \to  B(\Aa) \otimes_\Aa
B(\Aa)$ is defined by
\begin{eqnarray} \label{diagonal map bar}
\Delta(a_0| \dots | a_{k+1}) & = & \sum_{j=0}^k (a_0| \dots
|a_j|1) \otimes (1|a_{j+1}|\dots |a_{k+1})
\end{eqnarray}

For $\phi, \psi \in \HHom_{\Aa^e}(B(\Aa),\Aa)$ we see that
$$\phi \cdot \psi =  \phi \otimes \psi \circ \Delta = \Delta^*(\phi \otimes \psi),$$
where $\phi \otimes \psi: B(\Aa)\otimes_\Aa  B(\Aa) \to \Aa \otimes_\Aa \Aa=\Aa$ is
given by $a \otimes a' \mapsto (-1)^{|a||\psi|}\phi(a) \psi(a')$.
\end{proof}

Let us state some facts whose proofs are not difficult.

\begin{lemma} \label{cofibreplacementlemma} Assume that  $\PP \stackrel{\mu}{\to} \Aa$ is a cofibrant replacement of
$\Aa$ in $\Aa^e-\mathrm{mod}$. Then  $\PP \otimes_\Aa \PP
\stackrel{\mu \otimes \mu }{\to} \Aa \otimes_\Aa \Aa = \Aa$ is
also a cofibrant replacement of $\Aa$ in $\Aa^e-\mathrm{mod}$.
\end{lemma}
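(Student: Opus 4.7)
The plan is to verify the two requirements of a cofibrant replacement separately: that $\mu \otimes \mu$ is a quasi-isomorphism, and that $\PP \otimes_\Aa \PP$ is cofibrant as an $\Aa^e$-module.

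For the quasi-isomorphism, I would factor
$$\PP \otimes_\Aa \PP \ \stackrel{\mu \otimes 1}{\longrightarrow}\ \Aa \otimes_\Aa \PP \ =\ \PP \ \stackrel{\mu}{\longrightarrow}\ \Aa,$$
and argue each factor is a quasi-isomorphism. The second is so by hypothesis. For the first, I would observe that the filtration exhibiting $\PP$ as cofibrant over $\Aa^e$ is, in particular, a filtration by right $\Aa$-submodules whose subquotients are summands of direct sums of shifted copies of $\Aa^e$, and $\Aa^e = \Aa \otimes_\integer \Aa^o$ is free as a right $\Aa$-module via its $\Aa^o$-factor. Hence $\PP$ is cofibrant as a right $\Aa$-module, so the functor $-\otimes_\Aa \PP$ preserves quasi-isomorphisms of left $\Aa$-modules; applied to the quasi-isomorphism $\mu$, this shows $\mu \otimes 1$ is a quasi-isomorphism.

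For cofibrancy, I would use the induced double filtration $G^n := \sum_{i+j \leq n} \mathrm{image}(\PP^i \otimes_\Aa \PP^j)$ on $\PP \otimes_\Aa \PP$, where $\PP^\bullet$ is the cofibrant filtration of $\PP$. Its associated subquotients break up into finite direct sums of pieces of the form $(\PP^i/\PP^{i-1}) \otimes_\Aa (\PP^j/\PP^{j-1})$, each of which is a summand of a direct sum of shifted copies of $\Aa^e \otimes_\Aa \Aa^e$. A careful bookkeeping of the various $\Aa$-actions (the tensor over $\Aa$ pairs the inner $\Aa^o$-factor of the first $\Aa^e$ with the inner $\Aa$-factor of the second, leaving the outer $\Aa$ and $\Aa^o$ to furnish the bimodule structure) identifies $\Aa^e \otimes_\Aa \Aa^e$ with $\Aa^e \otimes_\integer \Aa$ as $\Aa^e$-modules, with the middle $\Aa$-factor inert under $\Aa^e$. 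Under the standing freeness of $\Aa$ over $\integer$, this is a direct sum of shifted copies of $\Aa^e$, so the filtration $G^\bullet$ exhibits the required cofibrant structure on $\PP \otimes_\Aa \PP$.

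The main obstacle, and the place where errors most easily creep in, is the identification of the module structure on $\Aa^e \otimes_\Aa \Aa^e$: one must carefully separate the right $\Aa$-action on the first factor and the left $\Aa$-action on the second (which are contracted by the tensor over $\Aa$) from the residual outer $\Aa^e$-bimodule action that equips the result with its $\Aa^e$-module structure, and verify that both the $\Aa^e$-cofibrancy and the auxiliary $\integer$-cofibrancy (needed for the full notion of cofibrant in the paper's sense) are inherited by each subquotient.
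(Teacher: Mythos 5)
The paper states this lemma with no proof at all (it appears under the heading ``Let us state some facts whose proofs are not difficult''), so there is no argument of the authors' to compare against; your proposal supplies the standard proof and it is essentially correct. Both halves work: the factorization $\mu\otimes\mu = \mu\circ(\mu\otimes 1)$ reduces the quasi-isomorphism claim to K-flatness of $\PP$ over $\Aa$, which follows from restricting the $\Aa^e$-cofibrant filtration; and the convolution filtration with subquotients built from $\Aa^e\otimes_\Aa\Aa^e\cong\Aa^e\otimes_\integer\Aa$ (middle factor inert) is exactly the right bookkeeping for cofibrancy. Two small points to tighten. First, there is a variance slip: to conclude that $\mu\otimes 1\colon \PP\otimes_\Aa\PP\to\Aa\otimes_\Aa\PP$ is a quasi-isomorphism you need the \emph{fixed second} factor to be K-flat as a \emph{left} $\Aa$-module (the functor $-\otimes_\Aa\PP$ eats right $\Aa$-modules), whereas you establish cofibrancy of $\PP$ as a right $\Aa$-module; either run the same argument through the $\Aa$-factor of $\Aa^e$ to get left-cofibrancy, or factor through $1\otimes\mu$ instead. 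Second, both halves of your argument (that $\Aa^e$ is graded-free on either side over $\Aa$, and that $\Aa^e\otimes_\integer\Aa$ is a sum of shifted copies of $\Aa^e$) use that $\Aa$ is free, or at least graded-projective, over $\integer$; this is not in the statement of the lemma but is the paper's standing hypothesis in this appendix (cf.\ Lemma \ref{lemma bar is cofibrant} and Appendix C, which handles the non-free case separately), so it is worth flagging explicitly rather than leaving implicit.
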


\begin{lemma}
The map $\Delta : B(\Aa) \to B(\Aa) \otimes_\Aa B(\Aa) $ is a map of $\Aa^e$-modules.
\end{lemma}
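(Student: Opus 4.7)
The plan is to verify the three conditions entailed by the paper's definition of a morphism in $\Aa^e\text{-mod}$: that $\Delta$ is (i) $\Aa^e$-linear, (ii) homogeneous of degree $0$, and (iii) a chain map with respect to the total differential $D=d+\delta$ on each side. Throughout I will use that the $\Aa^e$-module structure on $B(\Aa)\otimes_\Aa B(\Aa)$ is the natural one: the left $\Aa$-action acts on the $x_0$-slot of the first tensor factor, the right $\Aa$-action on the last slot of the second tensor factor, while the intermediate $\Aa$ is identified via $\otimes_\Aa$.

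For (i) and (ii), I would check by direct inspection on a generator $(x_0|x_1|\dots|x_{k+1})\in P^{-k}$. The diagonal only inserts ones at the end/beginning of the tensor factors and splits at position $j$, never touching $x_0$ or $x_{k+1}$; so for $a\otimes b\in\Aa^e$ the identity
\[
\Delta\!\bigl((a\otimes b)(x_0|\dots|x_{k+1})\bigr)
=(a\otimes b)\,\Delta(x_0|\dots|x_{k+1})
\]
follows term by term. For the degree, each summand $(x_0|\dots|x_j|1)\otimes(1|x_{j+1}|\dots|x_{k+1})$ has total degree
\[
\bigl(\textstyle\sum_{i=0}^{j}|x_i|-j\bigr)+\bigl(\textstyle\sum_{i=j+1}^{k+1}|x_i|-(k-j)\bigr)
=\textstyle\sum_{i=0}^{k+1}|x_i|-k,
\]
which equals $|(x_0|\dots|x_{k+1})|$, so $\Delta$ is homogeneous of degree $0$.

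The main obstacle is (iii). Writing $D=d+\delta$, I would split the check into $d\Delta=\Delta d$ and $\delta\Delta=\Delta\delta$. The equality $d\Delta=\Delta d$ is formal: $d$ is the graded derivation induced by $d_\Aa$ on the tensor factors, and since $d_\Aa(1)=0$ the inserted ones contribute nothing, so both sides agree summand by summand once the sign conventions of Appendix B are applied to the new $\varepsilon_j$'s produced by the insertion of $1$. The identity $\delta\Delta=\Delta\delta$ is where the real content lies. For fixed $j$, $\delta$ applied to $(x_0|\dots|x_j|1)\otimes(1|x_{j+1}|\dots|x_{k+1})$ produces three types of terms: contractions among $x_0,\dots,x_j$, contractions among $x_{j+1},\dots,x_{k+1}$, and two boundary contractions of an $x_i$ with the adjacent $1$. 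The first two types assemble exactly into $\Delta\delta(x_0|\dots|x_{k+1})$. The boundary contractions produce terms of the form $(x_0|\dots|x_j)\otimes(1|x_{j+1}|\dots|x_{k+1})$ and $(x_0|\dots|x_j|1)\otimes(x_{j+1}|\dots|x_{k+1})$; using the identification over $\otimes_\Aa$ these telescope against the analogous boundary terms appearing for indices $j-1$ and $j+1$, leaving exactly the two ``endpoint'' contributions at $j=0$ and $j=k$ that correspond to losing the outermost factors in $\delta$, which are absorbed into the appropriate summands of $\Delta\delta$. Keeping track of the signs $(-1)^{\varepsilon_j}$ through this telescoping is the only delicate point; here I would appeal to the sign conventions for transporting $\delta$ from $\oplus_k\Aa^{\otimes k+2}$ to $\oplus_k \Aa\otimes\Aa[1]^{\otimes k}\otimes\Aa$ recorded in Appendix \ref{Appendix B}, which match by construction those used in the analogous verification for the classical bar coalgebra structure.
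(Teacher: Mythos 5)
The paper does not actually prove this lemma --- it is listed among ``facts whose proofs are not difficult'' --- so your direct verification is supplying an argument the paper omits, and it is the natural one: check $\Aa^e$-linearity, degree $0$, and compatibility with $D=d+\delta$. Points (i) and (ii) are exactly right, and the split of (iii) into $d\Delta=\Delta d$ (formal, since $d_\Aa(1)=0$) and $\delta\Delta=\Delta\delta$ (the real content) is the correct organization. One bookkeeping slip in your description of the $\delta$-check: there are no leftover ``endpoint contributions'' at $j=0$ and $j=k$ to be absorbed into $\Delta\delta$. Since $\delta$ vanishes on $P^0$, the first-factor boundary contraction exists only for $j=1,\dots,k$ and the second-factor one only for $j=0,\dots,k-1$, and under the identification $(x_0|\dots|x_{j-1}|x_j)\otimes_\Aa(1|x_{j+1}|\dots) = (x_0|\dots|x_{j-1}|1)\otimes_\Aa(x_j|x_{j+1}|\dots)$ these pair off perfectly (the extra minus sign on the \emph{last} contraction in the paper's formula for $\delta$ is precisely what makes each pair cancel rather than double). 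So the boundary terms vanish entirely, and $\Delta\delta$ is accounted for by the interior contractions alone --- which is consistent with your earlier (correct) assertion that the interior contractions already assemble into $\Delta\delta$, but inconsistent with your later claim that surviving boundary terms are also absorbed there. With that accounting corrected, the plan goes through and the sign-tracking does reduce to the conventions of Appendix B as you say.
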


Notice that the compositions
$$B(\Aa) \stackrel{\Delta}{\To} B(\Aa) \otimes_\Aa B(\Aa)
\stackrel{1\otimes \epsilon}{\To} B(\Aa)\otimes_\Aa \Aa = B(\Aa)$$
$$B(\Aa) \stackrel{\Delta}{\To} B(\Aa) \otimes_\Aa B(\Aa)
\stackrel{ \epsilon \otimes 1}{\To} \Aa\otimes_\Aa B(\Aa) =
B(\Aa)$$ are the identity and that, by lemma
\ref{cofibreplacementlemma} $B(\Aa) \otimes_\Aa B(\Aa)
\stackrel{\epsilon \otimes \epsilon}{\To} \Aa \otimes_\Aa \Aa
=\Aa$ is a cofibrant replacement for $\Aa$ (in the category of
left $\Aa^e$-modules). Since the maps $1 \otimes \epsilon $ and
$(1 \otimes \epsilon) \circ \Delta$ are quasi-isomorphisms, also
$\Delta$ is a quasi-isomorphism.

Now note that since $\epsilon: B(\Aa) \to \Aa$ is a cofibrant
replacement it induces a quasi-isomorphism of complexes
\begin{equation*}
\epsilon_*: \HHom_{\Aa^e}(B(\Aa), B(\Aa)) \To \HHom_{\Aa^e}(B(\Aa),
\Aa),
\end{equation*}
but note that its kernel is not a right ideal and,
consequently, $\epsilon_*$ cannot be a ring homomorphism, nevertheless the properties of the diagonal map $\Delta$ gives us:

\begin{proposition}
The map on cohomology induced by $\epsilon_*$ is multiplicative,
hence gives a canonical ring isomorphism between
$$H^*\HHom_{\Aa^e}(B(\Aa), \Aa) \ \ \ \mbox{and} \ \ \ \
H^*\HHom_{\Aa^e}(B(\Aa),B(\Aa)).$$
\end{proposition}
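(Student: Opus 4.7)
The plan is to leverage the fact that $\epsilon_*$ is already known to be a quasi-isomorphism of complexes; what remains is to verify multiplicativity on cohomology. Chain-level composition in $\HHom_{\Aa^e}(B(\Aa),B(\Aa))$ and the cup product $\phi \cdot \psi = (\phi \otimes \psi) \circ \Delta$ in $\HHom_{\Aa^e}(B(\Aa),\Aa)$ are genuinely incompatible with $\epsilon_*$ at the chain level (as noted just before the statement), so an explicit chain homotopy is required. I will produce one between $\epsilon_*(\tilde\phi \circ \tilde\psi)$ and $\epsilon_*(\tilde\phi) \cdot \epsilon_*(\tilde\psi)$ for cocycles $\tilde\phi, \tilde\psi$, which suffices.

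The central auxiliary claim is the following: for any cocycle $\tilde\psi$ in $\HHom_{\Aa^e}(B(\Aa),B(\Aa))$, writing $\psi := \epsilon \circ \tilde\psi$, the composite
\[
(1 \otimes \psi) \circ \Delta \; : \; B(\Aa) \;\to\; B(\Aa) \otimes_{\Aa} \Aa \;=\; B(\Aa)
\]
is a second $\Aa^e$-linear lift of $\psi$ through $\epsilon$. Indeed, the identity $(1 \otimes \epsilon) \circ \Delta = \mathrm{id}_{B(\Aa)}$ recorded just above the statement, together with the left $\Aa$-linearity of $\psi$, force $\epsilon \circ (1 \otimes \psi) \circ \Delta = \psi$ by a direct calculation on a bar element. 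Since $\epsilon_*$ is a quasi-isomorphism, any two $\Aa^e$-linear cycles of $\HHom_{\Aa^e}(B(\Aa),B(\Aa))$ with the same $\epsilon_*$-image differ by a boundary, so $\tilde\psi$ and $(1 \otimes \psi) \circ \Delta$ are chain-homotopic in $\HHom_{\Aa^e}(B(\Aa),B(\Aa))$.

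With this at hand, writing $\phi := \epsilon \circ \tilde\phi$, the computation becomes
\[
\epsilon_*(\tilde\phi \circ \tilde\psi) \;=\; \phi \circ \tilde\psi \;\sim\; \phi \circ (1 \otimes \psi) \circ \Delta,
\]
where the homotopy is obtained by post-composing the chain homotopy of the previous step with the cocycle $\phi$. Unpacking the right hand side on a typical bar element $(a_0|a_1|\ldots|a_{n+1})$: the formula for $\Delta$ yields a sum of cuts; the bar-degree constraints on $\phi$ and $\psi$ pick out the single summand with $j = |\tilde\phi|$; and the right $\Aa$-linearity of $\phi$ absorbs the scalar $\psi(1|a_{j+1}|\ldots|a_{n+1})$ into the final slot of $\phi$'s argument. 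The resulting expression matches the cup product formula for $(\phi \cdot \psi)(a_0|\ldots|a_{n+1})$ of section \ref{subsubsection B(G)}. The hard part will be a careful sign check involving the shift $s: \Aa \to \Aa[1]$, the Koszul signs appearing in $1 \otimes \psi$, and the bar conventions of Appendix B; the underlying algebraic identity is however forced by the preceding steps.
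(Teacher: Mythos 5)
Your argument is correct and is essentially the route the paper intends: the paper asserts the proposition follows from ``the properties of the diagonal map $\Delta$'' without writing out details, and its proof of the more general Proposition \ref{proposition diagonal map} uses exactly your key mechanism, namely that two $\Aa^e$-linear lifts through a quasi-isomorphism out of a cofibrant source are chain homotopic. Your identification of $(1\otimes\psi)\circ\Delta$ as a second lift of $\psi$ through $\epsilon$, followed by post-composition with $\phi$ and the identity $\phi\circ(1\otimes\psi)\circ\Delta=\Delta^*(\phi\otimes\psi)$, is a valid and complete instantiation of that principle (your remark about a ``single summand'' surviving only applies when $\phi,\psi$ are concentrated in one bar-degree, but in general the full sum over cuts is retained and still matches the cup product formula).
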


To finish this section let us state that the construction performed with the diagonal map for $B(\Aa)$ can
be generalized to other cofibrant
replacements of $\Aa$:
\begin{proposition} \label{proposition diagonal map}
Let $\PP \stackrel{\mu}{\to} \Aa$ be a cofibrant replacement of
$\Aa$ and furthermore assume that there exist a homomorphism of
$\Aa^e$-modules
$$\Delta_\PP : \PP \to \PP \otimes_\Aa \PP$$
such that the compositions
\begin{eqnarray}
\PP \stackrel{\Delta_\PP}{\To} \PP \otimes_\Aa \PP \stackrel{1\otimes \mu}{\To} \Aa \otimes_\Aa \PP = \PP \label{counit PP}\\
\PP \stackrel{\Delta_\PP}{\To} \PP \otimes_\Aa \PP \stackrel{\mu \otimes 1}{\To} \PP \otimes_\Aa \Aa = \PP \nonumber
\end{eqnarray}
are both the identity, and that the map $\Delta_{\PP}$ is coassociative.
Then the product structure defined by the
map
\begin{eqnarray*}
\HHom_{\Aa^e}(\PP,\Aa) \times \HHom_{\Aa^e}(\PP,\Aa) & \to & \HHom_{\Aa^e}(\PP,\Aa)\\
\phi \times \psi & \mapsto & \Delta_\PP^* (\phi \otimes \psi)
\end{eqnarray*}
makes $\HHom_{\Aa^e}(\PP,\Aa)$ into an associative  dg-ring which induces an associative ring structure on $H^*\HHom_{\Aa^e}(\PP,\Aa)$.
This ring is canonically isomorphic to $HH^*(\Aa,\Aa)$.
\end{proposition}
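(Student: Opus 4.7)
The plan is to proceed in two stages: first, verify directly that $\bigl(\HHom_{\Aa^e}(\PP,\Aa),\Delta_\PP^*\bigr)$ is an associative unital dg-ring; and second, produce a canonical ring isomorphism with $HH^*(\Aa,\Aa)$ on cohomology by exploiting the fact that both $\PP$ and $B(\Aa)$ are cofibrant replacements of $\Aa$.

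For the first stage I would check each axiom in turn. Associativity of $\phi \cdot \psi = (\phi \otimes \psi) \circ \Delta_\PP$ is formal from the assumed coassociativity of $\Delta_\PP$. The map $\mu \in \HHom_{\Aa^e}(\PP,\Aa)$ serves as a two-sided unit because of the counit conditions (\ref{counit PP}): for any $\phi$, the composition $(\mu \otimes \phi)\circ\Delta_\PP$ collapses under $\Aa \otimes_\Aa \PP = \PP$ to $\phi \circ \bigl((\mu \otimes 1)\circ\Delta_\PP\bigr) = \phi$, and similarly on the right. The Leibniz rule is a routine sign bookkeeping exercise relying on the fact that $\Delta_\PP$ is a degree zero chain map of $\Aa^e$-modules, exactly in the style of Proposition \ref{lemma Leibniz rule Hom(BA,A)}.

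For the comparison with Hochschild cohomology, since $B(\Aa)\to\Aa$ and $\PP\to\Aa$ are both cofibrant replacements, the standard lifting property for cofibrant objects provides a quasi-isomorphism $f\colon B(\Aa)\to\PP$ of $\Aa^e$-modules with $\mu\circ f \simeq \epsilon$. Pull-back along $f$ gives a quasi-isomorphism of complexes $f^*\colon \HHom_{\Aa^e}(\PP,\Aa) \to \HHom_{\Aa^e}(B(\Aa),\Aa)$ by $\phi \mapsto \phi\circ f$. The main technical obstacle is that $f^*$ need not be strictly multiplicative at the chain level; what one has to prove is that it is multiplicative up to chain homotopy, so that it descends to a ring homomorphism on $H^*$.

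To establish this, I would compare the two $\Aa^e$-module maps
\[
\Delta_\PP \circ f,\qquad (f \otimes f) \circ \Delta_{B(\Aa)}\colon\ B(\Aa) \longrightarrow \PP \otimes_\Aa \PP.
\]
Both become $f$, up to homotopy, after postcomposition with the quasi-isomorphism $1 \otimes \mu\colon \PP \otimes_\Aa \PP \to \PP$, by the counit conditions on $\Delta_\PP$ and on $\Delta_{B(\Aa)}$ respectively. An analogue of Lemma \ref{cofibreplacementlemma} shows that $\PP \otimes_\Aa \PP$ is itself a cofibrant replacement of $\Aa$, so the lifting property applied to the cofibrant source $B(\Aa)$ yields a chain homotopy between the two lifts. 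Transporting this homotopy through $\phi \otimes \psi$ gives $f^*(\phi \cdot \psi) \simeq f^*\phi \cdot f^*\psi$, and hence $H^*(f^*)$ is the desired ring isomorphism with $HH^*(\Aa,\Aa)$; canonicity follows from the uniqueness of $f$ up to chain homotopy.
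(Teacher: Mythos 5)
Your proposal is correct and follows essentially the same route as the paper: first the direct verification that coassociativity, the counit conditions and the chain-map property of $\Delta_\PP$ make $\HHom_{\Aa^e}(\PP,\Aa)$ an associative unital dg-ring, then the comparison map $\alpha^*$ induced by a lift $B(\Aa)\to\PP$, shown to be multiplicative on cohomology by checking that $\Delta_\PP\circ\alpha$ and $(\alpha\otimes\alpha)\circ\Delta$ are homotopic because both are lifts of the same map to the cofibrant replacement $\PP\otimes_\Aa\PP$. The only cosmetic difference is that you postcompose with $1\otimes\mu$ where the paper uses $\mu\otimes\mu$; both versions of the lifting argument are valid.
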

\begin{proof}
The same argument as in lemma \ref{lemma Leibniz rule Hom(BA,A)} shows that
 $\HHom_{\Aa^e}(\PP,\Aa)$ is a dg-ring; its associativity follows from the
  coassociativity of $\Delta_\PP$. The fact that  $\mu$ is a unit follows from the
compositions of (\ref{counit PP}).

Now, since $B(\Aa)$ and $\PP$ are cofibrant, there exists a
quasi-isomorphism of $\Aa^e$-modules $\alpha : B(\Aa) \to \PP$,
unique up to homotopy such that $\mu \circ \alpha = \epsilon$.
This gives a map $\phi \mapsto \alpha^*(\phi)$ from
$ \HHom_{\Aa^e}(\PP,\Aa)$ to $\HHom_{\Aa^e}(B(\Aa),\Aa)$. Taking cohomology
we get a graded group isomorphism
\begin{equation}\label{sssasasas}
H^* \HHom_{\Aa^e}(\PP,\Aa) \to H^*\HHom_{\Aa^e}(B(\Aa),\Aa), \,\phi \mapsto
\alpha^*(\phi).
\end{equation}
which is independent of the choice of $\alpha$.

 Now consider the maps
$$\Delta_P \circ \alpha,
\alpha \otimes \alpha \circ \Delta: B(\Aa) \to \PP \otimes_{\Aa}
\PP.$$ We have that $\mu \otimes \mu : \PP \otimes_{\Aa} \PP \to
\Aa$ is a cofibrant replacement and that
$$\mu \otimes \mu \circ \Delta_P \circ \alpha = \mu \otimes \mu
\circ\alpha \otimes \alpha \circ \Delta = \epsilon \otimes
\epsilon.$$ We conclude that $\Delta_P \circ \alpha$ and $\alpha
\circ \Delta$ are homotopic. Hence they induce the same map on
cohomology which implies that (\ref{sssasasas}) is a ring
homomorphism. As $H^* \HHom_{\Aa^e}(B(\Aa),\Aa)$ is canonically isomorphic
to $HH^*(\Aa,\Aa)$, the proposition follows.

\end{proof}

\section{Appendix B} \label{Appendix B}

In this section we will explain how the sign conventions arise for the Bar construction of section \ref{section bar}.
The idea consists in transporting the standard differentials of the complex $\oplus_k\Aa^{k+2}$ to the complex $\oplus_k \left( \Aa \otimes \Aa[1]^k \otimes \Aa \right)$ via some chosen isomorphisms.

Let us recall from (\ref{shift functor}) that there is an isomorphism $s: \Aa \to \Aa[1]$ ; $a \mapsto sa$ such that $dsa= - sda$.
Define the isomorphism $$\Phi_k :\Aa^{k+2} \to \Aa \otimes \Aa[1]^k \otimes \Aa$$ as the composition of the maps
$$\left(\operatorname{Id}  \otimes s \otimes \operatorname{Id}^{\otimes k}\right) \circ \cdots \circ \left( \operatorname{Id} ^{\otimes k-1} \otimes s \otimes \operatorname{Id}^{\otimes 2}\right)  \circ \left( \operatorname{Id} ^{\otimes k} \otimes s \otimes \operatorname{Id}\right).$$

Because $s$ is an odd map we get
\begin{align*}\left( \operatorname{Id} ^{\otimes i} \otimes s \otimes \operatorname{Id}^{\otimes k-i+1}\right) (a_0|\dots | & a_{k+1}) =\\ &(-1)^{|a_0| + \cdots + |a_i|}
 (a_0|\dots | a_i | sa_{i+1} | a_{i+2}| \dots |a_{k+1})
 \end{align*}
 and therefore
 \begin{align*}
 \Phi_k(a_0 |\dots | a_{k+1}) = (-1)^{\sum_{i=0}^{k-1} (k-i)|a_i|} (a_0 | sa_1 | \dots |sa_k |a_{k+1}).
 \end{align*}

The standard differential for the Bar resolution is defined as the sum $\delta^0 + \dots + \delta^k$ with
\begin{align*}
\delta^j :\Aa^{k+2} & \to  \Aa^{k+1}\\
(a_0| \dots |a_{k+1}) & \mapsto  (-1)^j (a_0 | \dots |a_{j-1}| a_ja_{j+1} | a_{j+2} | \dots |a_{k+1}).
\end{align*}
We will transport these maps $\delta^j$ as maps $\Aa \otimes \Aa[1]^k \otimes \Aa \to \Aa \otimes \Aa[1]^{k-1} \otimes \Aa$ to define the differential defined in (\ref{differential delta}).

We have that
\begin{align*}
 \Phi_k   (a_0 | \dots & | a_ja_{j+1} |   \dots |a_{k+1})  = \\ & (-1)^{\sum_{i=0}^{j} (k-i-1)|a_i|  + \sum_{i=j+1}^{k-1} (k-i)|a_i|} (a_0 | sa_1|\dots | s(a_ja_{j+1}) | \dots |sa_k|a_{k+1})
\end{align*}
which implies that the induced map becomes
\begin{align*}
\bar{\delta}^j : \Aa \otimes \Aa[1]^k \otimes \Aa &  \to \Aa \otimes \Aa[1]^{k-1} \otimes \Aa \\
(a_0 |sa_1 | \dots | sa_k | a_{k+1}) & \mapsto (-1)^{\varepsilon_j} (a_0 | sa_1|\dots | s(a_ja_{j+1}) | \dots |sa_k|a_{k+1})
\end{align*}
which satisfies  $\bar{\delta}^j \circ \Phi_k = \Phi_{k-1} \circ \delta^j$ and with
$$\varepsilon_j := |a_0| + |a_1| + \cdots + |a_j| -j.$$
Note that when $j=k$ we get that
$$\bar{\delta}^k(a_0 |sa_1 | \dots | sa_k | a_{k+1}) = - (-1)^{\varepsilon_k} (a_0 | sa_1|\dots |sa_{k-1}|a_ka_{k+1})$$
and therefore the differential $\delta$ in the Bar construction $B(\Aa)$ becomes
$$\delta : = \bar{\delta}^0 + \dots +\bar{\delta}^k$$
which is the one defined in (\ref{differential delta}).

\smallskip

To finalize let us explain how the differential defined in (\ref{internal differential Aa^k}) comes from the internal differential of  $\Aa \otimes \Aa^k \otimes \Aa$ and the graded commutation of $s$ and $d$:
\begin{align*}
d(a_0 |  sa_1 & | \dots |   sa_k| a_{k+1}) \\ = & (da_0 |  sa_1 |
\dots | sa_k| a_{k+1}) +
 \sum_{j=1}^k (-1)^{\varepsilon_{j-1}} (a_0 | sa_1 | \dots|dsa_j| \dots  | sa_k| a_{k+1}) \\
& + (-1)^{\varepsilon_k} (a_0 | sa_1 |\dots  | sa_k| da_{k+1}) \\
 = &  (da_0 |  sa_1 | \dots | sa_k| a_{k+1}) -
 \sum_{j=1}^k (-1)^{\varepsilon_{j-1}} (a_0 | sa_1 | \dots|sda_j| \dots  | sa_k| a_{k+1}) \\
& + (-1)^{\varepsilon_k} (a_0 | sa_1 |\dots  | sa_k| da_{k+1}).
\end{align*}

\section{Appendix C} \label{Appendix C}
Let $M$ be a compact, oriented, differentiable, connected and simply connected manifold. In this section we
resolve the technical problems about the Hochschild cohomology of
the singular cochains $C^* := C^*(M)$ that arise from the fact
that $C^*$ is not a free $\integer$-module (unless $M$ is a
point). This is so because an infinite product of copies of
$\integer$ is not free over $\integer$. This problem would
disappear if one uses coefficients over a field, but we prefer to develop the theory over
$\integer$ as it will turn out that with some effort  it is
possible to do so. On the algebraic side we could solve this
problem by considering simplicial cochains $S^*$ instead of singular cochains (having an explicit simplicial decomposition of $M$). Note however that
the algebraic object that naturally corresponds to the topological
side of this paper is the Hochschild cohomology of $C^*$ and not
that of $S^*$, so we are forced to deal with $C^*$.

Recall that one has
$$HH^*(C^*, C^*) = H^*\HHom_{C^{*e}}(B(C^*),C^*)$$
and that
$$HH^*(S^*,S^*) = H^*\HHom_{S^{*e}}(B(S^*),S^*) \cong \Ext_{S^{*e}}(S^*,S^*).$$
The fact that $C^*(M)$ is not free over $\integer$ implies that
$B(C^*)$ cannot be cofibrant in the sense of section \ref{section derived}.
Nevertheless, this quantity can be interpreted as a
\emph{relative} Ext-group (see Lemma 9.1.3 in \citep{Weibel}) and it is even possible to
define a model category structure on the category of dg-modules
over $C^{*e}$ in which $B(C^*)$ is cofibrant. In principle, all
the homological algebra of this paper could be translated to this
setup, but it turns out that this is not necessary since we shall
prove that
\begin{theorem}\label{same HH} There is a canonical isomorphism $$HH^*(C^*,C^*) \cong HH^*(S^*,S^*)$$ as graded $\integer$-algebras.
\end{theorem}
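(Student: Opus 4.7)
Fix a smooth triangulation of $M$ and let $\rho : C^* \twoheadrightarrow S^*$ be the restriction map from singular to simplicial cochains. With the Alexander--Whitney cup products on both sides, $\rho$ becomes a surjective dg-ring quasi-isomorphism; its kernel $K$ is acyclic, and because $S^*$ is $\integer$-free, the short exact sequence $0 \to K \to C^* \to S^* \to 0$ splits as a sequence of $\integer$-modules in each degree. The strategy is to exhibit a natural zig-zag of dg-ring quasi-isomorphisms between the two Hochschild complexes, from which the theorem follows.

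The zig-zag in question is
$$
\HHom_{C^{*e}}\bigl(B(C^*), C^*\bigr) \xrightarrow{\rho_{*}} \HHom_{C^{*e}}\bigl(B(C^*), S^*\bigr) \xleftarrow{B(\rho)^{*}} \HHom_{C^{*e}}\bigl(B(S^*), S^*\bigr) \;=\; \HHom_{S^{*e}}\bigl(B(S^*), S^*\bigr),
$$
where the final equality is tautological because both the $C^{*e}$-actions on $B(S^*)$ and on $S^*$ factor through $\rho$, so a $C^{*e}$-linear map between them is the same as an $S^{*e}$-linear map. Both arrows are dg-ring homomorphisms for the cup product induced by the diagonals on the bar constructions: $\rho_*$ is multiplicative because $\rho$ is a ring map, and $B(\rho)^*$ is multiplicative because $B(\rho)$ is natural with respect to the diagonals defined in Section~\ref{section derived}. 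Hence any quasi-isomorphism obtained will automatically be a graded ring isomorphism on $HH^*$.

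To see that $B(\rho)^{*}$ is a quasi-isomorphism, one first checks that $B(\rho) : B(C^*) \to B(S^*)$ is itself a quasi-isomorphism of $C^{*e}$-modules. Via the bar filtration this reduces to showing that each $\rho^{\otimes k} : (C^*)^{\otimes k} \to (S^*)^{\otimes k}$ is a quasi-isomorphism, which follows from K\"unneth since $S^*$ is $\integer$-free and $H^*(M;\integer)$ is finitely generated in each degree. Because $B(S^*)$ is degreewise $\integer$-free, a spectral-sequence comparison along the bar filtration on both sides then shows that $B(\rho)^{*}$ is a quasi-isomorphism of total complexes.

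The main obstacle is to establish that $\rho_{*}$ is a quasi-isomorphism. Indeed, $B(C^*)$ is \emph{not} cofibrant over $C^{*e}$ in the strict sense of Appendix~\ref{section derived}, because $C^*$ is only $\integer$-flat and not $\integer$-free; the standard argument that $\HHom$-out-of-cofibrant preserves quasi-isomorphisms of the second variable does not apply. However the cone of $\rho_{*}$ equals $\HHom_{C^{*e}}(B(C^*), K)$, and the bar filtration on $B(C^*)$ reduces its acyclicity to that of $\HHom_{\integer}\bigl((C^*)^{\otimes k}, K\bigr)$ for every $k$. Here the degreewise $\integer$-splitting of $0 \to K \to C^* \to S^* \to 0$ is decisive: it makes $\HHom_{\integer}\bigl((C^*)^{\otimes k}, -\bigr)$ preserve exactness of this sequence, and since $(C^*)^{\otimes k}$ is $\integer$-flat and bounded below while $K$ is acyclic and bounded below, a universal-coefficient-type argument on the resulting long exact sequence yields the required acyclicity. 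This completes the zig-zag and hence the isomorphism of graded rings.
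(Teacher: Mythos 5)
Your zig-zag is exactly the one the paper uses (post-composition with the restriction map into $\HHom_{C^{*e}}(B(C^*),S^*)$, pre-composition with the induced map of bar constructions out of $\HHom_{S^{*e}}(B(S^*),S^*)$), and your reduction along the bar filtration to a degreewise statement about $\HHom_\integer((C^*)^{\otimes k},-)$ is also the paper's strategy. But the step you flag as ``the main obstacle'' is where the argument genuinely breaks. Acyclicity of $\HHom_\integer\bigl((C^*)^{\otimes k},K\bigr)$ does \emph{not} follow from $(C^*)^{\otimes k}$ being $\integer$-flat and $K$ being acyclic and bounded below: flatness controls $\otimes$, not $\HHom$ out of a module, and the universal-coefficient theorem for $\HHom$ requires the source to be degreewise \emph{projective}. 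For example, $\rational$ is flat and the complex $0\to\integer\to\rational\to\rational/\integer\to 0$ is acyclic and bounded, yet $\HHom_\integer(\rational,-)$ applied to it has $H^2\cong\Ext^1_\integer(\rational,\integer)\neq 0$. In your situation $C^n$ is an infinite product of copies of $\integer$, which is not free (Specker), hence not projective -- this is precisely the pathology the appendix exists to circumvent, so it cannot be waved away. The degreewise splitting of $0\to K\to C^*\to S^*\to 0$ only produces the long exact sequence; concluding that its third term is acyclic is equivalent to the quasi-isomorphism you are trying to prove, so as written the argument is circular at the decisive point.

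The missing ingredient, and the actual content of the paper's proof, is that $C^*$ and $S^*$ are \emph{duals of free chain complexes}. Since $S_*\hookrightarrow C_*$ is a quasi-isomorphism of complexes of free $\integer$-modules, it is a chain homotopy equivalence (the paper's Lemma on homotopy of free complexes), hence so is $f=i^*:C^*\to S^*$, and chain homotopy equivalences survive every additive functor. Concretely, the paper rewrites $\HHom_\integer(C^{*\otimes k},C^*)\cong\HHom_\integer(C^{*\otimes k}\otimes C_*,\integer)$ and compares with $C^{*\otimes k}\otimes S_*$ using the homotopy equivalence $S_*\simeq C_*$; equivalently, in your formulation one should observe that $\HHom_\integer(P,f)$ is a homotopy equivalence for \emph{any} graded $P$, so its degreewise-split kernel $\HHom_\integer(P,K)$ is acyclic. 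The same issue recurs in your treatment of $B(\rho)^*$: on the associated graded you need $\HHom_\integer(\rho^{\otimes k},S^*)$ to be a quasi-isomorphism, and K\"unneth plus freeness of the \emph{target} does not give this because the source $(C^*)^{\otimes k}$ is again not projective; what saves you is that $\rho^{\otimes k}$ is a homotopy equivalence. Finally, since the bar filtration is infinite, you should also say a word about convergence (the paper uses that the filtrations are complete Hausdorff and that the cohomologies are degreewise finitely generated); this is secondary, but it is not automatic.
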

The reason for this to be possible is that $C^*$ is the dual of
$C_*$ which is free over $\integer$. We have
\begin{lemma}\label{homotopy lemma} Let $(X,d)$ and $(Y, \partial)$ be (unbounded) complexes of free $\integer$-modules such that
$H^*(X) \cong H^*(Y)$. Then $X$ and $Y$ are homotopic and any
quasi-isomorphism $q: X \to Y$ is a homotopy equivalence.
\end{lemma}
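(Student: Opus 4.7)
The strategy is to prove the second assertion first — that any quasi-isomorphism between complexes of free abelian groups is a homotopy equivalence — and then derive the first from it by producing a common resolution. The whole argument rests on the fact that $\integer$ is a PID (hence hereditary), so subgroups of free abelian groups are free and every module has a free resolution of length at most one.

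The crucial subsidiary lemma is that every acyclic complex $A^\bullet$ of free abelian groups is null-homotopic. Since $\integer$ is a PID, the cycles $Z^n := \ker d^n \subseteq A^n$ are free. Acyclicity gives short exact sequences $0 \to Z^n \to A^n \to Z^{n+1} \to 0$, which split because $Z^{n+1}$ is free. Choosing splittings identifies $A^n \cong Z^n \oplus Z^{n+1}$ in coordinates where $d^n(z,w) = (w,0)$, and the formula $s^n(z,w) = (0,z)$ then yields a contracting homotopy satisfying $sd + ds = \operatorname{id}$. This calculation is local in each degree, so no convergence issues arise for unbounded complexes.

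Given this, if $q \colon X^\bullet \to Y^\bullet$ is a quasi-isomorphism between complexes of free abelian groups, its mapping cone $\mathrm{Cone}(q)$ has components $X^{n+1} \oplus Y^n$, which are free, and is acyclic since $q$ is a quasi-iso. The subsidiary lemma says $\mathrm{Cone}(q)$ is null-homotopic, and by the standard equivalence between null-homotopy of the cone and a map being a chain homotopy equivalence, we conclude $q$ is a homotopy equivalence. For the first assertion, I would build a common resolution: for each $n$, choose a length-one free resolution $0 \to F_1^n \to F_0^n \to H^n \to 0$ of $H^n := H^n(X) \cong H^n(Y)$, let $R_n^\bullet$ be the 2-term complex $(F_1^n \to F_0^n)$ placed in degrees $n-1$ and $n$, and set $R^\bullet := \bigoplus_n R_n^\bullet$. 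A chain map $R_n^\bullet \to X^\bullet$ realizing the inclusion $H^n \hookrightarrow H^*(X)$ is constructed degree by degree: first lift $F_0^n \twoheadrightarrow H^n$ through $Z^n(X) \twoheadrightarrow H^n(X)$ using projectivity of $F_0^n$; then, since the composite $F_1^n \to F_0^n \to X^n$ lands in $B^n(X)$, lift it further through $X^{n-1} \twoheadrightarrow B^n(X)$ using projectivity of $F_1^n$. Summing gives a quasi-isomorphism $R^\bullet \to X^\bullet$, and likewise $R^\bullet \to Y^\bullet$; both are homotopy equivalences by the preceding step, and composing with a chain-homotopy inverse of the first yields the desired homotopy equivalence $X^\bullet \simeq Y^\bullet$.

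The main obstacle is the contractibility of acyclic free complexes — the entire lemma hinges on it — and the key point is to exploit the hereditary nature of $\integer$ so that all the intervening short exact sequences split, allowing a Whitehead-type argument to go through in unbounded degrees exactly as in the bounded case.
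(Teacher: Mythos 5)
Your proof is correct, but it takes a genuinely different route from the paper's. The paper uses the freeness of $\operatorname{Im} d^n \subseteq X^{n+1}$ to split $X$ \emph{itself} as a direct sum of two-term subcomplexes $A_n = (\operatorname{Im} d^{n-1} \hookrightarrow \operatorname{Ker} d^n)$; each $A_n$ is a projective resolution of $H^n(X)[-n]$, so $A_n \simeq B_n$ piece by piece, which gives $X \simeq Y$ directly, and the second assertion is then deduced by observing that $X$ and $Y$ are cofibrant. You instead prove the second assertion first, by contracting the (acyclic, degreewise free) mapping cone of $q$ and invoking the standard criterion that a map with contractible cone is a homotopy equivalence, and then obtain the first assertion by constructing an auxiliary free complex $R = \bigoplus_n R_n$ mapping quasi-isomorphically to both $X$ and $Y$. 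The underlying mechanism --- $\integer$ is hereditary, so every short exact sequence ending in a subgroup of a free module splits --- is identical, and your explicit contraction $s(z,w)=(0,z)$ is exactly the paper's coordinate decomposition specialized to an acyclic complex. What your version buys is self-containedness: it avoids any appeal to cofibrancy or to the uniqueness of projective resolutions up to homotopy, replacing them with an elementary Whitehead-type argument that works verbatim for unbounded complexes. What the paper's version buys is brevity and a stronger structural conclusion, namely that $X$ is \emph{isomorphic} (not merely homotopy equivalent) to a direct sum of shifted two-term free resolutions of its cohomology groups.
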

\begin{proof}
For every $n$ we have that $X^n \to \operatorname{Im} d^n$ splits,
since $\operatorname{Im} d^n$ is a free $\integer$-module, being a
submodule of the free $\integer$-module $X^{n+1}$. Therefore $X^n
\cong \operatorname{Ker} d^{n} \oplus \operatorname{Im} d^n$ and
the differential $d^n:X^n \to X^{n+1}$ corresponds to $(a,b)
\mapsto (b,0)$.

Let $A_n$ denote the subcomplex $\operatorname{Im} d^{n-1}
\hookrightarrow \operatorname{Ker} d^{n}$ of $X$. It follows that
$$
X \cong \oplus_{n \in \integer} A_n
$$
Similarly, $Y \cong \oplus_{n \in \integer} B_n$, where $B_n$ is
the subcomplex $\operatorname{Im}
\partial^n \hookrightarrow \operatorname{Ker} \partial^{n+1}$ of
$Y$.

Since $H^n(A_n) \cong H^n(X) \cong H^n(Y) \cong H^n(B_n)$ and
$A_n$ is a projective resolution of $H^n(A_n)[-n]$ and $B_n$ is a
projective resolution of $H^n(B_n)[-n]$ we conclude that $A_n$ is
homotopic to $B_n$. Thus $X$ is homotopic to $Y$.

For the last assertion, note that each $A_n$ and $B_n$ are
cofibrant objects of $C(\integer)$. Thus $X$ and $Y$ are cofibrant
as well and hence any quasi-isomorphism between them is a homotopy.
\end{proof}

We fix a finite simplicial decomposition of $M$ on which $G$ acts
simplicialy and we consider the simplicial decomposition that the barycentric
subdivision defines (this is in order to get a simplicial decomposition which
induces a $G$-CW decomposition). If we think of a $k$-simplex in $M$ as a certain map
from $\Delta_k$ to $M$ it is clear how to get an embedding $i_*:
S_* \to C_*$ which is a $G$-equivariant quasi-isomorphism.
Moreover, if we denote by $f= i^*: C^* \to S^*$ the corresponding
map on cochains, then $f$ is a morphism of dg-algebras. We have
\begin{proposition} $f: C^* \to S^*$ is a homotopy equivalence.
\end{proposition}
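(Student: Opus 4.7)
My plan is to deduce the statement by dualizing a homotopy equivalence on the chain level, using the preceding Lemma on complexes of free $\integer$-modules.

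First I would observe that $f = i^*$ is the $\integer$-dual of the inclusion $i_* : S_* \to C_*$, where $S_* = S_*(M)$ is the simplicial chain complex associated to the fixed simplicial decomposition of $M$ and $C_* = C_*(M)$ is the singular chain complex. Both $S_*$ and $C_*$ are free $\integer$-modules in each degree: $S_n$ is free on the finitely many $n$-simplices of the chosen triangulation, and $C_n$ is free on the set of singular $n$-simplices of $M$. Moreover $i_*$ is the classical quasi-isomorphism between simplicial and singular chains, so $H_*(i_*)$ is an isomorphism and in particular $H_*(S_*) \cong H_*(C_*)$.

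Next I would invoke Lemma \ref{homotopy lemma}: since $S_*$ and $C_*$ are (unbounded) complexes of free $\integer$-modules with isomorphic homology, and $i_*$ is a quasi-isomorphism between them, Lemma \ref{homotopy lemma} asserts that $i_*$ is a homotopy equivalence. Hence there exist a chain map $j_* : C_* \to S_*$ and chain homotopies $h : S_* \to S_{*+1}$, $k : C_* \to C_{*+1}$ such that
\[
j_* \circ i_* - \mathrm{id}_{S_*} = d h + h d, \qquad i_* \circ j_* - \mathrm{id}_{C_*} = d k + k d .
\]

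Finally I would dualize. Applying $\Hom_\integer(-,\integer)$ is an additive functor that sends chain homotopies to cochain homotopies: the duals $h^* : S^* \to S^{*-1}$ and $k^* : C^* \to C^{*-1}$ satisfy
\[
i^* \circ j^* - \mathrm{id}_{S^*} = d h^* + h^* d, \qquad j^* \circ i^* - \mathrm{id}_{C^*} = d k^* + k^* d .
\]
Therefore $j^* : S^* \to C^*$ is a homotopy inverse to $f = i^*$, and $f$ is a homotopy equivalence of cochain complexes. No step here looks like a serious obstacle; the only point requiring care is that Lemma \ref{homotopy lemma} applies to unbounded complexes (which it does, since the splittings $X^n \cong \ker d^n \oplus \operatorname{Im} d^n$ used in its proof are available in every degree), so that one may apply it directly to the singular chain complex without any boundedness hypothesis.
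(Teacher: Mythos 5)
Your proposal is correct and follows exactly the paper's argument: both proofs apply Lemma \ref{homotopy lemma} to the quasi-isomorphism $i_*: S_* \to C_*$ of free $\integer$-module complexes to get a chain homotopy equivalence, and then dualize. You merely spell out the dualization step that the paper leaves implicit.
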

\begin{proof} Since $S_*$ and $C_*$ are free $\integer$-modules, Lemma \ref{homotopy lemma}
 shows that $i_*$ is a homotopy-equivalence. Hence
also $f = i^*$ is a homotopy equivalence.
\end{proof}
We can now prove

\begin{proof}[of theorem \ref{same HH}] The dg-ring map $f: C^* \to S^*$
  induces a $C^{*e}$-module structure on $S^*$ and
a $B(C^*)^e$-linear map
$$
\overline{f}: B(C^*) \to B(S^*), \ (c_0 \vert \ldots \vert c_{k+1})
\mapsto (f(c_0) \vert \ldots \vert f(c_{k+1})).
$$

Note that $f$ will not have a multiplicative homotopy inverse and
hence that $\overline{f}$ is not a homotopy equivalence.

Nevertheless, we have canonical maps
\begin{equation*}
  \HHom_{C^{*e}}(B(C^*),C^*) \to
\HHom_{C^{e*}}(B(C^*),S^*), \
   \phi \mapsto  f \circ \phi
\end{equation*}
\begin{equation*}
  \HHom_{S^{*e}}(B(S^*),S^*) \to
\HHom_{C^{*e}}(B(C^*),S^*), \
   \phi \mapsto  \phi \circ \overline{f}
\end{equation*}
that we claim are both quasi-isomorphisms. This would prove Theorem \ref{same HH}.

 To show that each of the previous maps are q.i.'s, we will use specific spectral sequences
on both sides of each map, whose zeroth differentials will avoid the part of the total differential that
reflects the multiplication, and that will therefore become isomorphic at the first page. Let us be more explicit.

For $\Aa$ a dg-ring define $T(\Aa) = \oplus_{i \geq 0}
\Aa^{\otimes i}[i]$. Consider the restriction isomorphisms
\begin{align*}
\HHom_{\Aa^e}(\Aa^{\otimes i +2}[i], \Aa) \to &
\HHom_{\integer}(\Aa^{\otimes i}[i], \Aa) \\
 f \mapsto  & \{\mathbf{a}
\mapsto f(1 \otimes \mathbf{a} \otimes 1)\}
\end{align*}
whose inverse isomorphism is $$g \mapsto \{(a_0 \vert \ldots \vert
a_{i+1}) \mapsto a_0g(a_1 \vert \ldots \vert a_i)a_{i+1}\}.$$

If we use these isomorphisms to transport the differential  in
$\HHom_{\Aa^e}(B(\Aa),\Aa)$ to $\HHom_\integer(T(\Aa),\Aa)$ we get
an isomorphisms of these two homomorphism complexes. Explicitly,
the differential $d$ on $\HHom_\integer(T(\Aa),\Aa)$ is given by
$$
df(\mathbf{a}) = d_t(f(\mathbf{a})) - (-1)^{|f|} (\delta f (\mathbf{a}) + f(d_s(\mathbf{a})))
$$
where $d_t$ is the differential of $\Aa$, the target of the homomorphism,   $d_s$ is the differential on the source, which becomes
\begin{align*}
d_s(a_1|...|a_k) = -\sum_{i=1}^{k} (-1)^{\varepsilon_{i-1}} (a_1 | ... |da_i | ... | a_k),
\end{align*}
and $\delta f$ is the homomorphism defined as
\begin{align*}
\delta f (a_1 | ... | a_k) = & a_1 f(a_2|...|a_k) + \sum_{i=2}^{k-1}(-1)^{\varepsilon_i}f(a_1 |...|a_ia_{i+1}|...|a_k) \\
 & - (-1)^{\varepsilon_{k}}f(a_1|...|a_{k-1})a_k.
\end{align*}

Applying the previous discussion to $\Aa = C^*$ and $\Aa = S^*$ we see
that it suffices to show that the following two dg-maps are
quasi-isomorphisms:
\begin{equation*}\label{first q.i.}
  \HHom_{\integer}(T(C^*),C^*) \to
\HHom_{\integer}(T(C^*),S^*), \
   \phi \mapsto  f \circ \phi
\end{equation*}
\begin{equation*}\label{second q.i.}
  \HHom_\integer(T(S^*),S^*) \to
\HHom_\integer(T(C^*),S^*), \
   \phi \mapsto  \phi \circ \overline{f}
\end{equation*}

Let us prove that the first map is a quasi-isomorphism

\begin{lemma} \label{lemma first q.i.}
The map
\begin{equation*}
 \Phi : \HHom_{\integer}(T(C^*),C^*) \to
\HHom_{\integer}(T(C^*),S^*), \
   \phi \mapsto  f \circ \phi
\end{equation*}
is a quasi-isomorphism.
\end{lemma}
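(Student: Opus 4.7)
The plan is to reduce the statement to a simpler fact that $f$ directly implies, by introducing a tensor-length filtration on both sides of $\Phi$. For any $C^{*e}$-module $X$, set
$$F^p \HHom_{\integer}(T(C^*), X) := \{ \phi : \phi|_{(C^*)^{\otimes i}[i]} = 0 \text{ for all } i < p \}.$$
Since $T(C^*) = \bigoplus_{i \geq 0} (C^*)^{\otimes i}[i]$, one has $\HHom_\integer(T(C^*), X) \cong \prod_{i \geq 0} \HHom_\integer((C^*)^{\otimes i}[i], X)$, so that the truncated quotients $X_p := \HHom_\integer(T(C^*), X)/F^p \cong \prod_{i<p} \HHom_\integer((C^*)^{\otimes i}[i], X)$ form a tower of obviously surjective maps whose inverse limit is the whole complex, with $\bigcap_p F^p = 0$. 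The analogous construction yields $Y_p$ on the target side.

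I would then analyse how the three pieces of the total differential interact with this filtration. The operators $d_t$ and $\phi \mapsto \phi \circ d_s$ act one tensor-component at a time and so preserve $F^p$, while the bar piece $\delta$ produces $(\delta \phi)_k$ out of the component $\phi_{k-1}$ of $\phi$ (visible from the explicit formula, in which each summand of $\delta \phi(a_1|\dots|a_k)$ involves $\phi$ evaluated on a tensor of length $k-1$), so $\delta$ strictly raises the filtration degree by one. Hence $\delta$ vanishes on the associated graded, and the induced differential on $F^p/F^{p+1} \cong \HHom_\integer((C^*)^{\otimes p}[p], X)$ is the ordinary internal Hom-complex differential $\phi \mapsto d_X \circ \phi - (-1)^{|\phi|} \phi \circ d_s$. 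Because $\Phi$ is post-composition with $f$, it respects the filtration and descends on each $F^p/F^{p+1}$ to the map $f_* : \HHom_\integer((C^*)^{\otimes p}[p], C^*) \to \HHom_\integer((C^*)^{\otimes p}[p], S^*)$; since $f$ is a chain homotopy equivalence (established earlier in this appendix), a homotopy inverse of $f$ induces one for $f_*$, so $f_*$ is a homotopy equivalence, and in particular a quasi-isomorphism.

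From here I would conclude by induction on $p$: the base case $X_0 = 0$ is trivial, and the inductive step applies the $5$-lemma to the long exact sequences induced by the pair of short exact sequences $0 \to F^p/F^{p+1} \to X_{p+1} \to X_p \to 0$ and its twin on the target side, linked by $\Phi$, showing that every $\Phi_p : X_p \to Y_p$ is a quasi-isomorphism. Finally Milnor's short exact sequence
$$0 \to {\lim}^1\, H^{n-1}(X_p) \to H^n(\lim X_p) \to \lim H^n(X_p) \to 0,$$
which applies because the tower is termwise surjective, together with its counterpart for $Y$ and a last application of the $5$-lemma, upgrades the levelwise quasi-isomorphisms to the desired quasi-isomorphism $\Phi$. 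The only delicate verification is that $\delta$ strictly raises the filtration rather than merely preserving it (this is what decouples the associated graded into independent Hom-complexes, and all the comparison machinery rests on it); convergence is automatic since the filtration is complete Hausdorff, and the remaining steps are routine filtration-comparison bookkeeping.
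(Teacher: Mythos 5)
Your proof is correct, and its core — the decreasing filtration by tensor length, the observation that $d_t$ and precomposition with $d_s$ preserve the filtration while the bar piece $\delta$ strictly raises it, and the identification of the associated graded differential with the ordinary $\HHom$-complex differential so that everything reduces to $f$ being a chain homotopy equivalence — is exactly the paper's strategy. Where you genuinely diverge is in two places. First, on the graded pieces the paper dualizes, rewriting $\HHom_\integer(C^{*\otimes k}[k],C^*)$ as $\HHom_\integer(C^{*\otimes k}[k]\otimes C_*,\integer)$ and invoking the homotopy equivalence $i_*:S_*\to C_*$, whereas you simply post-compose with $f$ and its homotopy inverse; your version is more direct and equally valid, since both rest on the same Proposition of the appendix. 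Second, and more substantially, the passage to the limit: the paper runs the spectral sequence of the filtration, invokes weak convergence and Hausdorffness via McCleary's Theorem 3.9 to get an isomorphism of completions of the cohomology, and then needs the additional input that $H^*\HHom_\integer(T(C^*),C^*)$ is finitely generated in each degree (justified by its identification with $H_*(\LL M^{-TM})$) to upgrade this to an isomorphism of the cohomologies themselves. Your route — truncated quotients $X_p$, induction with the five lemma on the short exact sequences $0\to F^p/F^{p+1}\to X_{p+1}\to X_p\to 0$, and the Milnor $\lim^1$ sequence for the termwise surjective tower — reaches the same conclusion without any finiteness hypothesis on the cohomology, which is a cleaner and more self-contained way to handle convergence; the cost is only the routine verification that the total differential descends to the tower and that its inverse limit recovers the full complex, which you correctly note follows from $\delta$ shifting the tensor degree by exactly one.
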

\begin{proof}
Consider the filtrations
\begin{eqnarray*}
F_k = \HHom_\integer \left( \bigoplus_{i=k}^\infty C^{* \otimes i}[i], C^*\right) \ \ \ \ \ \
\overline{F}_k = \HHom_\integer \left( \bigoplus_{i=k}^\infty C^{* \otimes i}[i], S^*\right)
\end{eqnarray*}
of $ \HHom_{\integer}(T(C^*),C^*)$ and $\HHom_{\integer}(T(C^*),S^*)$ respectively,
 which are compatible with the homomorphism $\Phi$.

The associated spectral sequences associated to $F_*$ and $\overline{F}_*$ have for zeroth page the complexes
\begin{eqnarray*}
E_0 = \bigoplus_{k=0}^\infty  F_k/F_{k+1}= \bigoplus_{k=0}^\infty  \HHom_\integer( C^{* \otimes k}[k],C^*)\\
\overline{E}_0 = \bigoplus_{k=0}^\infty  \overline{F}_k/\overline{F}_{k+1}= \bigoplus_{k=0}^\infty  \HHom_\integer( C^{*\otimes k}[k],S^*)
\end{eqnarray*}
and whose zeroth differentials are
\begin{eqnarray*}
d^0 \phi ({\bf a}) = d_{C^*}(\phi({\bf a})) - (-1)^{|\phi|} \phi(d_s({\bf a}))  \\ \overline{d}^0 \overline{\phi} ({\bf a}) = d_{S^*}(\overline{\phi}({\bf a})) - (-1)^{|\overline{\phi}|} \overline{\phi}(d_s({\bf a})).
\end{eqnarray*}
where $d_s$ is the internal differential of the complex $C^{* \otimes k}[k]$.

Applying the canonical isomorphisms
\begin{eqnarray*}
\HHom_\integer(C^{* \otimes k}[k], C^*) \cong \HHom_\integer(C^{* \otimes k}[k]\otimes C_*, \integer)\\
\HHom_\integer(C^{* \otimes k}[k], S^*) \cong \HHom_\integer(C^{* \otimes k}[k]\otimes S_*, \integer)
\end{eqnarray*}
we see that the induced differentials on the right hand side, are just the canonical differentials
 given by the dualization of the tensor product of $d_s$ with $\partial_{C_*}$
 and of $d_s$ and $\partial_{S_*}$ respectively, where $\partial$ denotes the differential at the chains level.

Because the map $i: S_* \to C_*$ induces a homotopy equivalence, we see that the induced map on the
 zeroth pages $\Phi: E_0 \to \overline{E}_0$ is a quasi-isomorphism, and therefore the induced
 homomorphism on the first pages becomes an isomorphism $\Phi: E_1 \stackrel{\cong}{\to} \overline{E}_1.$
We have therefore that the spectral sequences are isomorphic after the first page and moreover that
 the filtrations are both Hausdorff and weakly convergent.

The convergence of the spectral sequences tell us that $\Phi$ induces an isomorphism between
the inverse limits (see Theorem 3.9 of \citep{McCleary})
\begin{align} \nonumber
\Phi: \lim_{\leftarrow k} \left( H^*\HHom_\integer(T(C^*),C^*) / {\rm Im}(H^*F_k \to H^*\HHom_\integer(T(C^*),C^*))\right) & \\
\stackrel{\cong}{\to} \lim_{\leftarrow k} \left( H^*\HHom_\integer(T(C^*),S^*) / {\rm Im}(H^*\overline{F}_k \to  H^*\HHom_\integer(T (C^*) \right. & \left.  ,  S^*))\right) \label{iso completions}
\end{align}
but as the cohomology groups $H^*\HHom_\integer(T(C^*),C^*)$ and $H^*\HHom_\integer(T(C^*),S^*)$ are both graded groups, and in each degree they are finitely generated (as we know that $H^*\HHom_\integer(T(C^*),C^*)$ is isomorphic to the homology of $\LL M^{-TM}$), then we have that (\ref{iso completions}) implies that there is $\Phi$ induce an isomorphism at the level of the cohomologies
$$\Phi : H^*\HHom_\integer(T(C^*),C^*) \stackrel{\cong}{\to}  H^*\HHom_\integer(T(C^*),S^*).$$
This finishes the proof of Lemma \ref{lemma first q.i.}.
\end{proof}

Let us prove that the second map is a quasi-isomorphism

\begin{lemma}
The map
\begin{equation*}
 F : \HHom_{\integer}(T(S^*),S^*) \to
\HHom_{\integer}(T(C^*),S^*), \
   \phi \mapsto  \phi \circ \overline{f}
\end{equation*}
is a quasi-isomorphism.
\end{lemma}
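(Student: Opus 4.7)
The plan is to mirror the proof of Lemma \ref{lemma first q.i.} essentially verbatim, since the two statements have completely parallel structure (with the roles of $C^*$ and $S^*$ partly swapped). I will filter both sides of the map $F$ by the number of tensor factors in the source, setting
$$
G_k = \HHom_\integer\!\left(\bigoplus_{i \geq k} S^{*\otimes i}[i],\, S^*\right), \qquad \overline{F}_k = \HHom_\integer\!\left(\bigoplus_{i \geq k} C^{*\otimes i}[i],\, S^*\right).
$$
The key observation making this filtration work is that, in the explicit description of the differential given earlier in this appendix, only the ``bar part'' $\delta$ couples different tensor degrees, and $\delta$ strictly increases filtration degree (since $\delta f$ evaluated at a $k$-fold input feeds $f$ inputs with at most $k-1$ slots), while $d_s$ and $d_t$ preserve it. The map $F$, being precomposition with $\overline{f}$, is clearly compatible with both filtrations.

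At the level of zeroth pages the $\delta$-contribution is killed, and the induced map on the summand of tensor degree $k$ becomes precomposition with $f^{\otimes k}: C^{*\otimes k}[k] \to S^{*\otimes k}[k]$. Since $f: C^* \to S^*$ has already been shown to be a chain homotopy equivalence of $\integer$-complexes, and since tensor products of chain homotopies yield chain homotopies, $f^{\otimes k}$ is a chain homotopy equivalence as well. Hence precomposition with $f^{\otimes k}$ is a chain homotopy equivalence
$$
\HHom_\integer(S^{*\otimes k}[k],\, S^*) \longrightarrow \HHom_\integer(C^{*\otimes k}[k],\, S^*),
$$
so $F$ is a quasi-isomorphism on the $E_0$ pages and thus an isomorphism on the $E_1$ pages.

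To pass from an isomorphism of $E_1$ pages to a quasi-isomorphism of the totalized complexes, I will invoke the same convergence machinery as in Lemma \ref{lemma first q.i.}: the filtrations $G_\bullet$ and $\overline{F}_\bullet$ are Hausdorff and weakly convergent, so Theorem 3.9 of \citep{McCleary} gives an isomorphism of the corresponding completions. Combined with the finite generation of $H^*\HHom_\integer(T(C^*), S^*)$ in each degree --- which follows from the topological identifications of earlier sections together with Lemma \ref{lemma first q.i.} --- this upgrades the isomorphism of completions to an isomorphism of cohomologies.

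The step I expect to require the most care is the verification that $f^{\otimes k}$ is a homotopy equivalence despite $C^*$ not being free over $\integer$: one must check that the two-sided chain homotopy inverse of $f$ produced by Lemma \ref{homotopy lemma} can be tensored with itself $k$ times level-wise to produce a homotopy inverse of $f^{\otimes k}$, together with the attendant sign and shift bookkeeping coming from the $[k]$-shifts. Apart from this, the argument is essentially formal and transports the reasoning of the previous lemma with the direction of $f$ reversed (precomposition instead of postcomposition).
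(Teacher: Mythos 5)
Your proposal is correct, but it follows a route that differs from the paper's in the choice of filtration. You reuse the single filtration by tensor degree of the source, $\overline{F}_k = \HHom_\integer(\bigoplus_{i\geq k} C^{*\otimes i}[i], S^*)$, exactly as in the first lemma; on the $E_0$ page this leaves the full $\HHom$-complex differential (both $d_s$ and $d_t$ survive, only $\delta$ is killed), and you conclude by observing that precomposition with $f^{\otimes k}$ is a chain homotopy equivalence of $\HHom$-complexes because $f$ is one --- the essential input being that homotopy equivalences (unlike mere quasi-isomorphisms between non-free complexes) are preserved by $\otimes^k$ and by $\HHom_\integer(-,S^*)$. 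The paper instead introduces a double filtration $\overline{P}^{j,k} = \HHom_\integer(\bigoplus_{i\geq k} C^{*\otimes i}, S^{\geq j})$ and totalizes it, so that on its $E_0$ page the target differential $d_t$ is also split off and only precomposition with the source differential remains; its $E_1$ page is then $\bigoplus_r H^*((C^{*\otimes r})^\vee)\otimes S^*$ (using that $S^*$ is degreewise finitely generated free), and the comparison reduces to the dualized maps $(S^{*\otimes r})^\vee \to (C^{*\otimes r})^\vee$ being quasi-isomorphisms. Note that both arguments ultimately rest on the same fact you flag as delicate --- that $f^{\otimes k}$ is a homotopy equivalence, not just a quasi-isomorphism --- since the paper's dualization step would otherwise fail for the non-free complex $C^{*\otimes r}$; your version makes this dependence explicit, while the paper's version buys a simpler $E_1$ page at the cost of a two-index filtration. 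The concluding convergence step (Hausdorff/weakly convergent filtrations plus degreewise finite generation of the cohomology) is handled the same way in both.
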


\begin{proof}
Consider the following double filtrations
 \begin{align*}P^{j.k} = \HHom_\integer( \bigoplus_{i \geq k} S^{*\otimes i} , S^{\geq j})\\
 \overline{P}^{j.k} = \HHom_\integer( \bigoplus_{i \geq k} C^{*\otimes i} , S^{\geq j})
 \end{align*}
  and note that the filtrations are compatible with the map $F$, and that the differentials $\delta$ and $d_t=d_{S^*}$ raise the degree, namely we have that
\begin{align*}
 \delta : P^{j,k} \to P^{j,k+1},   & d_t: P^{j,k} \to P^{j+1,k},  \ \  \delta : \overline{P}^{j,k} \to \overline{P}^{j,k+1}  \\  \mbox{and}  \  & d_t: \overline{P}^{j,k} \to \overline{P}^{j+1,k}.
\end{align*}

Let us take the filtrations defined by these double filtrations
 $$Q^r= \sum_{k+j=r} P^{j,k} \ \ \ \ \ \overline{Q}^r= \sum_{k+j=r} \overline{P}^{j,k}$$
 noting that both $\delta$ and $d_t$ raise the degree $\delta, d_t: Q^r \to Q^{r+1}$.
 Therefore the associated spectral sequences $E_*$ and $\overline{E}_*$ are compatible via the map $F$
 $$F: E_* \to \overline{E}_*$$
 and on the zeroth pages of both spectral sequences we get the associated graded
 $$E_0 \cong \bigoplus_{r=0}^\infty \bigoplus_{k+j=r} \HHom_\integer( S^{*\otimes k}, S^j)$$
 $$\overline{E}_0 \cong \bigoplus_{r=0}^\infty \bigoplus_{k+j=r} \HHom_\integer( C^{*\otimes k}, S^j).$$

The zeroth differential $d^0$ on the group $\HHom_\integer( S^{* \otimes k}, S^j)$ becomes the differential obtained by pre-composing with the internal differential of the source $S^{*\otimes k}$
 $$(d^0\phi )(a_1|...|a_k)= (-1)^{|\phi|} \phi(d(a_0|...|a_k))$$
 (the same happens with the zeroth differential $\overline{d}^0$ of $\overline{E}_0$), and therefore we have that
 the first level of the spectral sequences become the sum of the cohomologies of the duals of $S^{\otimes k}$ and $C^{\otimes k}$ respectively tensored with $S^*$
 $$ E_1  \cong \bigoplus_{r=0}^\infty  H^*( (S^{* \otimes r})^\vee) \otimes S^*$$
 $$ \overline{E}_1  \cong \bigoplus_{r=0}^\infty  H^*( (C^{* \otimes r})^\vee) \otimes S^*$$
 as we know that $S^*$ is a finitely generated free $\integer$-module.

 The map $F:E_1 \to \overline{E}_1$ at the first level is clearly an isomorphism as the map $f: C^* \to S^*$ induces a quasi-isomorphism at the level of the tensor products $C^{* \otimes k} \stackrel{\simeq}{\to} S^{* \otimes k}$ and their duals $(S^{* \otimes k})^\vee \stackrel{\simeq}{\to} (C^{* \otimes k})^\vee$.

 We have now that $F$ becomes an isomorphism at the first level of the spectral sequences.
 We also have that the filtration given by $Q^r$ is Hausdorff; $\cap_r Q^r=\{0\}$, therefore weakly-convergent, and applying the same argument as in the previous lemma, namely that the cohomologies
 $$H^*\HHom_\integer(T(S^*),S^*) \ \ \ \ \mbox{and} \ \ \ \ H^*\HHom_\integer(T(C^*),S^*)$$
 are both graded rings, we have that the isomorphism of the spectral sequences implies that the cohomologies
 $$H^*\HHom_\integer(T(S^*),S^*) \ \ \ \ \mbox{and} \ \ \ \ H^*\HHom_\integer(T(C^*),S^*)$$
 are isomorphic. Therefore  $F$ induces a q.i. on the complexes
 $$\HHom_\integer(T(S^*),S^*) \stackrel{\simeq}{\to} \HHom_\integer(T(C^*),S^*).$$
\end{proof}
This ends the proof of Theorem \ref{same HH}.
\end{proof}

Using the notation of (\ref{definition Aa_h}) we see that Theorem \ref{same HH} implies that there are canonical isomorphisms
\begin{align*}
H^*\HHom_{C^{*e}}(B(C^*),C^*_g) \cong &  H^*\HHom_{S^{*e}}(B(S^*),S^*_g) = \Ext_{S^{*e}}(S^*,S^*_g)\\
H^*\HHom_{C^{*e}}(B(C^*),C^*\#G) \cong &  H^*\HHom_{S^{*e}}(B(S^*),S^*\#G)= \Ext_{S^{*e}}(S^*,S^*\#G) \nonumber
\end{align*}
which induces a canonical isomorphism between the cohomologies
\begin{align*}
H^*\HHom_{\integer G}(\overline{B}(\integer G),\HHom_{C^{*e}}(B(C^*),C^* \# G)) \cong & \\ H^*\HHom_{\integer G}(\overline{B}(\integer G),\HHom_{S^{*e}} & (B(S^*),S^*\#G)).
\end{align*}
Therefore we can conclude that
\begin{cor}
There is a canonical ring isomorphism
$$HH^*(C^* \#G, C^* \#G) \cong HH^*(S^* \#G, S^* \#G)$$
which can also be seen as a ring isomorphism
$$HH^*(C^* \#G, C^* \#G) \cong \Ext_{S^{*}\#G^e}(S^* \# G, S^* \# G).$$
\end{cor}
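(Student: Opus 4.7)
The plan is to deduce both displayed isomorphisms from Theorem \ref{theorem decomposition HH} combined with the canonical cohomology isomorphism stated immediately before the corollary, and then to address two residual points: multiplicativity of the resulting isomorphism, and the fact that for $S^*$ the bar-type resolution is genuinely cofibrant (not merely relatively so), which identifies the Hochschild cohomology with honest Ext-groups.

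First I would apply Theorem \ref{theorem decomposition HH} to the $G$-module dg-rings $C^*$ and $S^*$ (note that the fixed simplicial decomposition was chosen so that $G$ acts simplicially, so the map $f:C^*\to S^*$ is $G$-equivariant and multiplicative). This yields
\[
HH^*(C^*\#G,C^*\#G)\;\cong\;H^*\HHom_{\integer G}\!\bigl(\lineB_G(\integer),\,\HHom_{C^{*e}}(B(C^*),C^*\#G)\bigr)
\]
and the analogous expression for $S^*$. The boxed canonical isomorphism preceding the corollary then immediately furnishes an isomorphism of graded groups between $HH^*(C^*\#G,C^*\#G)$ and $HH^*(S^*\#G,S^*\#G)$.

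Next I would upgrade this to a ring isomorphism. The product on each side is the one described in Section \ref{subsubsection B(G)} and Section \ref{subsubsection Hom_A2}, explicitly spelled out by (\ref{product in HomGHomA2}); it depends only on the dg-algebra structure of $\Aa\#G$, its $\Aa^e$-bimodule structure, and the $G$-action. The comparison isomorphism factors as post-composition with $f\#1:C^*\#G\to S^*\#G$ followed by the quasi-inverse to pre-composition with the dg-algebra map $\overline{f}:B(C^*)\to B(S^*)$. Since both $f$ and $\overline{f}$ are morphisms of $G$-equivariant dg-algebras, each factor commutes with the shuffle-type product (\ref{product in HomGHomA2}), so the resulting isomorphism on $HH^*$ is multiplicative; the $\integer G$-equivariance is automatic because $f$ is $G$-equivariant.

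Finally, for the Ext-interpretation I would use that the chosen simplicial decomposition of $M$ is finite, so $S^*$ is a finitely generated free $\integer$-module in each degree. Hence Lemma \ref{lemma bar is cofibrant} applies and $B(S^*)\to S^*$ is a genuine cofibrant replacement as $S^{*e}$-module. The construction of Section \ref{section HH for dg-rings} then shows that $B(S^*)\otimes_\integer \lineB(\integer G)\to S^*\#G$ is a cofibrant replacement as an $S^*\#G^e$-module, so that
\[
HH^*(S^*\#G,S^*\#G)\;\cong\;\Ext^*_{S^*\#G^e}(S^*\#G,\,S^*\#G),
\]
and combining this with the ring isomorphism above yields the second claim. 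I expect the main obstacle to be the multiplicativity verification: one must track through the explicit formula (\ref{product in HomGHomA2}) to confirm that although $f$ possesses no multiplicative homotopy inverse, the zig-zag built from $f$ and $\overline{f}$ nonetheless induces a ring map in cohomology; once this is checked the rest of the argument is formal.
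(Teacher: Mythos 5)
Your proposal is correct and follows essentially the same route as the paper: the paper likewise deduces the corollary by feeding the coefficient-wise isomorphisms $H^*\HHom_{C^{*e}}(B(C^*),C^*\#G)\cong H^*\HHom_{S^{*e}}(B(S^*),S^*\#G)$ (obtained from Theorem \ref{same HH} applied with coefficients $C^*_g$, $S^*_g$) into the decomposition $H^*\HHom_{\integer G}(\overline{B}(\integer G),\HHom_{(-)^{e}}(B(-),(-)\#G))$ of Theorem \ref{theorem decomposition HH}, and uses the $\integer$-freeness of $S^*$ for the Ext-interpretation. The only difference is presentational: you spell out the multiplicativity of the zig-zag through $\HHom_{C^{*e}}(B(C^*),S^*\#G)$, which the paper leaves implicit in its appeal to Theorem \ref{same HH} being an isomorphism of graded algebras.
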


\bibliographystyle{plainnat}
\bibliography{Hochschild}

\affiliationone{
  A. \'Angel, E. Backelin and B. Uribe\\
   Departamento de Matem\'{a}ticas\\ Universidad de los Andes\\
Carrera 1 N. 18A - 10\\ Bogot\'a \\
COLOMBIA
   \email{ja.angel908@uniandes.edu.co\\
   erbackel@uniandes.edu.co\\
   buribe@uniandes.edu.co}}

\end{document}